\def\n{{\boldsymbol n }}
\def\x{{\boldsymbol x }}
\def\a{{\boldsymbol a }}
\newcommand{\dx}{\,\mathrm{d}\boldsymbol{x}}
\newtheorem{theorem}{Theorem}
\newtheorem{lemma}[theorem]{Lemma}
\newtheorem{proposition}[theorem]{Proposition}
\newtheorem{corollary}[theorem]{Corollary}
\newtheorem{remark}[theorem]{Remark}
\newtheorem{definition}[theorem]{Definition}
\title{Finite element approximation and very weak solution existence in a two-dimensional, degenerate Keller-Segel model}
\author{Juan Vicente Gutiérrez-Santacreu\thanks{Dpto. de Matemática Aplicada I, E. T. S. I. Informática, Universidad de Sevilla. Avda. Reina Mercedes, s/n. E-41012 Sevilla, Spain.  E-mail: {\tt juanvi@us.es}. JVGS was partially supported by Grant I+D+I PID2023-149182NB-I00 funded by 
MICIU/AEI/10.13039/501100011033 and, ERDF/EU.
}}
\begin{document}
\maketitle

{\bf 2010 Mathematics Subject Classification.}  35K20, 35K55, 65N30, 92C17. 

{\bf Keywords.} Degenerate Keller--Segel equations; very weak solutions; finite-element approximation; convergence analysis.

\begin{abstract}

This paper is devoted to the design and analysis of a numerical algorithm for approximating solutions of a degenerate cross-diffusion system, which models particular instances of taxis-type migration processes under local sensing mechanisms. The degeneracy leads to solutions that are very weak due to the low regularity themselves. Specifically, the solutions satisfy pointwise bounds (such as positivity and the maximum principle), integrability (such as mass conservation), and dual a priori estimates.

The proposed numerical scheme combines a finite element spatial discretization with Euler time stepping. The discrete solutions preserve the above-mentioned properties at the discrete level, enabling the derivation of compactness arguments and the convergence (up to a subsequence) of the numerical solutions to a very weak solution of the continuous problem on  two-dimensional polygonal domains.
\end{abstract}

\section{Introduction}
We consider a particular formulation of the Keller--Segel system modeling taxis-driven migration under local sensing mechanisms \cite{Keller_Segel_1971} contrasting with the commonly adopted gradient sensing framework.  The chemical signal is governed by a degenerate parabolic equation, where the nonlinear diffusion is modeled by a motility function $\Phi$, which stands for the sensitivity of the organisms to environmental cues. The organism density evolves according to a parabolic equation as well, where the organisms consume the chemical signal rather than producing it. 

Let $\Omega \subset \mathds{R}^2$ be a bounded domain with Lipschitz boundary $\partial \Omega$ and outward unit normal vector $\n$. For a fixed positive time $T > 0$, we define the spatiotemporal domain as $\Omega_T := \Omega \times (0, T]$ and $\partial\Omega_T=\partial\Omega\times(0,T]$. The system under consideration is governed by the following coupled partial differential equations:
\begin{equation}\label{model:KS}
\left\{
\begin{array}{rccl}
\partial_t u - \Delta (\Phi(v) u) &= 0 & \text{in} & \Omega_T, \\
\partial_t v - \Delta v + uv &= 0 & \text{in} & \Omega_T, \\
\partial_\n (\Phi(v) u) = \partial_\n v &= 0 & \text{on} & \partial \Omega_T, \\
u(0) = u_0, \quad v(0) &= v_0 & \text{in} & \Omega,
\end{array}
\right.
\end{equation}
where  $u(\x,t)$ and $v(\x,t)$ represent the cell density and the chemical concentration at spatial position $\x \in \bar\Omega$ and time $t \in [0, T]$, respectively. The motility function $\Phi: [0, \infty) \to (0, \infty)$ is a continuous function that models the interaction between the cell population and the chemical concentration, with the degenerate behavior at the origin.

The mathematical properties of model \eqref{model:KS} are closely linked to the conditions imposed on the motility function $\Phi$. For instance, mobility functions \cite{Tao_Winkler_2017} satisfying $ \Phi \in C^3([0, \infty))$, together with structural conditions such as
$$
k_\Phi\le  \Phi(s) \leq K_\Phi \quad \text{and} \quad |\Phi'(s)| \leq K_{\Phi'} \quad \text{for all } s > 0,
$$
where $ k_\Phi $, $ K_\Phi $, and $K_{\Phi'} $ are positive constants and under the additional assumption that $\Omega$ be convex, global-in-time classical solutions with uniform boundedness are known to exist in the two-dimensional case. In dimension three, the existence of solutions has so far been established only in the weak sense, but if $\|u_0\|_{L^2(\Omega)}+\|v_0\|_{W^{1,4}(\Omega)}$ is small enough such a weak solution is classical. Further, if only  $\|\Phi'\|_{L^\infty(0,\infty)} \|u_0\|_{L^1(\Omega)}$ is suitably small, the weak solution becomes eventually classical and bounded. With $\Phi\in C^3([0,\infty))$ such that $\Phi(s)>0$ on $[0,\infty)$, classical solutions are as well guaranteed in two dimensions, whereas three-dimensional solutions are merely weak \cite{Li_Winkler_2024}. Classical solutions globally in time have been as well found admitting degeneracy; more precisely,  $\phi\in C^1([0,\infty))\cap C^3((0,\infty))$ is such that $\phi(0)=0$, $\phi'(0)>0$,  and $\Phi(s)>0$ for all $s>0$ \cite{Winkler_2024}.      

It is reasonably to believe that if the regularity of $\Phi$ is weakened, as $\Phi \in C^0([0,\infty))$ with  $\Phi(s)>0$ for all $s\ge0$,  the concept of classical solvability is not reached leading to global existence of certain very weak solutions for any initial data $(u_0, v_0)\in(C^0(\bar\Omega))^*\times L^\infty(\Omega)$ likely lacking strong regularity properties \cite{Li_Winkler_2023}.  For motility functions $\Phi$ that are continuous and degenerate in the sense of \cite{Winkler_2023}, i.e, $\Phi\in C^0([0,\infty))$ with $\Phi(s)>0$ for all $s>0$ and such that there is a constant $\alpha>0$ satisfying 
$$
\liminf_{s\to 0} \frac{\Phi(s)}{s^\alpha}>0, 
$$
very weak solutions exist in this setting.

In this work, we are interested in this last framework, where we propose a numerical algorithm relied on a first-order finite element method for space, while time is discretized by an implicit Euler approximation; see section \ref{sec:FEM}.  We present an existence result on polygonal domains proving that the sequence of discrete solutions convergence towards a very weak solution.  The main challenges in analyzing numerically \eqref{model:KS} arise from the degeneracy of the nonlinear diffusion term $\Phi(v)$, which requires careful treatment in the numerical analysis; particularly, compactness and passage to the limit in sections \ref{sec:compactness} and \ref{sec:limit}, respectively. Lower and upper bounds associated with positivity, nonnegativity and the maximum principle are derived in section 	\ref{sec:nodal_bnds} for the discrete solutions thanks to the use of weakly acute meshes. Moreover, as customary, when testing against functions that do not belong to the test space, mass lumping is applied in \eqref{Alg} to the time terms to face a priori bounds in section \ref{sec:a_priori_bnds}.    

Extensions to other numerical techniques \cite{Huang_Shen_2021, Acosta-Soba_Guillen-Gonzalez_Rodriguez-Galvan_2023, Badia_Bonilla_GS_2023}, developed for the classical Keller--Segel equations, which are known to produce numerical solutions maintaining all the essential properties from the continuous analysis, are not readily adaptable to the numerical analysis presented in this work. 

\section{The finite element approximation}\label{sec:FEM}
\subsection{Space setting}
We assume that $\Omega \subset \mathds{R}^2$ is a polygonal domain, and we introduce a quasi-uniform family $\{\Sigma_h\}_{h>0}$ of triangulations of $\bar\Omega$ into disjoint closed simplices, i.e, $\Omega = \bigcup_{\sigma \in \Sigma^h} \sigma$, with local mesh size $h_\sigma := \text{diam}(\sigma)$ and global mesh size $h := \max_{\sigma \in \Sigma_h} h_\sigma$.  In addition, it is assumed that $\Sigma_h$ is a (weakly) acute partitioning; that is, the sum of opposite angles relative to any side does not exceed $\pi$.

Let $\{\boldsymbol{a}_i\}_{i=0}^I$ be the coordinates of the nodes of $\Sigma_h$.  The finite element space is constructed using continuous, piecewise linear functions defined on $\Sigma_h$:
$$
X_h := \left\{ \chi \in C^0(\bar\Omega) : \chi|_\sigma \text{ is linear for all } \sigma \in \Sigma_h \right\} \subset H^1(\Omega),
$$
with standard basis functions $\{\varphi_{\a_i}\}_{i=1}^I$; that is, $\varphi_{\a_i} \in X_h$ and $\varphi_{\a_i}(\a_j) = \delta_{ij}$ for all $i, j = 0,\cdots, I$. Further, 
$$
X_h^{\int=0}=\{\chi_h\in X_h\,:\, \int_\Omega \chi_h \,{\rm d}\x=0\}.
$$
Some local relations between the Lebesgue and Sobolev norms on $X_h$ are set up: 
\begin{enumerate}[(a)]
\item There is a constant $C_{\rm inv}>0$, independent of $h$, such that, for all $p\in[1,\infty]$, 
\begin{equation}\label{inv_ineq_Wnp->Wmq}
\|\chi_h\|_{W^{n,p}(\sigma)}\le C_{\rm inv} h^{m-n+\min\{0, \frac{2}{p}-\frac{2}{q}\}} \|\chi_h\|_{W^{m,q}(\sigma)}\quad\mbox{for all}\quad \chi_h\in X_h.
\end{equation}

See \cite[lm 4.5.3]{Brenner_Scott_2008} and \cite[lm 1.138]{Ern_Guermond_2004} for a proof that does not require the quasi-uniformity of $\Sigma_h$. However, under this assumption, the constant $C_{\rm err}$ becomes independent of $h$.
\end{enumerate}

We consider $\mathcal{I}_h : C(\bar\Omega) \to X_h$, the nodal interpolation operator, such that $\mathcal{I}_h \chi(\a_i) = \chi(\a_i)$ for $i = 0,\cdots, I$. For each $i\in I$, let $I_{\a_i}=\{ i\in I\, :\, \a_i\in\textrm{supp }\varphi_{\a_i}\}$.  

The following properties of $\mathcal{I}_h$ will be used throughout this paper: 

\begin{enumerate}
\item[(b)] There is a constant $C_{\rm err}>0$, independent of $h$, such that, for all $\sigma \in \Sigma_h$ and  $\chi\in W^{2,\infty}(\sigma)$,  
\begin{equation}\label{err_Linf-W2inf:I_h}
\| \chi - \mathcal{I}_h[\chi] \|_{L^\infty(\sigma)} \le C_{\rm err } h  \|\nabla^2\chi\|_{L^\infty(\sigma)}
\end{equation}
and 
\begin{equation}\label{err_L1-W21:I_h}
\|\chi-\mathcal{I}_h[\chi]\|_{L^1(\sigma)}\le C_{\rm err} h^2\, \|\nabla^2\chi\|_{L^1(\sigma)},
\end{equation}
See \cite[Th. 4.4.4]{Brenner_Scott_2008} and \cite[Th. 1.103]{Ern_Guermond_2004} for a proof that as before is not relied on the quasi-uniformity of $\Sigma_h$. With this, $C_{\rm err}$ is independent of $h$.   
\end{enumerate}
A discrete semi-inner product on $C(\bar\Omega)$ is defined by
$$
(\chi, \bar\chi)_{h} :=\int_\Omega \mathcal{I}_h[\bar \chi_h \chi] \,{\rm d}\x = \sum_{i\in I} \chi_h(\a_i) \bar\chi_h(\a_i) \|\varphi_{\a_i}\|_{L^1(\Omega)}.
$$
We also introduce the norm $ \|\chi\|_h := \left[ (\chi, \chi)_h \right]^{1/2}$  for all $\chi \in C^0(\bar\Omega)$, which satisfies the following equivalence with $\|\cdot\|_{L^2(\Omega)}$. 
\begin{enumerate}
\item[(c)] There is a constant $C_{\rm eq}>0$, independent of $h$, such that, for all $\chi_h\in X_h$,  
\begin{equation}\label{equiv:L2_and_L2h}
\|\chi_h\|_{L^2(\Omega)}\le \|\chi_h\|_h\le C_{\rm eq}\|\chi_h\|_{L^2(\Omega)}.
\end{equation}
See \cite[Prop. 2.3]{GG_GS_2019} for a proof. 
\end{enumerate}
Some properties concerning commutator errors for $\mathcal{I}_h$ will be needed as well.
\begin{enumerate}
\item[(d)] There is a constant $C_{\rm com}>0$, independent of $h$, such that 
\begin{equation}\label{com_err_L1->L2-H1:I_h}
|(\chi_h, \bar \chi_h)_h-(\chi_h, \bar\chi_h)|\le C_{\rm com} h  \|\chi_h\|_{L^2(\Omega)} \|\nabla\bar \chi\|_{L^2(\Omega)}.
\end{equation}
and
\begin{equation}\label{com_err_L1->L1-W1inf:I_h}
\|\chi_h \bar \chi_h-\mathcal{I}_h[\chi_h\bar\chi_h]\|_{L^1(\sigma)}\le C_{\rm err} h\|\chi_h\|_{L^1(\sigma)}  \|\nabla\bar\chi_h\|_{L^\infty(\sigma)}. 
\end{equation}

See \cite{GG_GS_2019} for a proof of \eqref{com_err_L1->L2-H1:I_h}. For \eqref{com_err_L1->L1-W1inf:I_h}, observe that 
$$
\nabla^2(\chi_h \bar\chi_h)=2\sum_{i,j=1}^2 \partial_i \chi_h \partial_j \bar\chi_h.
$$
Using \eqref{err_L1-W21:I_h} and \eqref{inv_ineq_Wnp->Wmq} yields
$$
\begin{array}{rcl}
\|\mathcal{I}_h(\chi_h\bar\chi_h)-\chi_h \bar\chi_h\|_{L^1(\sigma)}&\le&\displaystyle 2 C_{\rm err} h^2 \int_\sigma |\nabla\chi_h| |\nabla\bar\chi_h |
\\
&\le&\displaystyle 
2 C_{\rm app} h^2 \|\nabla\chi\|_{L^1(\sigma)} \|\nabla\bar\chi_h\|_{L^\infty(\sigma)} 
\\
&\le&\displaystyle
2 C_{\rm app} C_{\rm inv} h \|\chi\|_{L^1(\sigma)} \|\nabla\bar\chi_h\|_{L^\infty(\sigma)}.
\end{array}
$$
\end{enumerate}

\subsection{Algorithm}

Let $(u_0, v_0)\in L^1(\Omega)\cap (H^1(\Omega))'\times L^\infty(\Omega)$ with $u_0\ge0$ and $v_0>0$ a. e. on $\Omega$. We consider $(u_{0h}, v_{0h})\in X_h^2$ to be approximations of $(u_0, v_0)$ constructed through an averaged interpolation operator $ \mathcal{Q}_h: L^1(\Omega)\to X_h$ defined by 
\begin{equation}\label{def:Q_h}
(\mathcal{Q}_h \chi, \chi_h)_h=(\chi, \chi_h)\quad\mbox{for all}\quad \chi_h\in X_h. 
\end{equation}

\begin{proposition} The following properties of $\mathcal{Q}_h$ are satisfied:
\begin{equation}\label{post-nonneg:Q_h}
\mathcal{Q}_h\chi >0\quad\mbox{or}\quad\ge0\quad\mbox{ if }\quad \chi >0\quad\mbox{or}\quad\ge0.
\end{equation}
\begin{equation}\label{sta_Lp:Q_h}
\|\mathcal{Q}_h\chi\|_{L^p(\Omega)}\le \|\chi\|_{L^p(\Omega)}\quad\mbox{ for } p=1, 2, \infty,
\end{equation}
and
\begin{equation}\label{sta_H1':Q_h}
\|\mathcal{Q}_h\chi\|_{(H^1(\Omega))'}\le C_{\rm sta} \|\chi\|_{(H^1(\Omega))'}.
\end{equation}

\end{proposition}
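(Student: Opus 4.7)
\emph{Proof plan.} First I would extract the explicit nodal representation of $\mathcal{Q}_h\chi$. Testing \eqref{def:Q_h} against $\chi_h = \varphi_{\a_i}$ and exploiting that $(\cdot,\cdot)_h$ is diagonal on the nodal basis (since $\varphi_{\a_i}(\a_j)=\delta_{ij}$) yields
\[
(\mathcal{Q}_h\chi)(\a_i) \;=\; \frac{(\chi,\varphi_{\a_i})}{\|\varphi_{\a_i}\|_{L^1(\Omega)}},
\]
so each nodal value is a weighted average of $\chi$ against the nonnegative weight $\varphi_{\a_i}$. From this \eqref{post-nonneg:Q_h} is immediate. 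The $L^\infty$-case in \eqref{sta_Lp:Q_h} follows because $\mathcal{Q}_h\chi=\sum_i(\mathcal{Q}_h\chi)(\a_i)\varphi_{\a_i}$ is a convex combination of weighted nodal averages of $\chi$ (using the partition of unity $\sum_i\varphi_{\a_i}\equiv 1$). The $L^1$-case is read off from the nodal formula:
\[
\|\mathcal{Q}_h\chi\|_{L^1(\Omega)} \;\le\; \sum_i|(\mathcal{Q}_h\chi)(\a_i)|\,\|\varphi_{\a_i}\|_{L^1(\Omega)} \;\le\; \sum_i\int_\Omega |\chi|\varphi_{\a_i}\dx \;=\; \|\chi\|_{L^1(\Omega)}.
\]
For $p=2$, I would test \eqref{def:Q_h} against $\chi_h=\mathcal{Q}_h\chi$; Cauchy--Schwarz combined with the equivalence \eqref{equiv:L2_and_L2h} gives $\|\mathcal{Q}_h\chi\|_h^2=(\chi,\mathcal{Q}_h\chi)\le\|\chi\|_{L^2(\Omega)}\|\mathcal{Q}_h\chi\|_h$, whence $\|\mathcal{Q}_h\chi\|_{L^2(\Omega)}\le\|\chi\|_{L^2(\Omega)}$.

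For the dual-norm bound \eqref{sta_H1':Q_h}, I would introduce an $H^1$-stable interpolant $R_h\colon H^1(\Omega)\to X_h$ (of Scott--Zhang type) satisfying $\|R_h\eta\|_{H^1(\Omega)}\le C\|\eta\|_{H^1(\Omega)}$ and $\|\eta-R_h\eta\|_{L^2(\Omega)}\le Ch\|\eta\|_{H^1(\Omega)}$. For any $\eta\in H^1(\Omega)$, I would decompose
\[
(\mathcal{Q}_h\chi,\eta) \;=\; (\mathcal{Q}_h\chi,\eta-R_h\eta) \;+\; \bigl[(\mathcal{Q}_h\chi,R_h\eta)-(\mathcal{Q}_h\chi,R_h\eta)_h\bigr] \;+\; (\chi,R_h\eta),
\]
where \eqref{def:Q_h} has been applied to the last summand. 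That last term is bounded by $C\|\chi\|_{(H^1(\Omega))'}\|\eta\|_{H^1(\Omega)}$; the first is handled by Cauchy--Schwarz and the $L^2$-error of $R_h$; the second by the commutator estimate \eqref{com_err_L1->L2-H1:I_h}. Both error contributions produce a common factor of the form $Ch\|\mathcal{Q}_h\chi\|_{L^2(\Omega)}\|\eta\|_{H^1(\Omega)}$.

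The closing step, and the main obstacle, is to bound $h\|\mathcal{Q}_h\chi\|_{L^2(\Omega)}$ by $\|\chi\|_{(H^1(\Omega))'}$ without presuming \eqref{sta_H1':Q_h} itself; a naive use of the inverse inequality $\|\mathcal{Q}_h\chi\|_{L^2(\Omega)}\le Ch^{-1}\|\mathcal{Q}_h\chi\|_{(H^1(\Omega))'}$ would leave the dual norm of $\mathcal{Q}_h\chi$ on both sides. The fix is to test \eqref{def:Q_h} a second time with $\chi_h=\mathcal{Q}_h\chi$ and invoke the inverse inequality \eqref{inv_ineq_Wnp->Wmq} in the form $\|\mathcal{Q}_h\chi\|_{H^1(\Omega)}\le Ch^{-1}\|\mathcal{Q}_h\chi\|_{L^2(\Omega)}\le Ch^{-1}\|\mathcal{Q}_h\chi\|_h$ on the right-hand side:
\[
\|\mathcal{Q}_h\chi\|_h^2 \;=\; (\chi,\mathcal{Q}_h\chi) \;\le\; \|\chi\|_{(H^1(\Omega))'}\|\mathcal{Q}_h\chi\|_{H^1(\Omega)} \;\le\; Ch^{-1}\|\chi\|_{(H^1(\Omega))'}\|\mathcal{Q}_h\chi\|_h.
\]
Hence $h\|\mathcal{Q}_h\chi\|_{L^2(\Omega)}\le C\|\chi\|_{(H^1(\Omega))'}$ via \eqref{equiv:L2_and_L2h}. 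Substituting back into the decomposition and taking the supremum over $\eta\in H^1(\Omega)$ concludes \eqref{sta_H1':Q_h}.
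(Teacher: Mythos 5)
Your nodal representation, the positivity, and the $L^1$, $L^2$, $L^\infty$ bounds coincide with the paper's argument (the paper dismisses $p=2$ as trivial; your Cauchy--Schwarz step $\|\mathcal{Q}_h\chi\|_h^2=(\chi,\mathcal{Q}_h\chi)$ plus \eqref{equiv:L2_and_L2h} is a fine way to make it explicit). The genuine difference is in the dual estimate \eqref{sta_H1':Q_h}. The paper first proves $H^1$-stability of $\mathcal{Q}_h$, namely \eqref{stab_Qh:dual_H1}, by comparing $\mathcal{Q}_h$ with the $L^2$-projection $\mathcal{P}_h$ (whose $H^1$-stability is quoted from Ern--Guermond): the difference $\mathcal{P}_h\chi-\mathcal{Q}_h\chi$ is controlled in $\|\cdot\|_h$ by the commutator bound \eqref{com_err_L1->L2-H1:I_h}, an inverse inequality upgrades this to a gradient bound, and then the dual estimate follows from the self-adjointness identity $(\mathcal{Q}_h\chi,\bar\chi)=(\chi,\mathcal{Q}_h\bar\chi)$ together with \eqref{stab_Qh:dual_H1}. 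You instead attack the dual norm directly: for $\eta\in H^1(\Omega)$ you split $(\mathcal{Q}_h\chi,\eta)$ using a Scott--Zhang quasi-interpolant, bound the interpolation error and the mass-lumping commutator by $Ch\|\mathcal{Q}_h\chi\|_{L^2(\Omega)}\|\eta\|_{H^1(\Omega)}$, and close the loop with the separate estimate $h\|\mathcal{Q}_h\chi\|_{L^2(\Omega)}\le C\|\chi\|_{(H^1(\Omega))'}$ obtained from \eqref{def:Q_h} tested with $\mathcal{Q}_h\chi$ and the inverse inequality \eqref{inv_ineq_Wnp->Wmq}; this last step is exactly the right way to avoid the circularity you flag, and the whole argument is correct (the duality pairings make sense since $\mathcal{Q}_h\chi\in X_h$ and the test functions lie in $X_h\subset H^1(\Omega)$). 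What each route buys: the paper's detour produces the $H^1$-stability bound \eqref{stab_Qh:dual_H1} as a byproduct and leans on the known stability of $\mathcal{P}_h$, while yours bypasses $\mathcal{P}_h$ altogether at the price of invoking Scott--Zhang approximation (already part of the paper's toolbox via \eqref{err_W1p->W2p:SZh}) and an extra inverse-inequality step; both ultimately use quasi-uniformity through the global inverse estimate, so neither is weaker in hypotheses.
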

\begin{proof} Assertion \eqref{post-nonneg:Q_h} is a direct consequence of the definition. For assertion \eqref{sta_Lp:Q_h}, note that 
$$
\mathcal{Q}_h\chi=\sum_{i\in I}\frac{(\chi,\varphi_{\a_j})}{\|\varphi_{\a_i}\|_{L^1(\Omega)}} \varphi_{\a_i}.
$$
This implies $\|\mathcal{Q}_h\chi\|_{L^\infty(\Omega)}\le\|\chi\|_{L^\infty(\Omega)}$, since
$$
\frac{(\chi,\varphi_{\a_j})}{\|\varphi_{\a_i}\|_{L^1(\Omega)}}\le \|\chi\|_{L^\infty(\Omega)}.
$$
Additionally,   
$$
\|\mathcal{Q}_h\chi\|_{L^1(\Omega)}\le \sum_{i\in I} \frac{|(\chi, \varphi_{\a_i})|}{\|\varphi_{\a_i}\|_{L^1(\Omega)}} \|\varphi_{\a_i}\|_{L^1(\Omega)} = \sum_{i\in I} \int_\Omega |\chi| \varphi_{\a_i} {\rm d}\x=\|\chi\|_{L^1(\Omega)}. 
$$
The stability for $p=2$ is trivial. 

Assertion \eqref{sta_H1':Q_h} will now be demonstrated. In doing so, we need to prove that there is a constant $C_{\rm sta}>0$, independent of $h$, such that  
\begin{equation}\label{stab_Qh:dual_H1}
\|\mathcal{Q}_h\chi\|_{H^1(\Omega)}\le C_{\rm sta} \|\chi\|_{H^1(\Omega)}.
\end{equation}

Let $\mathcal{P}_h\chi\in X_h$ solve
\begin{equation}\label{def:P_h}
(\mathcal{P}_h \chi, \chi_h)_h=(\chi, \chi_h)\quad\mbox{for all}\quad \chi_h\in X_h, 
\end{equation}
which satisfies \cite[Lm 1.131]{Ern_Guermond_2004}, for all $\chi\in H^1(\Omega)$,  
\begin{equation}\label{sta_H1:P_h}
\|\mathcal{P}_h\chi\|_{H^1(\Omega)}\le C_{\rm sta} \|\chi\|_{H^1(\Omega)}.
\end{equation}
Next, we see, from \eqref{def:Q_h} and \eqref{def:P_h}, that   
$$
(\mathcal{P}_h\chi- \mathcal{Q}_h\chi, \chi_h)_h=(\mathcal{P}_h\chi, \chi_h)_h-(\mathcal{P}_h\chi, \chi_h).
$$
Taking $\chi_h= \mathcal{P}_h\chi - \mathcal{Q}_h\chi $ yields, on noting \eqref{com_err_L1->L2-H1:I_h}, \eqref{equiv:L2_and_L2h}, and \eqref{sta_H1:P_h},  that 
\begin{equation}\label{pro1:lab1}
\|\mathcal{P}_h\chi - \mathcal{Q}_h\chi\|_h \le C_{\rm com}C_{\rm sta} h \|\chi\|_{H^1(\Omega)} 
\end{equation}

In order to deal with the stability of $\mathcal{Q}_h$ in $H^1(\Omega)$, we use \eqref{pro1:lab1}, \eqref{inv_ineq_Wnp->Wmq}, and \eqref{sta_H1:P_h} to obtain      
$$
\begin{array}{rcl}
\|\nabla\mathcal{Q}_h\chi\|_{L^2(\Omega)}&\le& \|\nabla\mathcal{Q}_h\chi - \nabla\mathcal{P}_h\chi\|_{L^2(\Omega)} + \|\nabla\mathcal{P}_h\chi\|_{L^2(\Omega)}
\\
&\le& C_{\rm inv} h^{-1} \|\mathcal{Q}_h\chi - \mathcal{P}_h\chi\|_{L^2(\Omega)} + C_{\rm sta} \|\mathcal{P}_h\chi\|_{H^1(\Omega)}
\\
&\le& C_{\rm inv} C_{\rm com} C_{\rm sta} \|\chi\|  + C_{\rm sta} \|\chi\|_{H^1(\Omega)},  
\end{array}
$$
which combined with inequality \eqref{sta_Lp:Q_h} for $p=2$ leads to \eqref{sta_H1:P_h}. Now we focus on seeing that \eqref{sta_H1':Q_h} is true. To carry this out, we have, by \eqref{def:Q_h}  and \eqref{stab_Qh:dual_H1}, that
$$
\begin{array}{rcl}
\|\mathcal{Q}_h\chi\|_{(H^1(\Omega))'}&=&\displaystyle
\sup\frac{(\mathcal{Q}_h\chi,\bar\chi)}{\|\bar\chi\|_{H^1(\Omega)}}
\\
&\le&\displaystyle
\sup\frac{(\chi,\mathcal{Q}_h\bar\chi)}{\|\bar\chi\|_{H^1(\Omega)}}
\\
&\le& C_{\rm sta} \|\chi\|_{(H^1(\Omega))'}.
\end{array}
$$

\end{proof}

This construction of $\mathcal{Q}_h$ ensures the following properties:
\begin{equation}\label{prop:u_0h}
\left\{
\begin{array}{rcl}
\|u_{0h}\|_{L^1(\Omega)} &\le& \|u_0\|_{L^1(\Omega)}, \\[1ex]
\|u_{0h}\|_{(H^1(\Omega))'} &\le& C_{\mathrm{sta}} \|u_0\|_{(H^1(\Omega))'}, \\[1ex]
u_{0h}(\boldsymbol{x}) &\ge& 0 \quad \text{for all } \boldsymbol{x} \in \Omega,
\end{array}
\right.
\end{equation}
and, similarly,
\begin{equation}\label{prop:v_0h}
\|v_{0h}\|_{L^\infty(\Omega)} \le \|v_0\|_{L^\infty(\Omega)} \quad \text{and} \quad v_{0h}(\boldsymbol{x}) > 0 \quad \text{for all } \boldsymbol{x} \in \Omega.
\end{equation}
 
For $T>0$, the time discretization is performed by dividing the interval $[0,T]$ into uniform subintervals of size $k=T/N$ with $N\in\mathds{N}$. This gives a time sequence $\{\mathcal{T}_k\}_{k>0}$, where $\mathcal{T}_k=\bigcup_{\tau\in\mathcal{T}_h}\tau $ with $\tau=[t_n, t_{n+1}]$ for some $n\in\{0,\cdots, N-1\}$. The discrete algorithm is then formulated 
as: given $(u_{0h}, v_{0h})\in X_h^2$,  we want to know $(u^{n+1}_h, v^{n+1}_h)\in X_h^2$ such that, for all $(\bar u_h, \bar v_h)\in X_h^2$,  
\begin{equation}\label{Alg}
\left\{
\begin{array}{rcl}
(\delta_t u^{n+1}_h, \bar u_h)_h+(\nabla\mathcal{I}_h(\Phi(v^n_h) u^{n+1}_h), \nabla\bar u_h)&=&0,
\\
(\partial_t v^{n+1}_h, \bar v_h)_h+(\nabla v^{n+1}_h, \nabla\bar v_h)+(v^{n+1}_h u^{n+1}_h,\bar v_h)_h&=&0.
\end{array}
\right.
\end{equation}
It results in a decoupled system of linear equations for the discrete unknowns $u_h^{n+1}$ and $v_h^{n+1}$ at each time step. Therefore, existence follows from uniqueness. Indeed, suppose that there are two solution pairs $(u^{n+1}_h, v^{n+1}_h)$ and $(\tilde u^{n+1}_h, \tilde v^{n+1}_h)$ and compare both to find, on defining $w^{n+1}_h=u^{n+1}_h-\tilde u^{n+1}_h$, that  
$$
(\delta_t w^{n+1}_h, \bar u_h)_h+(\nabla\mathcal{I}_h(\Phi(v^n_h) w^{n+1}_h), \nabla\bar u_h)=0.
$$
Making use of the development to deduce \eqref{lm6:lab2} follows
$$
\|\psi^{n+1}_h\|^2_{H^1_h(\Omega)}+ 2 k(\Phi(v^n_h)w^{n+1}_h, w^{n+1}_h)_h\le 2 k \max_{x\in [0,\|v^n_h\|_{L^\infty(\Omega)}]} \Phi(x) \|\psi^{n+1}_h\|^2_{H^1_h(\Omega)}.
$$
Upon election of $2k \max_{x\in [0,\|v^{0h}\|_{L^\infty(\Omega)}]} \Phi(x)< 1$, we deduce that $\psi^{n+1}_h\equiv0$, which implies, in turn, that $u^{n+1}_h\equiv0$ from  \eqref{eq:psi_h}. 
\subsection{Main result}
We now state the concept of solvability for system \eqref{Alg}, as introduced in \cite{Winkler_2023}.  
\begin{definition}\label{def:very_weak_sol}
Let $T>0$ be fixed. For given $u_0 \in L^1(\Omega)$ and $v_0 \in L^\infty(\Omega)$ with $u_0\ge 0$ and $v_0>0$ a. e. on $\Omega$, a pair $(u, v)$ fulfiling 
\begin{align*}
u &\in L^\infty(0,T; L^1(\Omega))\quad \mbox{with}\quad u\ge0\mbox{ a. e. on }\Omega\times(0,T], \\
v &\in L^\infty(\Omega_T ) \cap L^2(0, T; H^1(\Omega))\quad \mbox{with}\quad v>0\mbox{ a. e. on }\Omega\times(0,T]
\end{align*}
is a global very weak solution of \eqref{model:KS} if 
\begin{equation} \label{def:weak_sol_u}
\int_0^T \int_\Omega -u \varphi_t \, \,{\rm d}\x\, {\rm d}t - \int_\Omega u_0 \varphi(\cdot, 0) \, \,{\rm d}\x = \int_0^T \int_\Omega \Phi(v) u  \Delta \varphi \, \,{\rm d}\x\, {\rm d}t,
\end{equation}
for each $\varphi \in W^{1,1}(0,T; H^2(\Omega))$ with $\partial_\n \varphi = 0$ on $\partial\Omega \times (0, T]$ and $\varphi(T)=0$, and if, moreover,
\begin{equation} \label{def:weak_sol_v}
\int_0^T \int_\Omega v \varphi_t \, \,{\rm d}\x\, {\rm d}t + \int_\Omega v_0 \varphi(\cdot, 0) \, \,{\rm d}\x = \int_0^T \int_\Omega \nabla v \cdot \nabla \varphi \, \,{\rm d}\x\, {\rm d}t + \int_0^T \int_\Omega uv \varphi \, \,{\rm d}\x\, {\rm d}t,
\end{equation}
for each $\varphi \in W^{1,1}([0,T); H^1(\Omega))$ with $\varphi(T)=0$.
\end{definition}

To facilitate the analysis and the passage to the limit as $h, k \to 0$, we introduce some auxiliary definitions. Given a sequence $\{w_h^n\}_{n=0}^N \subset X_h$, we define the following piecewise and continuous functions:
\begin{equation}\label{def:w_hk_tilde}
\widetilde w_{h,k}: [0,T]\to X_h\quad\mbox{ such that }  \widetilde w_{h,k}(t) := w_h^{n},\quad  t \in [t_n, t_{n+1}), 
\end{equation}
\begin{equation}\label{def:w_hk_hat}
\widehat w_{h,k}: [0,T]\to X_h\quad\mbox{ such that } \widehat w_{h,k}(t) := w_h^{n+1},\quad t \in [t_n, t_{n+1}), 
\end{equation}
and
\begin{equation} \label{def:w_hk}
\begin{array}{c}
w_{h,k}: [0,T]\to X_h\quad\mbox{ such that }
\\
\displaystyle
w_{h,k}(t) := w_h^{n+1} + \frac{w_h^{n+1} - w_h^n}{k}(t - t_{n+1}), \quad t \in [t_n, t_{n+1}).
\end{array}
\end{equation}

\begin{theorem}\label{th:main} Let $\Phi\in C([0,\infty)$ such that $\Phi(x)>0$ for all $x\in[0,\infty)$ and 
\begin{equation}\label{cond:Phi}
\liminf_{s\to 0} \frac{\Phi(s)}{s^\alpha}>0.
\end{equation}
Then there exists a subsequence of $\{\widetilde u_{h,k}, \widetilde v_{h,k} \}_{h,k>0}$, $\{ \widehat u_{h,k}, \widehat v_{h,k} \}_{h,k>0}$, and $\{ u_{h,k}, v_{h,k} \}_{h,k>0}$ that converges to limit functions $(u,v)$ satisfying the requirements in Definition  \eqref{def:very_weak_sol}.   
\end{theorem}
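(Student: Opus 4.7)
The plan is to follow the roadmap already hinted at in the introduction: first establish the discrete structural properties (pointwise bounds), then derive $(h,k)$-uniform a priori estimates, then apply compactness, and finally pass to the limit in the weak formulation, using the reconstructions \eqref{def:w_hk_tilde}--\eqref{def:w_hk} as bridges between the discrete and continuous settings.

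The first step is to produce the nodal bounds $u_h^{n+1}(\a_i)\ge 0$ and $v_h^{n+1}(\a_i)>0$ together with the maximum principle $\|v_h^{n+1}\|_{L^\infty(\Omega)}\le\|v_{0h}\|_{L^\infty(\Omega)}$. These follow in the usual way from the weakly acute mesh condition, which turns the relevant stiffness matrix into an M-matrix and lets one invoke a discrete maximum principle; strict positivity of $v_h^{n+1}$ is transported from \eqref{prop:v_0h} through the $v$-equation because the reaction $u^{n+1}_h v^{n+1}_h$ acts as a nonnegative dissipative term with nonnegative coefficient. With these bounds in hand, I would obtain discrete mass conservation for $u$ by testing the first line of \eqref{Alg} with $\bar u_h\equiv 1$, an $L^2(H^1)$ bound for $v$ by testing the second line with $v_h^{n+1}$, and a dual $(H^1)'$ estimate on $\delta_t u_h^{n+1}$ by using the commutator error \eqref{com_err_L1->L2-H1:I_h} together with the equivalence \eqref{equiv:L2_and_L2h}.

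The assumption \eqref{cond:Phi} is what upgrades the trivial $L^1$ control of $u$ into a slightly stronger integrability. Testing with a suitable nonlinear function of $u_h^{n+1}$ (in the spirit of \cite{Winkler_2023}), or alternatively exploiting that $\Phi(v_h^n)\ge c\,(v_h^n)^\alpha$ on the degeneration set together with the $L^\infty$-control of $v$ away from zero on small-time sets, yields a bound of the form $\int_0^T(\Phi(\widetilde v_{h,k})\widehat u_{h,k},1)_h\,\mathrm dt\le C$, which translates into a uniform $L^{1+\varepsilon}$-type estimate on $\widehat u_{h,k}$ on the set where $v_{h,k}$ is small. Combined with $L^\infty(L^1)$ mass conservation and the dual estimate, this gives uniform integrability suitable for Dunford--Pettis. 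For $v$, the $L^2(H^1)\cap L^\infty(L^\infty)$ control plus a bound on the discrete time derivative in $(H^1)'$ gives, via an Aubin--Lions--Dreher--Jüngel type lemma adapted to piecewise-constant-in-time functions, strong convergence of $\widetilde v_{h,k}$, $\widehat v_{h,k}$ and $v_{h,k}$ in $L^2(\Omega_T)$ and, after a further subsequence, a.e. convergence, whence $\Phi(\widetilde v_{h,k})\to \Phi(v)$ a.e.\ by continuity of $\Phi$ and dominated convergence in every $L^p(\Omega_T)$, $p<\infty$.

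The main obstacle is the passage to the limit in the nonlinear flux $\mathcal{I}_h[\Phi(v_h^n)u_h^{n+1}]\,\Delta\varphi$. The strategy is to write this as $\Phi(\widetilde v_{h,k})\widehat u_{h,k}\,\Delta\varphi$ plus a commutator remainder, control the remainder by \eqref{com_err_L1->L1-W1inf:I_h} and \eqref{err_Linf-W2inf:I_h} applied to the smooth test function (recall $\varphi\in W^{1,1}(0,T;H^2(\Omega))$ with Neumann data, so $\Delta\varphi$ is admissible as a bounded multiplier on each element after a standard density argument), and then combine the strong $L^p$-convergence of $\Phi(\widetilde v_{h,k})$ with the weak convergence of $\widehat u_{h,k}$ in the Dunford--Pettis space provided by the $\Phi$-weighted estimate to identify the limit as $\Phi(v)u$. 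The $v$-equation is easier: the reaction $(\widehat u_{h,k}\widehat v_{h,k},\bar v_h)_h$ passes to the limit because $\widehat v_{h,k}\to v$ strongly in $L^2(\Omega_T)$ and is uniformly bounded in $L^\infty$, so $\widehat v_{h,k}\varphi\to v\varphi$ strongly in, say, $L^\infty(\Omega_T)$, against the weakly convergent $\widehat u_{h,k}$. Recovering the initial data in \eqref{def:weak_sol_u}--\eqref{def:weak_sol_v} is routine once we use the continuous-in-time reconstruction $u_{h,k}$, the approximation properties of $\mathcal{Q}_h$ from \eqref{sta_Lp:Q_h}--\eqref{sta_H1':Q_h}, and a standard integration by parts in $t$ in the weak formulation.
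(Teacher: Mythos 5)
Your outline reproduces the easy parts of the argument (nodal bounds via the acute mesh, mass conservation, the $L^\infty\cap L^2(H^1)$ bounds for $v$, Dunford--Pettis for $u$, identification of $\Phi(v)u$ from a.e.\ convergence of $\Phi(\widetilde v_{h,k})$ plus weak $L^1$ convergence of $\widehat u_{h,k}$), but it has a genuine gap exactly where the degeneracy bites: the source of equi-integrability of $\{\widehat u_{h,k}\}$. The bound you invoke, $\int_0^T(\Phi(\widetilde v_{h,k})\widehat u_{h,k},1)_h\,{\rm d}t\le C$, is \emph{linear} in $u$ and follows trivially from mass conservation \eqref{mass_conv:u^n_h} and $\Phi(\widetilde v_{h,k})\le\max_{[0,\|v_{0h}\|_{L^\infty(\Omega)}]}\Phi$; it carries no information beyond $L^\infty(L^1)$ and cannot ``translate into a uniform $L^{1+\varepsilon}$-type estimate.'' Likewise there is no ``$L^\infty$-control of $v$ away from zero'': the whole difficulty is that $v$ may approach $0$. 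What is actually needed, and what the paper builds, is (i) the \emph{quadratic} weighted estimate $k\sum_n(\Phi(v^n_h)u^{n+1}_h,u^{n+1}_h)_h\le C_1$ in \eqref{bnd:Phi(v^n_h)u^(n+1)_h in L2L2h}, obtained not by testing with a nonlinear function of $u^{n+1}_h$ (the scheme is linear in $u^{n+1}_h$ and such a test gives no sign-definite diffusion term) but by duality, testing with $\psi^{n+1}_h$ from \eqref{eq:psi_h}; combined with \eqref{cond:Phi} this gives \eqref{bnd: (v_h^n)^alpha/2 u_n^n+1 in L2L2}; (ii) an entropy estimate from testing the $v$-equation with $\mathcal{I}_h[1/v^{n+1}_h]$, giving \eqref{bnd: log v^n_n in L2H1}; and (iii) the discrete Moser--Trudinger inequality \eqref{ineq:M-T}, which converts the $\log v$ bounds into the control \eqref{bnd:(inv_v^n+1_h)^p in L1logL1} of negative powers of $v$ and hence of the measure of the time set where the weight $(\widetilde v_{h,k})^{\alpha}$ degenerates. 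Only the combination of the weighted Cauchy--Schwarz splitting over ``good'' and ``bad'' time sets yields equi-integrability; none of these three ingredients appears in your proposal, so the Dunford--Pettis step is unsupported.

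Two further points. First, the commutator you must control is not the mass-lumping error against the smooth test function but $\mathcal{I}_h[\Phi(\widetilde v_{h,k})\widehat u_{h,k}]-\Phi(\widetilde v_{h,k})\widehat u_{h,k}$ in $L^1(\Omega_T)$, where $\Phi$ is merely continuous and $\widehat u_{h,k}$ only equi-integrable; \eqref{com_err_L1->L1-W1inf:I_h} and \eqref{err_Linf-W2inf:I_h} do not apply directly, and the paper needs a mollification $\Phi_\lambda$ of $\Phi$, Chebyshev-type splittings of $\Omega_T$, inverse estimates and the equi-integrability itself to make this term vanish. Second, your Aubin--Lions/Dreher--J\"ungel route for the strong convergence of $v$ is delicate as stated: the reaction term $(\widehat u_{h,k}\widehat v_{h,k},\cdot)_h$ is only controlled against $L^\infty$-in-space test functions (not $H^1$ in 2D), and the equation holds only against discrete test functions, so a dual bound on $\partial_t v_{h,k}$ requires extra projection/mass-lumping arguments; the paper sidesteps this by proving a time-translation estimate of order $\delta^{1/2}$ directly from the scheme and invoking Simon's theorem. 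That part of your plan is repairable, but the missing entropy/Moser--Trudinger machinery for the equi-integrability of $\widehat u_{h,k}$ is an essential omission.
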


The proof of Theorem \ref{th:main} is divided into the following four sections. 

\section{Nodal bounds}\label{sec:nodal_bnds}
The acuteness condition ensures key properties of the resulting finite element matrices through \eqref{Alg}, particularly non-negativity, positivity, and the discrete maximum principle for the discrete solution sequence pair $\{(u^n_h, v^n_h)\}_{n=0}^N$. 

\begin{lemma} For $n=0, \cdots, N$, there hold, for all  $\boldsymbol{x}\in\Omega$, that: 
\begin{itemize}
\item Nonnegativity:
\begin{equation}\label{posit:u^n_h}
u^n_h(\boldsymbol{x})\ge 0.
\end{equation}
\item Positivity and maximum principle:
\begin{equation}\label{Positivity_and_DMP:v^n_h}
0<v_h^n(\boldsymbol{x})\le \|v_{0h}\|_{L^\infty(\Omega)}.
\end{equation} 
\end{itemize}
\end{lemma}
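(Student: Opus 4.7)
The plan is to proceed by induction on $n=0,1,\dots,N$. The base case is immediate from \eqref{prop:u_0h}--\eqref{prop:v_0h}, and because every element of $X_h$ is a convex combination of its nodal values on each simplex, it suffices to verify all the claims at the nodes $\a_i$. Assuming as inductive hypothesis that $U_i^n:=u_h^n(\a_i)\ge 0$ and $0<V_i^n:=v_h^n(\a_i)\le\|v_{0h}\|_{L^\infty(\Omega)}$ for every $i$, test \eqref{Alg} with $\bar u_h=\varphi_{\a_i}$ and $\bar v_h=\varphi_{\a_i}$. Setting $M_i:=\|\varphi_{\a_i}\|_{L^1(\Omega)}>0$ and $S_{ij}:=(\nabla\varphi_{\a_j},\nabla\varphi_{\a_i})$, the mass lumping built into $(\cdot,\cdot)_h$ collapses the scheme to the scalar identities
\[
\tfrac{M_i}{k}(U_i^{n+1}-U_i^n)+\sum_{j}S_{ij}\,\Phi(V_j^n)\,U_j^{n+1}=0,\qquad \tfrac{M_i}{k}(V_i^{n+1}-V_i^n)+\sum_{j}S_{ij}\,V_j^{n+1}+M_i\,U_i^{n+1}V_i^{n+1}=0.
\]
The weakly acute condition on $\Sigma_h$ yields $S_{ij}\le 0$ for $i\neq j$, and $\sum_j\varphi_{\a_j}\equiv 1$ gives $\sum_j S_{ij}=0$, so that $S_{ii}=\sum_{j\neq i}(-S_{ij})\ge 0$.

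To deduce $U_i^{n+1}\ge 0$ I introduce the change of variables $W_i:=\Phi(V_i^n)\,U_i^{n+1}$, which are exactly the nodal coefficients of $\mathcal{I}_h[\Phi(v_h^n)u_h^{n+1}]$ in the basis $\{\varphi_{\a_j}\}$. Using the identity $\sum_j S_{ij}W_j=-\sum_{j\neq i}(-S_{ij})(W_j-W_i)$, itself a direct consequence of $S_{ii}=-\sum_{j\neq i}S_{ij}$, the first nodal identity rewrites as
\[
\tfrac{M_i}{k\,\Phi(V_i^n)}\,W_i=\tfrac{M_i}{k}\,U_i^n+\sum_{j\neq i}(-S_{ij})(W_j-W_i).
\]
Evaluating at the index $i^\star$ where $W$ attains its minimum, every $W_j-W_{i^\star}\ge 0$, every $-S_{i^\star j}\ge 0$, $U_{i^\star}^n\ge 0$ and $\Phi(V_{i^\star}^n)>0$, forcing $W_{i^\star}\ge 0$. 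Hence $W_i\ge 0$ for all $i$, and since $\Phi>0$ one recovers $U_i^{n+1}=W_i/\Phi(V_i^n)\ge 0$.

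For $v_h^{n+1}$ the same rearrangement is applied directly to $V$, since no spatially varying coefficient obstructs its stiffness term; the identity becomes
\[
\tfrac{M_i}{k}(1+kU_i^{n+1})\,V_i^{n+1}=\tfrac{M_i}{k}\,V_i^n+\sum_{j\neq i}(-S_{ij})(V_j^{n+1}-V_i^{n+1}),
\]
with $1+kU_i^{n+1}\ge 1$ by the step just proven. At the minimum node of $V^{n+1}$ the sum on the right is nonnegative and $V_i^n>0$, giving strict positivity of $V^{n+1}$ everywhere. At the maximum node the sum is nonpositive, so $V_i^{n+1}\le V_i^n/(1+kU_i^{n+1})\le\max_j V_j^n\le\|v_{0h}\|_{L^\infty(\Omega)}$, closing the induction.

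The main obstacle is the nonconstant, nonsymmetric coefficient $\Phi(v_h^n)$ inside the $u$-diffusion: a direct discrete minimum-principle applied to $U$ breaks down because the off-diagonal terms, after using $\sum_j S_{ij}=0$, involve differences $\Phi(V_j^n)U_j^{n+1}-\Phi(V_i^n)U_i^{n+1}$ instead of $U_j^{n+1}-U_i^{n+1}$, and these have no definite sign. The passage to $W=\Phi(v_h^n)\,u_h^{n+1}$ is natural because $\mathcal{I}_h[\Phi(v_h^n)u_h^{n+1}]$ expands on the nodal basis precisely with coefficients $W_j$; it restores exactly the M-matrix structure that the combination of mass lumping and weak acuteness is designed to exploit. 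Once $u_h^{n+1}\ge 0$ is in hand, the bounds on $v_h^{n+1}$ become a standard Z-matrix argument in which the zeroth-order reaction $M_iU_i^{n+1}V_i^{n+1}$ only reinforces the two-sided bound.
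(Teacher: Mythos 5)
Your proof is correct, and for the chemical variable $v_h^{n+1}$ it is essentially the paper's argument (test with $\varphi_{\boldsymbol{a}_i}$ at an extremal node, use $(\nabla\varphi_{\boldsymbol{a}_j},\nabla\varphi_{\boldsymbol{a}_i})\le 0$ for $i\ne j$, zero row sums, and $u_h^{n+1}(\boldsymbol{a}_i)\ge 0$), merely phrased directly rather than by contradiction. Where you genuinely depart from the paper is the nonnegativity of $u_h^{n+1}$: the paper tests $\eqref{Alg}_1$ with $\mathcal{I}_h[\Phi(v^n_h)\,u^{n+1,-}_h]$ and runs an energy-type argument, using the induction hypothesis $u^n_h\ge 0$ in the lumped time term and the acuteness to control the cross term between the interpolated positive and negative parts, concluding $\|\sqrt{\Phi(v^n_h)}\,u^{n+1,-}_h\|_h=0$; you instead pass to the nodal unknowns $W_i=\Phi(v^n_h(\boldsymbol{a}_i))\,u^{n+1}_h(\boldsymbol{a}_i)$, i.e.\ the coefficients of $\mathcal{I}_h[\Phi(v^n_h)u^{n+1}_h]$, and apply a discrete minimum principle at the minimizing node, which is exactly the M-matrix structure restored by this change of variables. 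Your route is arguably more elementary and unified, since both equations are then handled by the same Z-matrix/minimum-node argument, and it makes transparent why a direct minimum principle on $U$ alone would fail (the differences $\Phi(V^n_j)U^{n+1}_j-\Phi(V^n_i)U^{n+1}_i$ have no sign); the paper's variational negative-part test has the advantage of matching the style of test functions reused in the subsequent a priori estimates (e.g.\ the dual estimate built on \eqref{eq:psi_h}), but both proofs rest on the same ingredients: mass lumping, weak acuteness, $\Phi(v^n_h)>0$ from the induction hypothesis, and $u^n_h\ge 0$.
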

\begin{proof}

Assertions \eqref{posit:u^n_h} and \eqref{Positivity_and_DMP:v^n_h}  hold for $n = 0$ in view of our assumptions on $(u_{0h}, v_{0h})$ in \eqref{prop:u_0h} and \eqref{prop:v_0h}, respectively. Suppose now that they are true for some $n \in \{0, \dots, N - 1\}$ and false for $n+1$.  Then,  we proceed by contradiction.

$\bullet$ Nonnegativity in \eqref{posit:u^n_h}. Given $\chi_h\in X_h$, define $\chi_h^+$ and $\chi_h^-$ be the positive and negative parts of $\chi_h$, respectively. Substituting $u_h=\mathcal{I}_h[\Phi(v^n_h) u^{n+1,-}_h] $ into $\eqref{Alg}_1$, we obtain
$$
(\delta_t u^{n+1}_h, \Phi(v^n_h) u^{n+1,+}_h )_h +(\nabla\mathcal{I}_h[\Phi(v^n_h) u^{n+1}_h], \nabla\mathcal{I}_h[\phi(v^n_h)u^{n+1,-}_h])=0.
$$
Observe that 
$$
\begin{array}{rcl}
(\delta_t u^{n+1}_h, \Phi(v^n_h) u^{n+1,-}_h )_h &=&\displaystyle
\frac{1}{k} \sum_{i\in I}  \Phi(v^n_h(\a_i))\Big[(u^{n+1,-}_h(\a_i))^2 
- u^n_h(\a_i) u^{n+1,-}_h (\a_i) \Big]  \|\varphi_{\a_i}\|_{L^1(\Omega)} 
\\
&\ge&\displaystyle
\sum_{i\in I} \Phi(v^n_h(\a_i))(u^{n+1,-}_h(\a_i))^2 \|\varphi_{\a_i}\|_{L^1(\Omega)}
\\
&=&\|\sqrt{\Phi(v^n_h)} u^{n+1,-}_h\|^2_h,
\end{array}
$$
since $u^n_h(\a_i) u^{n+1,-}_h (\a_i)<0$, and  
$$
\begin{array}{rcl}
(\nabla\mathcal{I}_h[\Phi(v^n_h) u^{n+1}_h], \nabla\mathcal{I}_h[\phi(v^n_h)u^{n+1,+}_h])&=&\|\nabla\mathcal{I}_h[\Phi(v^n_h) u^{n+1,-}_h]\|_{L^2(\Omega)}^2
\\
&+&(\nabla\mathcal{I}_h[\Phi(v^n_h) u^{n+1,+}_h], \nabla\mathcal{I}_h[\Phi(v^n_h) u^{n+1,-}_h])
\\
&=&\|\nabla\mathcal{I}_h[\Phi(v^n_h) u^{n+1,-}_h]\|_{L^2(\Omega)}^2. 
\end{array}
$$
Therefore, 
$$
\|\sqrt{\Phi(v^n_h)}u^{n+1,-}_h\|_h^2+\|\nabla\mathcal{I}_h[\Phi(v^n_h) u^{n+1,-}_h]\|_{L^2(\Omega)}^2\le 0. 
$$
From the fact that $\Phi(v^n_h)>0$, since $v^n_h>0$ from our induction hypothesis, there holds \break $\|\sqrt{\Phi(v^n_h)}u^{n+1,-}_h)\|_h^2=0$. This observation leads to $u^{n+1,-}_h(\a_i)=0$ for $i\in I$, and gives a contradiction. 

$\bullet$ Positivity in \eqref{Positivity_and_DMP:v^n_h}. Fixing $i \in \{1, \dots, I\}$, for which we know that there is at least a negative minimum of $v^{n+1}_h$ at $\x=\a_i\in\mathcal{N}_h$, and inserting $\bar  u_h = \varphi_{\a_i}$ into $\eqref{Alg}_2$, we have
$$
\begin{array}{c}
\displaystyle
 v^{n+1}_h(\a_i) \|\varphi_{\a_i}\|_{L^1(\Omega)} + k \sum_{j\in I} v^{n+1}_h(\a_j) (\nabla \varphi_{\a_j}, \nabla\varphi_{\a_i})
\\ 
+u^{n+1}_h(\a_i) v^{n+1}_h(\a_i) \|\varphi_{\a_i}\|_{L^1(\Omega)} = v_h^n(\a_j)\|\varphi_{\a_i}\|_{L^1(\Omega)}.
\end{array}
$$
As $(\nabla\varphi_{\a_j}, \nabla\varphi_{\a_i})\le 0 $ for $i\not=j$, and $v_h^{n+1}(\a_i)\le v_h^{n+1}(\a_j)$, for all $j\in I_{\a_i}$, we infer that
$$
(\nabla\varphi_{\a_j}, \nabla\varphi_{\a_i}) v_h^{n+1}(\a_j)\le (\nabla\varphi_{\a_j}, \nabla\varphi_{\a_i}) v_h^{n+1}(\a_i), 
$$
which implies, in turn, on noting $\sum_{j\in I}\nabla\varphi_{\a_j}=0$ and $u^{n+1}_h(\a_i)\ge0$, that  
$$
v^{n+1}_h(\a_i) \|\varphi_{\a_i}\|_{L^1(\Omega)}  \ge v_h^n(\a_i) \|\varphi_{\a_i}\|_{L^1(\Omega)} .
$$
As $v^n_h(\a_i)>0$ by induction hypothesis, this results in a contradiction, since  $u^{n+1}_h(\a_i)<0$. 

$\bullet$ Maximum principle for $v^{n+1}_h$.  Recall by induction that $v^n_h\le \|v_{0h}\|_{L^\infty(\Omega)}$ is true for $n\in\{0,\cdots, N-1\}$. Further we assume  that $v^{n+1}_h$ has a local maximum at $\x=\a_i\in\mathcal{N}_h$, for which $v^{n+1}_h(\a_i)>\|v_{0h}\|_{L^\infty(\Omega)}$ holds. Let now $\bar v_h=\varphi_{\a_i}$ in $\eqref{Alg}_2$ to get 
$$
\begin{array}{rcl}
\displaystyle
v^{n+1}_h(\a_i) \|\varphi_{\a_i}\|_{L^1(\Omega)}+k\sum_{j\in I_{\a_i}} v^{n+1}_h(\a_j)(\nabla\varphi_{\a_j} , \nabla \varphi_{\a_i})&&
\\
+u^{n+1}_h(\a_i) v^{n+1}_h(\a_i) \|\varphi_{\a_i}\|_{L^1(\Omega)}&=& v^n_h(\a_i) \|\varphi_{\a_i}\|_{L^1(\Omega)}. 
\end{array}
$$
Thus, conjugating $(\nabla\varphi_{\a_j}, \nabla\varphi_{\a_i})\le 0$ with $v^{n+1}_h (\a_j)\le v_h^{n+1}(\a_i)$, for all $j\in I_{\a_j}$, yields, on noting $u^{n+1}_h(\a_i)\ge0$, that 
$$
v^{n+1}_h(\a_i) \Big(\|\varphi_{\a_i}\|_{L^1(\Omega)}+u^{n+1}_h(\a_i) \|\varphi_{\a_i}\|_{L^1(\Omega)} \Big) \le \|\varphi_{\a_i}\|_{L^1(\Omega)} v^{n+1}_h(\a_i)
$$
and hence
$$
v^{n+1}_h(\a_i) \le v^n_h(\a_i),
$$
which contradicts our hypothesis.  
\end{proof}
\section{A priori estimates}\label{sec:a_priori_bnds}
 Our first step is to establish a bound for the total mass of the sequence pair $\{(u^n_h, v^n_h)\}_{n=0}^N$. 
\begin{lemma}
For all $n \in \{0,\cdots, N\}$, it follows that
\begin{equation}\label{mass_conv:u^n_h}
\|u^n_h\|_{L^1(\Omega)} = \|u_{0h}\|_{L^1(\Omega)},
\end{equation}
and
\begin{equation}\label{bnd_L1:v^n_h}
\|v^n_h\|_{L^1(\Omega)} \le \|v_{0h}\|_{L^1(\Omega)}.
\end{equation}
\end{lemma}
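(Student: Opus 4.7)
The plan is to test both equations of \eqref{Alg} against the constant function $\bar u_h = \bar v_h = 1 \in X_h$, exploiting that its gradient vanishes and the pointwise sign properties already established in \eqref{posit:u^n_h} and \eqref{Positivity_and_DMP:v^n_h}. The key preliminary observation is that, because $\sum_{i\in I}\varphi_{\a_i}\equiv 1$, one has, for any $\chi_h\in X_h$,
\begin{equation*}
(\chi_h,1)_h=\sum_{i\in I}\chi_h(\a_i)\|\varphi_{\a_i}\|_{L^1(\Omega)}=\int_\Omega \chi_h\,\dx,
\end{equation*}
and in particular, whenever $\chi_h\ge 0$ at the nodes, $(\chi_h,1)_h=\|\chi_h\|_{L^1(\Omega)}$.

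For the mass conservation \eqref{mass_conv:u^n_h}, I would take $\bar u_h=1$ in $\eqref{Alg}_1$. Since $\nabla 1=0$, the diffusion term drops out and I obtain $(\delta_t u^{n+1}_h,1)_h=0$, i.e.
\begin{equation*}
\int_\Omega u^{n+1}_h\,\dx=\int_\Omega u^n_h\,\dx.
\end{equation*}
Using the nonnegativity \eqref{posit:u^n_h} of $u^{n+1}_h$ and $u^n_h$, the integrals coincide with the $L^1(\Omega)$ norms. A finite induction over $n$ starting from $u^0_h=u_{0h}$ then yields \eqref{mass_conv:u^n_h}.

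For \eqref{bnd_L1:v^n_h}, I would insert $\bar v_h=1$ in $\eqref{Alg}_2$, which again kills the diffusion term and leaves
\begin{equation*}
(\delta_t v^{n+1}_h,1)_h+(v^{n+1}_h u^{n+1}_h,1)_h=0.
\end{equation*}
By definition of the lumped inner product, the reaction term equals $\sum_{i\in I} v^{n+1}_h(\a_i)u^{n+1}_h(\a_i)\|\varphi_{\a_i}\|_{L^1(\Omega)}$, which is nonnegative thanks to \eqref{posit:u^n_h} and \eqref{Positivity_and_DMP:v^n_h}. Consequently
\begin{equation*}
\int_\Omega v^{n+1}_h\,\dx\le\int_\Omega v^n_h\,\dx,
\end{equation*}
and, since $v^{n+1}_h>0$ pointwise by \eqref{Positivity_and_DMP:v^n_h}, these integrals equal the corresponding $L^1$ norms. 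Iterating the resulting inequality down to $n=0$ gives \eqref{bnd_L1:v^n_h}.

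There is no real obstacle here; the only subtlety is the identity $(\chi_h,1)_h=\int_\Omega\chi_h\,\dx$, which is what ultimately makes mass lumping compatible with mass conservation, and the systematic use of the nodal sign properties established in the previous section to turn the lumped sums into $L^1$ norms.
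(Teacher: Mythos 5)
Your proposal is correct and follows essentially the same route as the paper: test $\eqref{Alg}_1$ and $\eqref{Alg}_2$ with the constant $1$, note that the lumped term $(\chi_h,1)_h$ equals $\int_\Omega\chi_h\,\dx$, and invoke the nodal sign properties \eqref{posit:u^n_h} and \eqref{Positivity_and_DMP:v^n_h} to identify the integrals with $L^1$ norms and to discard the nonnegative reaction term. No gaps.
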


\begin{proof}
We first test equation~$\eqref{Alg}_1$ with $\bar u_h = 1$, which yields
$$
\int_\Omega u^{n+1}_h\,\dx = \int_\Omega u^n_h\,\dx,
$$
and thus \eqref{mass_conv:u^n_h} follows from~\eqref{posit:u^n_h}.

Next, we test equation~$\eqref{Alg}_2$ with $\bar v_h = 1$, obtaining
$$
\int_\Omega v^{n+1}_h\,\dx + (u^{n+1}_h, v^{n+1}_h)_h = \int_\Omega v^n_h\,\dx.
$$
From \eqref{posit:u^n_h} and \eqref{Positivity_and_DMP:v^n_h}, we deduce that \eqref{bnd_L1:v^n_h} holds.
\end{proof}
A first consequence of the control of the $L^1(\Omega)$-norm is the following dual bound.
\begin{lemma} Let $k>0$ such that 
\begin{equation}\label{smallness:k}
2 k\max_{x\in [0,\|v_{0h}\|_{L^\infty(\Omega)}]} \Phi(x) < 1.
\end{equation}
Then there is   a constant $C_1=C_1(u_0, v_0, \Phi, T) > 0$, independent of $(h,k)$, satisfying 
\begin{equation}\label{bnd:Phi(v^n_h)u^(n+1)_h in L2L2h}
\max_{n\in\{0, \cdots, N-1\}}\|\Psi^{n+1}_h\|^2_{H^1_h(\Omega)}+\displaystyle
k\sum_{n=0}^{N-1} (k^2 \|\delta_t\Psi^{n+1}_h\|_{H^1_h(\Omega)}^2+(\Phi(v^n_h)u^{n+1}_h, u^{n+1}_h)_h) \le C_1.
\end{equation}
with $\Psi^{n+1}_h$ defined in \eqref{eq:psi_h}. 
\end{lemma}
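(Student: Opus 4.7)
The proof hinges on testing $\eqref{Alg}_1$ with the dual variable $\Psi^{n+1}_h$ from \eqref{eq:psi_h}, which, guided by the uniqueness argument reproduced earlier in the excerpt, I read as the discrete $H^1_h$-Riesz representative of $u^{n+1}_h$: $\Psi^{n+1}_h\in X_h$ is the unique solution of
\begin{equation*}
(\Psi^{n+1}_h, \bar\chi_h)_h + (\nabla \Psi^{n+1}_h, \nabla \bar\chi_h) = (u^{n+1}_h, \bar\chi_h)_h \quad \text{for all } \bar\chi_h \in X_h,
\end{equation*}
so that $\|\Psi^{n+1}_h\|^2_{H^1_h(\Omega)} := (\Psi^{n+1}_h, \Psi^{n+1}_h)_h + \|\nabla \Psi^{n+1}_h\|^2_{L^2(\Omega)} = (u^{n+1}_h, \Psi^{n+1}_h)_h$, with an analogous identity for the time difference $\delta_t \Psi^{n+1}_h$ obtained by subtracting the definition at levels $n+1$ and $n$.

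Choosing $\bar u_h = \Psi^{n+1}_h$ in $\eqref{Alg}_1$ and using the defining property with $\bar\chi_h = \Psi^{n+1}_h$ yields
\begin{equation*}
(\delta_t u^{n+1}_h, \Psi^{n+1}_h)_h = (\delta_t \Psi^{n+1}_h, \Psi^{n+1}_h)_h + (\nabla \delta_t \Psi^{n+1}_h, \nabla \Psi^{n+1}_h).
\end{equation*}
The telescoping identity $2a(a-b) = a^2 - b^2 + (a-b)^2$ applied to each summand then produces the discrete energy expression $k (\delta_t u^{n+1}_h, \Psi^{n+1}_h)_h = \tfrac{1}{2}(\|\Psi^{n+1}_h\|^2_{H^1_h} - \|\Psi^n_h\|^2_{H^1_h}) + \tfrac{k^2}{2}\|\delta_t \Psi^{n+1}_h\|^2_{H^1_h}$. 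For the diffusion term, plugging $\bar\chi_h = \mathcal{I}_h[\Phi(v^n_h) u^{n+1}_h]$ into the definition of $\Psi^{n+1}_h$ gives
\begin{equation*}
(\nabla \Psi^{n+1}_h, \nabla \mathcal{I}_h[\Phi(v^n_h) u^{n+1}_h]) = (\Phi(v^n_h) u^{n+1}_h, u^{n+1}_h)_h - (\Psi^{n+1}_h, \Phi(v^n_h) u^{n+1}_h)_h,
\end{equation*}
exposing the sought coercivity contribution.

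The cross term is treated by node-wise Young's inequality together with the discrete maximum principle \eqref{Positivity_and_DMP:v^n_h}: $k(\Psi^{n+1}_h, \Phi(v^n_h) u^{n+1}_h)_h \le \tfrac{k}{2}(\Phi(v^n_h) u^{n+1}_h, u^{n+1}_h)_h + \tfrac{k}{2}\Phi_{\max} \|\Psi^{n+1}_h\|^2_{H^1_h}$, with $\Phi_{\max} := \max_{x \in [0,\|v_{0h}\|_{L^\infty(\Omega)}]} \Phi(x)$. Combining the three displays above and rearranging produces the per-step inequality
\begin{equation*}
(1 - k\Phi_{\max})\|\Psi^{n+1}_h\|^2_{H^1_h} + k^2 \|\delta_t \Psi^{n+1}_h\|^2_{H^1_h} + k (\Phi(v^n_h) u^{n+1}_h, u^{n+1}_h)_h \le \|\Psi^n_h\|^2_{H^1_h}.
\end{equation*}
The smallness condition \eqref{smallness:k} ensures $1 - k\Phi_{\max} \ge 1/2$, which unlocks a discrete Gronwall argument: summing from $n=0$ up to $n=m-1$ and taking the maximum over $m \in \{1,\dots,N\}$ bounds both $\max_m \|\Psi^m_h\|^2_{H^1_h}$ and the dissipation sum by a multiple of $\|\Psi^0_h\|^2_{H^1_h}$; note that the stated time-difference quantity $k^3 \sum_n \|\delta_t\Psi^{n+1}_h\|^2_{H^1_h}$ is weaker than the $k^2 \sum_n$ actually obtained, so it is directly implied.

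It remains to control $\|\Psi^0_h\|^2_{H^1_h} = (u_{0h}, \Psi^0_h)_h$. Using the defining property \eqref{def:Q_h} of $\mathcal{Q}_h$ with $u_{0h} = \mathcal{Q}_h u_0$ converts this discrete pairing into the continuous one, $(u_{0h}, \Psi^0_h)_h = (u_0, \Psi^0_h)_{L^2(\Omega)} \le \|u_0\|_{(H^1(\Omega))'} \|\Psi^0_h\|_{H^1(\Omega)}$, and since $\|\Psi^0_h\|_{H^1(\Omega)} \le \|\Psi^0_h\|_{H^1_h(\Omega)}$ by \eqref{equiv:L2_and_L2h}, one concludes $\|\Psi^0_h\|_{H^1_h(\Omega)} \le \|u_0\|_{(H^1(\Omega))'}$, and \eqref{bnd:Phi(v^n_h)u^(n+1)_h in L2L2h} follows. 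The principal technical obstacle lies precisely in this initial-time estimate: because $u_0$ lies only in $L^1(\Omega)\cap(H^1(\Omega))'$, carrying no $L^2(\Omega)$ bound, one cannot afford to pass through the commutator \eqref{com_err_L1->L2-H1:I_h} when relating $(\cdot,\cdot)_h$ to $(\cdot,\cdot)$ at this step and must instead exploit the duality built directly into the construction of $\mathcal{Q}_h$.
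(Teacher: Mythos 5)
Your argument is correct and follows the paper's skeleton: define $\Psi^{n+1}_h$ by \eqref{eq:psi_h}, test $\eqref{Alg}_1$ with it, use the differenced relation for $\delta_t\Psi^{n+1}_h$ to telescope the $H^1_h$-energy, insert $\bar\psi_h=\mathcal{I}_h[\Phi(v^n_h)u^{n+1}_h]$ into \eqref{eq:psi_h} to expose $(\Phi(v^n_h)u^{n+1}_h,u^{n+1}_h)_h$, absorb the cross term under \eqref{smallness:k}, and conclude by discrete Gr\"onwall plus an $(h,k)$-uniform bound on $\|\Psi^0_h\|_{H^1_h(\Omega)}$. Two steps differ in execution, both legitimately. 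For the cross term you use a node-wise weighted Young inequality (valid since $\Phi\ge0$ and $v^n_h$ obeys \eqref{Positivity_and_DMP:v^n_h}), which even retains half of the dissipation and, notably, does not require the sign information $\Psi^{n+1}_h\ge0$; the paper instead bounds $(\Psi^{n+1}_h,\Phi(v^n_h)u^{n+1}_h)_h\le \Phi_{\max}(\Psi^{n+1}_h,u^{n+1}_h)_h=\Phi_{\max}\|\Psi^{n+1}_h\|^2_{H^1_h(\Omega)}$, which leans on the nonnegativity \eqref{posit:Psi_h} obtained from the $M$-matrix structure, so your route slightly reduces the structural input needed. For the initial datum you exploit \eqref{def:Q_h} to write $(u_{0h},\Psi^0_h)_h=(u_0,\Psi^0_h)$ and get $\|\Psi^0_h\|_{H^1_h(\Omega)}\le\|u_0\|_{(H^1(\Omega))'}$ directly, whereas the paper keeps the discrete pairing and combines the commutator bound \eqref{com_err_L1->L2-H1:I_h} with the inverse inequality \eqref{inv_ineq_Wnp->Wmq} and \eqref{prop:u_0h} to reach $\|\Psi^0_h\|_{H^1_h(\Omega)}\le C_{\rm com}C_{\rm inv}\|u_{0h}\|_{L^1(\Omega)}+\|u_{0h}\|_{(H^1(\Omega))'}$; your version is cleaner, but your closing claim that the commutator route is unaffordable without an $L^2$ bound on $u_0$ is not accurate, since the factor $h$ in \eqref{com_err_L1->L2-H1:I_h} exactly cancels the $h^{-1}$ from $\|u_{0h}\|_{L^2(\Omega)}\le C_{\rm inv}h^{-1}\|u_{0h}\|_{L^1(\Omega)}$, which is precisely how the paper proceeds. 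This misreading does not affect the validity of your proof.
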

\begin{proof}
Let $\psi^{n+1}_h\in X_h$ be such that, for all $\bar\psi_h\in X_h$,  
\begin{equation}\label{eq:psi_h}
(\nabla\psi^{n+1}_h, \nabla\bar\psi_h)+(\psi^{n+1}_h, \bar\psi_h)_h=(u^{n+1}_h, \bar\psi_h)_h.
\end{equation}
As $u^{n+1}_h\ge0 $ from \eqref{posit:u^n_h}, one can prove that 
\begin{equation}\label{posit:Psi_h}
\Psi^{n+1}_h\ge0,
\end{equation} 
since the matrix resulting from \eqref{eq:psi_h} is an $M$-matrix.
  
From now on, we denote $\|\cdot\|_{H^1_h(\Omega)}=\|\nabla\cdot\|_{L^2(\Omega)}+\|\cdot\|_h$. Take $\bar u_h=\psi^{n+1}_h$ in $\eqref{Alg}_1$ to get 
\begin{equation}\label{lm6:lab1}
(\delta_t u^{n+1}_h, \psi^{n+1}_h)_h+(\nabla \mathcal{I}_h[\Phi(v^n_h) u^{n+1}_h], \nabla \psi^{n+1}_h)=0.
\end{equation}
It is not hard to see from $\eqref{eq:psi_h}$ that
\begin{equation}\label{eq:delta_tpsi_h}
(\nabla\delta_t\psi^{n+1}_h, \nabla\bar\psi_h)+(\delta_t\psi^{n+1}_h, \bar\psi_h)_h=(\delta_t u^{n+1}_h, \bar\psi_h)_h.
\end{equation}
Choosing $\bar\psi_h=\psi^{n+1}_h$ in \eqref{eq:delta_tpsi_h} and substituting this into \eqref{lm6:lab1} yields 
$$
\|\psi^{n+1}_h\|^2_{H^1_h(\Omega)}-\|\psi^n_h\|^2_{H^1_h(\Omega)}+\|\psi^{n+1}_h-\psi^n_h\|^2_{H^1_h(\Omega)}+2k(\nabla\mathcal{I}_h[\Phi(v^n_h) u^{n+1}_h], \nabla\psi^{n+1}_h)=0.
$$ 
Next select $\bar\psi_h=\mathcal{I}_h[\phi(v^n_h) u^{n+1}_h]$ in \eqref{eq:psi_h}  to have
$$
(\nabla\psi^{n+1}_h, \nabla\mathcal{I}_h[\Phi(v^n_h) u^{n+1}_h])+(\psi^{n+1}_h, \phi(v^n_h) u^{n+1}_h)_h=(u^{n+1}_h, \Phi(v^n_h) u^{n+1}_h)_h
$$ 
and hence 
$$
\|\psi^{n+1}_h\|^2_{H^1_h(\Omega)}-\|\psi^n_h\|^2_{H^1_h(\Omega)}+\|\psi^{n+1}_h-\psi^n_h\|^2_{H^1_h(\Omega)} + 2 k (\Phi(v^n_h)u^{n+1}_h, u^{n+1}_h)_h=2 k (\psi^{n+1}_h, \Phi(v^n_h) u^{n+1}_h)_h.
$$ 
In view of \eqref{posit:Psi_h} and \eqref{Positivity_and_DMP:v^n_h}, we bound:
$$
\begin{array}{rcl}
(\psi^{n+1}_h, \Phi(v^n_h) u^{n+1}_h)_h&=&\displaystyle
\sum_{i\in I} \psi^{n+1}_h(\a_i) \Phi(v^n_h(\a_i)) u^{n+1}_h(\a_i) \|\varphi_{\a_i}\|_{L^1(\Omega)}
\\
&\le&\displaystyle
\max_{s\in [0,\|v_{0h}\|_{L^\infty(\Omega)}]} \Phi(s)  \sum_{i\in I}  \psi^{n+1}_h(\a_i)  u^{n+1}_h(\a_i) \|\varphi_{\a_i}\|_{L^1(\Omega)}
\\
&=&\displaystyle  \max_{s\in [0,\|v_{0h}\|_{L^\infty(\Omega)}]} \Phi(s)  (\psi^{n+1}_h, u^{n+1}_h)_h
\\
&=&\displaystyle
\max_{s\in [0,\|v_{0h}\|_{L^\infty(\Omega)}]} \Phi(s) \|\psi^{n+1}_h\|^2_{H^1_h(\Omega)},
\end{array}
$$
where the last equality is obtained by taking $\bar\psi_h=\psi^{n+1}_h$ in \eqref{eq:psi_h}. Therefore,
\begin{equation}\label{lm6:lab2}
\begin{array}{rcl}
\|\psi^{n+1}_h\|^2_{H^1_h(\Omega)}-\|\psi^n_h\|^2_{H^1_h(\Omega)}+\|\psi^{n+1}_h-\psi^n_h\|^2_{H^1_h(\Omega)}
\\
+ 2 k(\Phi(v^n_h)u^{n+1}_h, u^{n+1}_h)_h\le 2 k \max_{x\in [0,\|v_{0h}\|_{L^\infty(\Omega)}]} \Phi(x) \|\psi^{n+1}_h\|^2_{H^1_h(\Omega)}.
\end{array}
\end{equation}
From \eqref{smallness:k}, Grönwall's inequality leads to 
$$
\begin{array}{rcl}
\displaystyle
\max_{n\in\{0, \cdots, N-1\}}\|\Psi^{n+1}_h\|^2_{H^1_h(\Omega)}&+&\displaystyle
k\sum_{n=0}^{N-1} (k^2 \|\delta_t\Psi^{n+1}_h\|_{H^1_h(\Omega)}^2+(\Phi(v^n_h)u^{n+1}_h, u^{n+1}_h)_h)
\\
&&\le e^{ \displaystyle2 T \max_{s\in [0,\|v_{0h}\|_{L^\infty(\Omega)}]} \Phi(s)} \|\psi^0_h\|^2_{H^1_h(\Omega)} 
\\
&\le& e^{\displaystyle 2 T \max_{s\in [0,\|v_{0h}\|_{L^\infty(\Omega)}]} \Phi(s)} \Big(C_{\rm com} C_{\rm inv}\|u^0_h\|_{L^1(\Omega)}+\|u^0_h\|_{(H^1(\Omega)'}\Big).
\end{array}
$$
This last inequality makes use of \eqref{eq:psi_h} and \eqref{com_err_L1->L2-H1:I_h} together with \eqref{inv_ineq_Wnp->Wmq}
and \eqref{equiv:L2_and_L2h} to find 
$$
\|\psi^0_h\|^2_{H^1_h(\Omega)}=(u^0_h, \psi^0_h)_h\le C_{\rm com} C_{\rm inv} \|u^0_h\|_{L^1(\Omega)} \, \|\nabla\psi^0_h\|_{L^2(\Omega)}+\|u^0_h\|_{(H^1(\Omega))'} \|\psi^0_h\|_{H^1_h(\Omega)},
$$
from which 
$$
\|\psi^0_h\|_{H^1_h(\Omega)}\le C_{\rm com}C_{\rm inv}\|u^0_h\|_{L^1(\Omega)}+\|u^0_h\|_{(H^1(\Omega)'}. 
$$
\end{proof}
\begin{lemma} There is a constant $C_2=C_2(v_0)>0$, independent of $(h,k)$, such that 
\begin{equation}\label{bnd:v^n_h} 
\max_{n\in\{0,\cdots, N\}} \|v^n_h\|^2_h +k\sum_{n=0}^{N-1} (k^2 \|\delta_t  v^{n+1}_h\|_h^2+ \|\nabla v^{n+1}_h\|^2_{L^2(\Omega)})\le C_2.
\end{equation}
\end{lemma}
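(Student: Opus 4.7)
The plan is to carry out the standard discrete energy estimate for $v^{n+1}_h$, exploiting the fact that the coupling term acts with the correct sign thanks to the nonnegativity and positivity bounds already established in \eqref{posit:u^n_h} and \eqref{Positivity_and_DMP:v^n_h}.

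First, I would test $\eqref{Alg}_2$ with $\bar v_h = v^{n+1}_h$. The time-derivative term is treated by the usual discrete identity
$$
2k\,(\delta_t v^{n+1}_h, v^{n+1}_h)_h = \|v^{n+1}_h\|_h^2 - \|v^n_h\|_h^2 + \|v^{n+1}_h - v^n_h\|_h^2,
$$
and I would rewrite $\|v^{n+1}_h - v^n_h\|_h^2 = k^2\|\delta_t v^{n+1}_h\|_h^2$ to match the form of the desired estimate. The diffusion term directly yields $2k\|\nabla v^{n+1}_h\|_{L^2(\Omega)}^2$.

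Second, the reaction term $2k\,(v^{n+1}_h u^{n+1}_h, v^{n+1}_h)_h$ is discarded: writing it out nodally as
$$
2k\sum_{i\in I} u^{n+1}_h(\a_i)\bigl(v^{n+1}_h(\a_i)\bigr)^2 \|\varphi_{\a_i}\|_{L^1(\Omega)},
$$
it is nonnegative in view of \eqref{posit:u^n_h} and \eqref{Positivity_and_DMP:v^n_h}. Thus one obtains
$$
\|v^{n+1}_h\|_h^2 - \|v^n_h\|_h^2 + k^2\|\delta_t v^{n+1}_h\|_h^2 + 2k\,\|\nabla v^{n+1}_h\|_{L^2(\Omega)}^2 \le 0.
$$

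Third, I would sum this inequality from $n=0$ to $m-1$ for any $m\in\{1,\dots,N\}$, take the maximum in $m$, and control $\|v^0_h\|_h$ by the initial data: by \eqref{equiv:L2_and_L2h} and \eqref{prop:v_0h},
$$
\|v^0_h\|_h^2 \le C_{\rm eq}^2 \|v_{0h}\|_{L^2(\Omega)}^2 \le C_{\rm eq}^2 |\Omega|\,\|v_0\|_{L^\infty(\Omega)}^2.
$$
The factor $k$ pulled outside the sum containing $k^2\|\delta_t v^{n+1}_h\|_h^2$ is absorbed by the trivial bound $k\le T$, so that $k\sum_{n=0}^{N-1} k^2\|\delta_t v^{n+1}_h\|_h^2 \le T\,\|v^0_h\|_h^2$, yielding \eqref{bnd:v^n_h} with a constant $C_2$ depending only on $v_0$ (and on $T$, $|\Omega|$, $C_{\rm eq}$).

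There is no genuine obstacle here: the positivity and maximum principle obtained in the previous section already guarantee the sign of the nonlinear term, so the estimate follows by testing with $v^{n+1}_h$ and no Grönwall-type argument is required.
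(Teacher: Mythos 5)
Your proposal is correct and follows essentially the same route as the paper: testing $\eqref{Alg}_2$ with $\bar v_h=v^{n+1}_h$, using the discrete energy identity, dropping the nonnegative reaction term (the paper writes it as $\|\sqrt{u^{n+1}_h}\,v^{n+1}_h\|_h^2$, which is nonnegative by \eqref{posit:u^n_h}), and summing over $n$. Your additional remarks on bounding $\|v^0_h\|_h$ via \eqref{equiv:L2_and_L2h} and \eqref{prop:v_0h} and absorbing the extra factor $k\le T$ are consistent with what the paper leaves implicit.
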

\begin{proof}
For $\bar v_h=v^{n+1}_h$ in $\eqref{Alg}_2$, we find
$$
\|v^{n+1}_h\|^2_h-\|v^n_h\|^2_h+\|v^{n+1}_h-v^n_h\|^2_h+\|\nabla v^{n+1}_h\|^2_{L^2(\Omega)}+\|\sqrt{u^{n+1}_h}v^{n+1}_h\|^2_h=0.
$$
Summing over $n$ completes the proof. 
\end{proof}
\begin{lemma} There are two constants $C_3=C_3(u_0, v_0, T)>0$ and $C_4=C_4(u_0, v_0, T)>0$, independent of $(h,k)$, such that 
\begin{equation}\label{bnd: log v^n_h in Linf-L^1}
\max_{n\in\{0,\cdots, N\} }- \int_\Omega \mathcal{I}_h[\log v^n_h]\,{\rm d}\x \le C_3
\end{equation}
and
\begin{equation}\label{bnd: log v^n_n in L2H1}
k\sum_{n=0}^{N-1} \|\nabla\mathcal{I}_h\log v^{n+1}_h\|_{L^2(\Omega)}\le C_4.
\end{equation}

\end{lemma}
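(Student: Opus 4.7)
The natural test function in $\eqref{Alg}_2$ is $\bar v_h = -\mathcal{I}_h[1/v^{n+1}_h]$, which lies in $X_h$ thanks to the strict nodal positivity of $v^{n+1}_h$ from \eqref{Positivity_and_DMP:v^n_h}. The strategy is to bound the three resulting terms so that, together, they reproduce a discrete analogue of the entropy-type inequality
$\partial_t\bigl(-\int\log v\bigr)+\int|\nabla\log v|^2 \le \int u$
formally associated with the continuous equation for $v$.

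\textbf{Term-by-term estimates.} Mass lumping reduces the time-derivative and reaction inner products to nodal sums. For the time derivative, the scalar bound $\log b - \log a \ge 1 - a/b$ (equivalent to $\log s \le s-1$), applied nodewise with $a = v^n_h(\a_i)$, $b = v^{n+1}_h(\a_i)$, yields
$$
(\delta_t v^{n+1}_h, -\mathcal{I}_h[1/v^{n+1}_h])_h \;\ge\; \delta_t\Big(-\int_\Omega \mathcal{I}_h[\log v^{n+1}_h]\dx\Big).
$$
For the reaction term, the pointwise cancellation $v^{n+1}_h(\a_i)\cdot 1/v^{n+1}_h(\a_i)=1$ combined with \eqref{mass_conv:u^n_h} gives
$$
(v^{n+1}_h u^{n+1}_h, -\mathcal{I}_h[1/v^{n+1}_h])_h = -\|u^{n+1}_h\|_{L^1(\Omega)} = -\|u_{0h}\|_{L^1(\Omega)}.
$$
The decisive step is the discrete chain rule
$$
(\nabla v^{n+1}_h, -\nabla\mathcal{I}_h[1/v^{n+1}_h]) \;\ge\; \|\nabla\mathcal{I}_h[\log v^{n+1}_h]\|^2_{L^2(\Omega)},
$$
which I will prove element by element. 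Setting $k^\sigma_{ij} := \int_\sigma \nabla\varphi_{\a_i}\cdot\nabla\varphi_{\a_j}$, the identity $\sum_i\nabla\varphi_{\a_i}\equiv 0$ gives, for any $w_h, z_h \in X_h$,
$$
\int_\sigma \nabla w_h \cdot \nabla z_h = \tfrac{1}{2}\sum_{i\ne j}(-k^\sigma_{ij})(w_h(\a_i)-w_h(\a_j))(z_h(\a_i)-z_h(\a_j)).
$$
Weak acuteness provides $-k^\sigma_{ij}\ge 0$, and AM-GM, through the elementary inequality $|t-1|/\sqrt{t}\ge|\log t|$ (itself equivalent to $\tfrac{1}{2}(\sqrt{t}+1/\sqrt{t})\ge 1$), implies the scalar bound $(a-b)^2/(ab)\ge(\log a-\log b)^2$ for $a,b>0$. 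Applying this with $a = v^{n+1}_h(\a_i)$, $b = v^{n+1}_h(\a_j)$ upgrades the nodal differences of $v^{n+1}_h$ and $1/v^{n+1}_h$ to squared differences of $\log v^{n+1}_h$; summing over $\sigma$ closes the diffusion estimate.

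\textbf{Conclusion.} Collecting the three bounds, multiplying by $k$, and summing for $n=0,\dots,m-1$ with any $m\le N$ produces
$$
-\int_\Omega \mathcal{I}_h[\log v^m_h]\dx + k\sum_{n=0}^{m-1}\|\nabla\mathcal{I}_h[\log v^{n+1}_h]\|^2_{L^2(\Omega)} \le -\int_\Omega \mathcal{I}_h[\log v^0_h]\dx + T\|u_{0h}\|_{L^1(\Omega)}.
$$
Under the implicit assumption $-\log v_0 \in L^1(\Omega)$ (reflected in the dependence $C_3=C_3(u_0,v_0,T)$), the initial term on the right is controlled uniformly in $h$ via the nodal positivity \eqref{post-nonneg:Q_h} and the $L^1$-stability \eqref{sta_Lp:Q_h} of $\mathcal{Q}_h$. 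Taking the maximum over $m$ delivers \eqref{bnd: log v^n_h in Linf-L^1}, while the summation term delivers \eqref{bnd: log v^n_n in L2H1}.

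\textbf{Main obstacle.} The discrete chain rule for the diffusion term is the delicate ingredient: it simultaneously requires weak acuteness (to secure the sign $-k^\sigma_{ij}\ge 0$ edge by edge), the sharp AM-GM bound $(a-b)^2/(ab)\ge(\log a-\log b)^2$, and mass lumping (without which the nodal cancellations for the time and reaction terms would produce uncontrolled commutator contributions). Each of these three ingredients is essential, and none of them has a direct analogue in the continuous chain rule.
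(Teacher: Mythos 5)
Your proposal is correct and follows essentially the same route as the paper: test $\eqref{Alg}_2$ with $\pm\mathcal{I}_h[1/v^{n+1}_h]$, treat the time term by convexity of $-\log$ (the paper uses Taylor's formula, keeping the nonnegative remainder), use weak acuteness together with the scalar bound $(\log a-\log b)^2\le (a-b)^2/(ab)$ for the discrete chain rule on the diffusion term, and cancel the reaction term against the conserved mass \eqref{mass_conv:u^n_h}. The only detail you leave implicit is that to isolate the gradient sum one must bound $\int_\Omega \mathcal{I}_h[\log v^m_h]\,{\rm d}\x$ from above by $|\Omega|\log\|v_{0h}\|_{L^\infty(\Omega)}$ via the discrete maximum principle \eqref{Positivity_and_DMP:v^n_h}, exactly as the paper does.
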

\begin{proof}
Let us now test  $\eqref{Alg}_2$ against $\bar v_h=\mathcal{I}_h[\frac{1}{v^{n+1}_h}]$, which is well-posed from \eqref{Positivity_and_DMP:v^n_h}, to obtain
\begin{equation}\label{lm8:lab1}
(\delta_t v^{n+1}_h, \frac{1}{v^{n+1}_h})_h+(\nabla v^{n+1}_h, \nabla\mathcal{I}_h[\frac{1}{v^{n+1}_h}])+\|u^{n+1}_h\|_{L^1(\Omega)}=0.
\end{equation} 
We first note using Taylor's formula that 
$$
\log v^n_h-\log v^{n+1}_h=\frac{1}{v^{n+1}_h} (v^n_h-v^{n+1}_h) - \frac{1}{(v^{n+\theta}_h)^2} (v^n_h-v^{n+1}_h)^2,
$$  
which implies 
\begin{equation}\label{lm8:lab2}
\frac{1}{v^{n+1}_h} (v^{n+1}_h-v^n_h)= \log v^{n+1}_h-\log v^n_h - \frac{1}{(v^{n+\theta}_h)^2} (v^n_h-v^{n+1}_h)^2,
\end{equation}
where $v^{n+\theta}_h=\theta v^{n+1}_h+(1-\theta) v^{n+1}_h$ with $\theta\in (0,1)$. Secondly, we write and note, from the acuteness of $\mathcal{T}_h$, that  
$$
(\nabla v^{n+1}_h, \nabla\mathcal{I}_h[\frac{1}{v^{n+1}_h}])=\sum_{i<j\in I}\frac{(v^{n+1}_h(\a_j)- v^{n+1}_h(\a_j))^2 }{v^{n+1}_h(\a_i) v^{n+1}_h(\a_j)} (\nabla\varphi_{\a_j}, \nabla\varphi_{\a_j})\le0.
$$
Further, the inequality $(\log x -\log y)^2\le \frac{(x-y)^2}{ x y} $ for $x>0$ and $y>0$ in \cite[Prop 3.7]{GS_2025} gives  
\begin{equation}\label{lm8:lab3}
-(\nabla v^{n+1}_h, \nabla\mathcal{I}_h[\frac{1}{v^{n+1}_h}])\ge \|\nabla\mathcal{I}_h[\log v^{n+1}_h]\|^2_{L^2(\Omega)}.
\end{equation}
Thus, combining \eqref{lm8:lab2} and \eqref{lm8:lab3} into \eqref{lm8:lab1} leads to  
$$
\begin{array}{rcl}
\displaystyle
-\int_\Omega\mathcal{I}_h[\log v^{n+1}_h]\, {\rm d}\x&+&\displaystyle k \|\frac{1}{v^{n+\theta}_h}\delta_t v^{n+1}_h\|_h^2
\\
&+&\displaystyle
k\|\nabla\mathcal{I}_h \log v^{n+1}_h\|^2_{L^2(\Omega)}\le -\int_\Omega \mathcal{I}_h[\log v^n_h]\,{\rm d}\x
+k \|u_{0h}\|_{L^1(\Omega)},
\end{array}
$$
where we used \eqref{mass_conv:u^n_h}. Hence, from the fact that
$$
\begin{array}{rcl}
\displaystyle
\int_\Omega \mathcal{I}_h[\log v^{n+1}_h]\,{\rm d}\x&=&\displaystyle \sum_{i\in I} \log v^{n+1}_h(\a_i)\|\varphi_{\a_i}\|_{L^1(\Omega)}
\\
&\le&\displaystyle \sum_{i\in I} \log \|v_{0h}\|_{L^\infty(\Omega)}\|\varphi_{\a_i}\|_{L^1(\Omega)}
\\
&=&
|\Omega| \log\|v_{0h}\|_{L^\infty(\Omega)},
\end{array}
$$  
due to \eqref{Positivity_and_DMP:v^n_h}, we infer, after summing over $n=0, \cdots, N-1$, that
$$
\begin{array}{rcl}
\displaystyle
k\sum_{n=0}^{N-1} \|\nabla\mathcal{I}_h\log v^{n+1}_h\|_{L^2(\Omega)}&\le&\displaystyle
T \|u_{0h}\|_{L^1(\Omega)}-\int_\Omega \mathcal{I}_h[\log v_{0h}]\,{\rm d}\x+ |\Omega| \log \|v_{0h}\|_{L^\infty(\Omega)}
\\
&:=&C_3
\end{array}
$$ 
and, furthermore, that 
$$
\displaystyle
\max_{n\in\{0,\cdots, N\} } - \int_\Omega \mathcal{I}_h[\log v^n_h]\,{\rm d}\x \le T \|u_{0h}\|_{L^1(\Omega)}-\int_\Omega\mathcal{I}_h[\log v_{0h}]\,{\rm d}\x:=C_4. 
$$
This completes the proof of the lemma. 
\end{proof}

\begin{lemma} Let $p>0$. Then there exists a constant $C_5=C_5(u_0, v_0, p, T)$, independent of $(h,k)$, such that
\begin{equation}\label{bnd:(inv_v^n+1_h)^p in L1logL1}
k\sum_{n=0}^{N-1}\left[\log \|\mathcal{I}_h[(v^{n+1}_h)^{-p}]\|_{L^1(\Omega)}\right]_+\le C_5.
\end{equation}
\end{lemma}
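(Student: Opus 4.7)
The strategy is to reduce the claim to a two-dimensional Moser--Trudinger-type inequality applied to the piecewise linear function $f_h:=-p\,\mathcal{I}_h[\log v^{n+1}_h]\in X_h$, which is well defined thanks to the positivity \eqref{Positivity_and_DMP:v^n_h}. The starting point is the identity
$$\|\mathcal{I}_h[(v^{n+1}_h)^{-p}]\|_{L^1(\Omega)}=\int_\Omega \mathcal{I}_h[e^{f_h}]\,\mathrm{d}\boldsymbol{x},$$
which holds because both sides equal $\sum_{i\in I}(v^{n+1}_h(\boldsymbol{a}_i))^{-p}\,\|\varphi_{\boldsymbol{a}_i}\|_{L^1(\Omega)}$.

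I would then compare $\int_\Omega\mathcal{I}_h[e^{f_h}]$ with $\int_\Omega e^{f_h}$: since $f_h$ is linear on each simplex $\sigma$, $\nabla^2 e^{f_h}=e^{f_h}\,\nabla f_h\otimes\nabla f_h$, and \eqref{err_L1-W21:I_h} together with the inverse bound $h\,\|\nabla f_h\|_{L^\infty(\Omega)}\le C\|\nabla f_h\|_{L^2(\Omega)}$, which follows from \eqref{inv_ineq_Wnp->Wmq} in dimension two on the quasi-uniform mesh, yield
$$\int_\Omega \mathcal{I}_h[e^{f_h}]\le \bigl(1+C\|\nabla f_h\|^2_{L^2(\Omega)}\bigr)\int_\Omega e^{f_h}.$$
Since $f_h\in X_h\subset H^1(\Omega)$ and $\Omega\subset\mathds{R}^2$, the Moser--Trudinger (Onofri-type) inequality gives
$$\log\int_\Omega e^{f_h}\,\mathrm{d}\boldsymbol{x}\le \bar f_h + C\|\nabla f_h\|^2_{L^2(\Omega)}+C',\qquad \bar f_h:=\frac{1}{|\Omega|}\int_\Omega f_h\,\mathrm{d}\boldsymbol{x}.$$
Combining the two displays and using $\log(1+x)\le x$ for $x\ge 0$ produces
$$\log\|\mathcal{I}_h[(v^{n+1}_h)^{-p}]\|_{L^1(\Omega)}\le \bar f_h + C''\|\nabla f_h\|^2_{L^2(\Omega)}+C'.$$

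To conclude, I would invoke \eqref{bnd: log v^n_h in Linf-L^1} to obtain $\bar f_h\le pC_3/|\Omega|$. Taking positive parts, multiplying by $k$, and summing over $n=0,\ldots,N-1$, the two constant contributions are bounded by $T(pC_3/|\Omega|+C')$, while $k\sum_{n=0}^{N-1}\|\nabla f_h\|^2_{L^2(\Omega)}=p^2\,k\sum_{n=0}^{N-1}\|\nabla \mathcal{I}_h[\log v^{n+1}_h]\|^2_{L^2(\Omega)}$ is controlled by $p^2 C_4$ via \eqref{bnd: log v^n_n in L2H1}, delivering the claim with $C_5=C_5(u_0,v_0,p,T)$. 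The main obstacle is the Moser--Trudinger inequality, whose validity is confined to spatial dimension two; a crucial feature is that its remainder $C\|\nabla f_h\|^2_{L^2}$ scales exactly like the interpolation-induced factor $1+C\|\nabla f_h\|^2_{L^2}$, so that the discrete-to-continuous comparison can be absorbed rather than destroying the estimate.
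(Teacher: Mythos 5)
Your proof is correct, and it reaches the estimate by a genuinely different route than the paper. The paper invokes a ready-made \emph{discrete} Moser--Trudinger inequality on polygonal domains (inequality \eqref{ineq:M-T}, cited from the literature), which requires a nonnegative argument; for this reason it applies it to the normalized function $-p\log\bigl(v^{n+1}_h/\|v_{0h}\|_{L^\infty(\Omega)}\bigr)\ge 0$ and then unwinds the normalization, before concluding with the bounds $C_3$ and $C_4$. You instead manufacture the discrete inequality yourself: starting from the exact identity $\|\mathcal{I}_h[(v^{n+1}_h)^{-p}]\|_{L^1(\Omega)}=\int_\Omega\mathcal{I}_h[e^{f_h}]$ with $f_h=-p\,\mathcal{I}_h[\log v^{n+1}_h]$, you control the interpolation defect through \eqref{err_L1-W21:I_h}, the elementwise identity $\nabla^2 e^{f_h}=e^{f_h}\,\nabla f_h\otimes\nabla f_h$, and the inverse estimate \eqref{inv_ineq_Wnp->Wmq}, obtaining the multiplicative factor $1+C\|\nabla f_h\|^2_{L^2(\Omega)}$, and then apply the classical continuous Moser--Trudinger/Onofri inequality on a bounded Lipschitz planar domain in its mean-value form, which conveniently does not require $f_h\ge 0$ and so dispenses with the normalization step. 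What the paper's approach buys is brevity (the delicate $h$-uniformity of the exponential inequality is outsourced to the cited reference); what yours buys is self-containedness modulo a standard continuous inequality, essentially re-deriving the discrete Moser--Trudinger bound in the special case needed here, with the correct observation that the interpolation-induced factor has exactly the same $\|\nabla f_h\|^2_{L^2}$ scaling as the Onofri remainder and is therefore harmless after $\log(1+x)\le x$. The concluding summation is the same in both arguments, resting on \eqref{bnd: log v^n_h in Linf-L^1} and \eqref{bnd: log v^n_n in L2H1}; note that, like the paper's own proof, you use the latter bound for the \emph{squared} gradient norms, which is what the proof of that lemma actually delivers (the displayed statement omits the square).
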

\begin{proof}
We recall from \cite{Bonilla_GS_2024} that, for each $\varphi\in C(\bar\Omega)$ with $\varphi\ge 0$, there holds the following Moser-Trudinger discrete functional inequality for polygonal domains: 
\begin{equation}\label{ineq:M-T}
\int_\Omega \mathcal{I}_h e^{\mathcal{I}_h\varphi(\x)}\, {\rm d}\x\le c_1 (1+\|\nabla \mathcal{I}_h\varphi\|^2_{L^2(\Omega)}) e^{c_2 \|\nabla \mathcal{I}_h\varphi\|^2_{L^2(\Omega)}+ c_2\|\mathcal{I}_h\varphi\|_{L^1(\Omega)}},
\end{equation}
with $c_1, c_2$ being two positive constants independent of $h$.

Let 
$$
\mathscr{N}_{h,k}=\{n\in\{0,\cdots, N-1\}\,:\, \int_\Omega \mathcal{I}_h[(v^{n+1}_h)^{-p}]\, {\rm d}\x>1\}.
$$
Then, write 
\begin{equation}\label{lm9:lab1}
k\sum_{n=0}^{N-1}\left[\log \int_\Omega \mathcal{I}_h[(v^{n+1}_h)^{-p}]\, {\rm d}\x \right]_+= k\sum_{n\in\mathscr{N}_{h,k}}\left(\log \int_\Omega \mathcal{I}_h[(v^{n+1}_h)^{-p}]\, {\rm d}\x \right).
\end{equation}
In order to obtain an estimate for \eqref{lm9:lab1}, we take $\varphi= - p \log\frac{v^{n+1}_h}{\|v_{0h}\|_{L^\infty(\Omega)}}$, being non-negative from \eqref{Positivity_and_DMP:v^n_h}, in \eqref{ineq:M-T} to get, by
$$
- p\mathcal{I}_h\log \frac{v^{n+1}_h}{\|v_{0h}\|_{L^\infty(\Omega)}}=-p \mathcal{I}_h\log v^{n+1}_h+p \log \|v_{0h}\|_{L^\infty(\Omega)},
$$
that, for all $n\in\mathscr{N}_{h,k}$,   
\begin{align*}
\log \int_\Omega \mathcal{I}_h e^{- p \mathcal{I}_h\log \frac{v^{n+1}_h}{\|v_{0h}\|_{L^\infty(\Omega)}}} {\rm d}\x = \log  \int_\Omega \mathcal{I}_h[\frac{1}{(v^{n+1}_h)^p}] {\rm d}\x+ p \log \|v_{0h}\|_{L^\infty(\Omega)}
\\
\le \log c_1 +\log (1+p^2 \|\nabla\mathcal{I}_h[\log\frac{v^{n+1}_h}{\|v_{0h}\|_{L^\infty(\Omega)}}]\|^2_{L^2(\Omega)}) 
\\
+c_2 p^2 \|\nabla \mathcal{I}_h[\log\frac{v^{n+1}_h}{\|v_{0h}\|_{L^\infty(\Omega)}}]\|^2_{L^2(\Omega)}+ c_2 p\|-\mathcal{I}_h[\log\frac{v^{n+1}_h}{\|v_{0h}\|_{L^\infty(\Omega)}}]\|_{L^1(\Omega)}.
\end{align*}
It is straightforward to see that 
$$
\|\nabla\mathcal{I}_h\log\frac{v^{n+1}_h}{\|v_{0h}\|_{L^\infty(\Omega)}}\|^2_{L^2(\Omega)}=\|\nabla\mathcal{I}_h\log v^{n+1}_h\|^2_{L^2(\Omega)}
$$
and
$$
\begin{array}{rcl}
\displaystyle
\|-\mathcal{I}_h[\log\frac{v^{n+1}_h}{\|v_{0h}\|_{L^\infty(\Omega)}}]\|_{L^1(\Omega)}&=&\displaystyle\sum_{i\in I} -\log\frac{v^{n+1}_h(\a_i)}{\|v_{0h}\|_{L^\infty(\Omega)}} \|\varphi_{\a_i}\|_{L^1(\Omega)}
\\
&=&\displaystyle\sum_{i\in I} \Big(-\log v^{n+1}_h(\a_i) + \log \|v_{0h}\|_{L^\infty(\Omega)} \Big) \|\varphi_{\a_i}\|_{L^1(\Omega)}
\\
&=&\displaystyle
\log \|v_{0h}\|_{L^\infty(\Omega)} |\Omega| - \int_\Omega\mathcal{I}_h[\log v^{n+1}_h]\,{\rm d}\x.
\end{array}
$$
Therefore,
$$
\begin{array}{rcl}
\log\|\mathcal{I}_h[\frac{1}{(v^{n+1}_h)^p}]\|_{L^1(\Omega)}&\le&\log c_1 +  (1+ c_2) p^2(1+\|\nabla\mathcal{I}_h[\log v^{n+1}_h]\|^2_{L^2(\Omega)} )
\\
&&\displaystyle
- c_2 p \int_\Omega\mathcal{I}_h[\log v^{n+1}_h]\,{\rm d}\x+ p(c_2|\Omega|-1)\log\|v_{0h}\|_{L^\infty(\Omega)}.
\end{array}
$$
Next, multiplying through by $k$ and summing for $n=0, \cdots, N-1$,  we have, from \eqref{bnd: log v^n_h in Linf-L^1} and \eqref{bnd: log v^n_n in L2H1}, that
$$
\begin{array}{rcl}
\displaystyle
k \sum_{n=0}^{N-1} \Big[\log\|\mathcal{I}_h[\frac{1}{(v^{n+1}_h)^p}]\|_{L^1(\Omega)}\Big]_+&\le&\log c_1+ (1+c_2)p^2 (T+ C_4)+c_2 p C_3
\\
&&+ p(c_2|\Omega|-1)\log\|v_{0h}\|_{L^\infty(\Omega)}:=C_5.
\end{array}
$$
\end{proof}
\begin{corollary} It follows that 
\begin{equation}\label{bnd: (v_h^n)^alpha/2 u_n^n+1 in L2L2}
k\sum_{n=0}^{N-1} \|(v_h^n)^\frac{\alpha}{2} u_h^{n+1}\|^2_h\le \frac{C_1}{\tilde C}. 
\end{equation}

\end{corollary}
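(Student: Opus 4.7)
The strategy is to convert the bound \eqref{bnd:Phi(v^n_h)u^(n+1)_h in L2L2h} on $(\Phi(v^n_h) u^{n+1}_h, u^{n+1}_h)_h$ into the desired bound by dominating $s^\alpha$ by $\Phi(s)$ on the relevant range, and then reading the result off at the nodes via the mass-lumped inner product.

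First, I would unpack the degeneracy assumption \eqref{cond:Phi}. By $\liminf_{s \to 0^+} \Phi(s)/s^\alpha > 0$, there exist $\delta > 0$ and $\tilde c > 0$ with $\Phi(s) \ge \tilde c \, s^\alpha$ for all $s \in (0, \delta]$. On the compact interval $[\delta, \|v_{0h}\|_{L^\infty(\Omega)}]$, the continuous positive function $\Phi$ is bounded below by some $m > 0$, while $s^\alpha$ is bounded above by $\|v_{0h}\|_{L^\infty(\Omega)}^\alpha$. Setting
$$
\tilde C := \min\!\left\{\tilde c,\ \frac{m}{\|v_{0h}\|_{L^\infty(\Omega)}^\alpha}\right\} > 0,
$$
one obtains $\Phi(s) \ge \tilde C s^\alpha$ for every $s \in (0, \|v_{0h}\|_{L^\infty(\Omega)}]$.

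Next, I would invoke \eqref{Positivity_and_DMP:v^n_h}, which guarantees $0 < v^n_h(\a_i) \le \|v_{0h}\|_{L^\infty(\Omega)}$ at every node, so the pointwise estimate $\Phi(v^n_h(\a_i)) \ge \tilde C (v^n_h(\a_i))^\alpha$ is valid at all nodes $\a_i$. Multiplying by $(u^{n+1}_h(\a_i))^2 \|\varphi_{\a_i}\|_{L^1(\Omega)} \ge 0$ and summing in $i$ produces, via the nodal expression of the discrete inner product,
$$
(\Phi(v^n_h) u^{n+1}_h, u^{n+1}_h)_h \;\ge\; \tilde C \sum_{i \in I} (v^n_h(\a_i))^\alpha (u^{n+1}_h(\a_i))^2 \|\varphi_{\a_i}\|_{L^1(\Omega)} \;=\; \tilde C \, \|(v^n_h)^{\alpha/2} u^{n+1}_h\|_h^2.
$$

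Finally, multiplying by $k$, summing for $n = 0, \dots, N-1$, and using the uniform bound \eqref{bnd:Phi(v^n_h)u^(n+1)_h in L2L2h}, which controls $k \sum_{n=0}^{N-1} (\Phi(v^n_h) u^{n+1}_h, u^{n+1}_h)_h$ by $C_1$, yields the claim. No step here is delicate: the main content is the reduction $\Phi(s) \ge \tilde C s^\alpha$ on $(0, \|v_{0h}\|_{L^\infty(\Omega)}]$, which requires combining the degeneracy hypothesis near $s = 0$ with the continuity and strict positivity of $\Phi$ on the remaining compact subinterval allowed by the discrete maximum principle.
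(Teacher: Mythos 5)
Your proposal is correct and takes essentially the same route as the paper: the degeneracy condition \eqref{cond:Phi} combined with the nodal bounds \eqref{Positivity_and_DMP:v^n_h} gives $\Phi(v^n_h)\ge \tilde C\,(v^n_h)^\alpha$, after which the dual estimate \eqref{bnd:Phi(v^n_h)u^(n+1)_h in L2L2h} yields the claim via the nodal form of $(\cdot,\cdot)_h$. The only refinement worth making is to take the compact interval up to $\|v_0\|_{L^\infty(\Omega)}$ rather than $\|v_{0h}\|_{L^\infty(\Omega)}$ when defining $\tilde C$, so that its independence of $(h,k)$ is immediate from $\|v_{0h}\|_{L^\infty(\Omega)}\le\|v_0\|_{L^\infty(\Omega)}$.
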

\begin{proof}
We have, by \eqref{cond:Phi} and \eqref{Positivity_and_DMP:v^n_h}, that there exist $\tilde C>0$, independent of $(h,k)$, such that, for all  $ n\in\{0,\cdots, N\}$,
$$
\Phi(v^n_h)\ge \tilde C (v^n_h)^\alpha\quad\mbox{ in } \Omega . 
$$
Consequently, in view of \eqref{bnd:Phi(v^n_h)u^(n+1)_h in L2L2h}, we easily find that \eqref{bnd: (v_h^n)^alpha/2 u_n^n+1 in L2L2} holds.

\end{proof}

\section{Weak and strong precompactness properties}\label{sec:compactness}
Recalling definitions \eqref{def:w_hk_tilde}, \eqref{def:w_hk_hat}, and \eqref{def:w_hk}, we state the following weak convergences.

\begin{lemma}
The sequence
\begin{equation}\label{bnd_Linf:v_hk_tilde}
\{\widetilde v_{h,k}\}_{h,k > 0} \quad \text{is bounded in} \quad L^\infty(\Omega_T),
\end{equation}
and 
\begin{equation}\label{bnd:v_hk_hat in L2H1}
\{\widehat v_{h,k}\}_{h,k > 0} \quad \text{is bounded in} \quad L^\infty(\Omega_T) \cap L^2(0,T; H^1(\Omega)),
\end{equation}
and therefore admits a subsequence (still denoted by $\{v_{h,k}\}_{h,k > 0}$) that converges to a limit function 
$v \in L^\infty(\Omega_T) \cap L^2(0,T; H^1(\Omega))$. Specifically, as $h, k \to 0$, we have
\begin{equation}\label{w_conv_L2H1:vh->v}
\widehat v_{h,k} \rightharpoonup v \quad \text{in} \quad L^2(0,T; H^1(\Omega)).
\end{equation}
\end{lemma}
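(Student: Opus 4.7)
The plan is straightforward since all the ingredients are already in place from the nodal bounds in Section \ref{sec:nodal_bnds} and the a priori estimates in Section \ref{sec:a_priori_bnds}. I would proceed in three short steps: first obtain the $L^\infty(\Omega_T)$ bounds, then deduce the $L^2(0,T;H^1(\Omega))$ bound, and finally invoke reflexivity to extract a weakly convergent subsequence.

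First, for the $L^\infty(\Omega_T)$ bounds on $\widetilde v_{h,k}$ and $\widehat v_{h,k}$: by definitions \eqref{def:w_hk_tilde} and \eqref{def:w_hk_hat}, these functions are piecewise constant in time, taking respectively the nodal values $v_h^n$ and $v_h^{n+1}$ on each subinterval $[t_n,t_{n+1})$. Hence their $L^\infty(\Omega_T)$-norms coincide with $\max_{0\le n\le N}\|v_h^n\|_{L^\infty(\Omega)}$, which by the discrete maximum principle \eqref{Positivity_and_DMP:v^n_h} is controlled by $\|v_{0h}\|_{L^\infty(\Omega)}$, and in turn by $\|v_0\|_{L^\infty(\Omega)}$ via the stability estimate \eqref{prop:v_0h}. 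This yields \eqref{bnd_Linf:v_hk_tilde} and the $L^\infty$ component of \eqref{bnd:v_hk_hat in L2H1}, both uniformly in $(h,k)$.

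Second, for the $L^2(0,T;H^1(\Omega))$ bound on $\widehat v_{h,k}$, since $\widehat v_{h,k}(t)\equiv v_h^{n+1}$ on $[t_n,t_{n+1})$, one has
$$
\int_0^T \|\widehat v_{h,k}(t)\|_{H^1(\Omega)}^2\,{\rm d}t = k\sum_{n=0}^{N-1}\bigl(\|v_h^{n+1}\|_{L^2(\Omega)}^2 + \|\nabla v_h^{n+1}\|_{L^2(\Omega)}^2\bigr).
$$
The first summand is bounded by $T|\Omega|\,\|v_0\|_{L^\infty(\Omega)}^2$ thanks to the $L^\infty$ bound just obtained, while the gradient summand is controlled by the constant $C_2$ from the a priori estimate \eqref{bnd:v^n_h}.

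Finally, since $L^2(0,T;H^1(\Omega))$ is a reflexive Banach space, the uniform boundedness just established allows extraction of a subsequence (not relabeled) such that $\widehat v_{h,k}\rightharpoonup v$ weakly in $L^2(0,T;H^1(\Omega))$, giving \eqref{w_conv_L2H1:vh->v}; weak-$\ast$ lower semicontinuity of the $L^\infty(\Omega_T)$-norm ensures that the limit $v$ also lies in $L^\infty(\Omega_T)$. There is essentially no obstacle in this lemma: the real difficulty in this section is to promote the weak convergence of $\widehat v_{h,k}$ to a strong convergence (needed later to pass to the limit in the nonlinear terms $\Phi(v)u$ and $uv$), which will presumably rely on Aubin--Lions-type arguments combined with a dual estimate for $\delta_t v_h^{n+1}$; the present lemma merely sets the stage for that.
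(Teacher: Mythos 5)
Your proposal is correct and fills in, with the same ingredients, exactly what the paper's (very terse) proof asserts: the discrete maximum principle \eqref{Positivity_and_DMP:v^n_h} together with \eqref{prop:v_0h} gives the uniform $L^\infty(\Omega_T)$ bounds, the a priori estimate \eqref{bnd:v^n_h} gives the gradient bound, and weak compactness of bounded sets in $L^2(0,T;H^1(\Omega))$ yields \eqref{w_conv_L2H1:vh->v}. No substantive difference from the paper's argument.
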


\begin{proof} It is easy to see from bounds \eqref{Positivity_and_DMP:v^n_h} and \eqref{bnd:v^n_h} that \eqref{bnd_Linf:v_hk_tilde} and \eqref{bnd:v_hk_hat in L2H1} hold. Consequently, this implies \eqref{w_conv_L2H1:vh->v} from the weak topology on $L^2(0,T; H^1(\Omega))$. 
\end{proof}

The weak convergence of the sequence $\{\widehat u_{h,k}\}_{h,k>0}$ requires more than mere boundedness in $L^1(\Omega_T)$ from \eqref{mass_conv:u^n_h}; it involves a more refined analysis that makes use of the Dunford–Pettis theorem on bounded domains.
\begin{lemma}\label{lm:equi-integrability_u_hk}
The sequence
\begin{equation}\label{bnd:u_hk L1L1}
\{\widehat u_{h,k}\}_{h,k > 0} \quad \text{is bounded in} \quad L^\infty(0,T; L^1(\Omega)).
\end{equation}
Furthermore, there exists a subsequence of $\{\widehat u_{h,k}\}_{h,k > 0}$ (not relabeled) and a function $u \in L^1(\Omega_T)$ such that
\begin{equation}\label{w_conv_L1L1:u_hk->u}
\widehat u_{h,k} \rightharpoonup u \quad \text{in} \quad L^1(\Omega_T)\quad\mbox{ as }\quad h, k \to 0.
\end{equation}

\end{lemma}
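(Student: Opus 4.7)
The first assertion \eqref{bnd:u_hk L1L1} is immediate: testing $\eqref{Alg}_1$ with $\bar u_h=1$ gives mass conservation \eqref{mass_conv:u^n_h}, and combined with the $L^1$-stability of the initialization in \eqref{prop:u_0h} this yields $\|\widehat u_{h,k}(t)\|_{L^1(\Omega)}=\|u_{0h}\|_{L^1(\Omega)}\le\|u_0\|_{L^1(\Omega)}$ uniformly in $(h,k,t)$.

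For \eqref{w_conv_L1L1:u_hk->u} I would invoke the Dunford--Pettis theorem; since $\Omega_T$ has finite measure, the criterion reduces to equi-integrability, i.e., for every $\varepsilon>0$ one must produce $\delta>0$ such that $\sup_{h,k}\int_A\widehat u_{h,k}\,\dx\,{\rm d}t<\varepsilon$ whenever $|A|<\delta$. The plan is to interpolate between the two compactness-inducing estimates already at hand: the ``diffusion'' bound \eqref{bnd: (v_h^n)^alpha/2 u_n^n+1 in L2L2}, which after translation to genuine Lebesgue norms controls $\widetilde v_{h,k}^{\alpha/2}\widehat u_{h,k}$ in $L^2(\Omega_T)$, and the ``lower-bound'' estimate \eqref{bnd:(inv_v^n+1_h)^p in L1logL1}, which provides time-integrated logarithmic control on $\|\mathcal{I}_h[v_h^{-p}]\|_{L^1(\Omega)}$ for any $p>0$.

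Concretely, I would fix $r\in(1,2)$, set $p:=r\alpha/(2-r)$, and introduce the good time set
$$J_{h,k}(\lambda):=\Big\{t\in[0,T]:\|\mathcal{I}_h[(\widetilde v_{h,k}(t))^{-p}]\|_{L^1(\Omega)}\le\lambda\Big\}.$$
A Markov-type inequality applied to \eqref{bnd:(inv_v^n+1_h)^p in L1logL1} then gives $|J_{h,k}(\lambda)^c|\le C_5/\log\lambda$ uniformly in $(h,k)$. On $J_{h,k}(\lambda)$, pointwise-in-time Hölder with conjugate exponents $2/r$ and $2/(2-r)$ produces
$$\int_\Omega\widehat u_{h,k}(t)^r\,\dx\le\Big(\int_\Omega\widetilde v_{h,k}(t)^\alpha\,\widehat u_{h,k}(t)^2\,\dx\Big)^{r/2}\lambda^{(2-r)/2},$$
so that integrating in $t$, Hölder in time, and \eqref{bnd: (v_h^n)^alpha/2 u_n^n+1 in L2L2} together yield $\|\widehat u_{h,k}\|_{L^r(J_{h,k}(\lambda)\times\Omega)}^r\le C\lambda^{(2-r)/2}$. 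For any measurable $A\subset\Omega_T$, I would split $A=A_1\cup A_2$ with $A_1:=A\cap(J_{h,k}(\lambda)\times\Omega)$, and apply Hölder on $A_1$ while using Fubini with \eqref{mass_conv:u^n_h} on $A_2$ to obtain
$$\int_A\widehat u_{h,k}\,\dx\,{\rm d}t\le C\,|A|^{1-1/r}\lambda^{(2-r)/(2r)}+\|u_0\|_{L^1(\Omega)}\frac{C_5}{\log\lambda}.$$
Choosing $\lambda$ first large enough to force the second term below $\varepsilon/2$, and then $\delta$ small enough to force the first term below $\varepsilon/2$, closes the equi-integrability argument; Dunford--Pettis then delivers the weak limit $u\in L^1(\Omega_T)$ along a subsequence.

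The hard part, as I anticipate it, will be the mismatch between the time indices of the two key bounds: \eqref{bnd: (v_h^n)^alpha/2 u_n^n+1 in L2L2} features $v_h^n$, matching $\widetilde v_{h,k}$, whereas \eqref{bnd:(inv_v^n+1_h)^p in L1logL1} is stated for $v_h^{n+1}$, i.e., $\widehat v_{h,k}$. A telescopic shift of the summation index reduces the matter to handling a single boundary contribution $k\bigl[\log\|(v_{0h})^{-p}\|_{L^1(\Omega)}\bigr]_+$, which must be uniformly bounded in $h$; this requires either a mild lower bound on $v_{0h}$ coming from the positivity of $\mathcal{Q}_h$ in \eqref{post-nonneg:Q_h} or a direct Moser--Trudinger estimate at the initial step. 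A secondary, milder difficulty is the passage from the discrete $\|\cdot\|_h$-bounds in \eqref{bnd: (v_h^n)^alpha/2 u_n^n+1 in L2L2} to the Lebesgue-norm bound actually used in the Hölder step, which is handled through the equivalence \eqref{equiv:L2_and_L2h} combined with elementwise quadrature comparison via \eqref{com_err_L1->L1-W1inf:I_h}.
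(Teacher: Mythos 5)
Your overall skeleton coincides with the paper's: mass conservation for \eqref{bnd:u_hk L1L1}, then Dunford--Pettis via equi-integrability, obtained by splitting time into a good set where a negative power of $\widetilde v_{h,k}$ is controlled (Markov applied to the logarithmic bound \eqref{bnd:(inv_v^n+1_h)^p in L1logL1}), using the $L^\infty(L^1)$ bound on the bad times, and a weighted H\"older/Cauchy--Schwarz argument with \eqref{bnd: (v_h^n)^alpha/2 u_n^n+1 in L2L2} on the good times. However, two steps as you propose them do not go through. First, your H\"older step is written with genuine Lebesgue integrals $\int_\Omega \widetilde v_{h,k}^\alpha\,\widehat u_{h,k}^2$ and $\int_\Omega \widetilde v_{h,k}^{-p}$, whereas the available bounds are the mass-lumped ones $\|\widetilde v_{h,k}^{\alpha/2}\widehat u_{h,k}\|_h$ and $\|\mathcal{I}_h[\widetilde v_{h,k}^{-p}]\|_{L^1(\Omega)}$. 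The tools you cite for the conversion do not apply: \eqref{equiv:L2_and_L2h} is an equivalence for functions in $X_h$, not for $\widetilde v_{h,k}^{\alpha}\widehat u_{h,k}^{2}$ (a non-polynomial composite), and \eqref{com_err_L1->L1-W1inf:I_h} introduces a $\nabla\widehat u_{h,k}$ factor that is not controlled. In fact the comparison you need, $\int_\sigma \widetilde v^\alpha\widehat u^2\lesssim\sum_i\widetilde v(\a_i)^\alpha\widehat u(\a_i)^2\|\varphi_{\a_i}\|_{L^1(\sigma)}$, can fail elementwise (take $\widehat u$ large at a node where $\widetilde v$ is tiny and $\widetilde v$ large at another node of the same simplex). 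The paper avoids this entirely by running the generalized H\"older inequality at the nodal level, writing $\int_{\mathscr{S}(t)}\widehat u_{h,k}=\sum_i\widehat u_{h,k}(\a_i)\int_{\mathscr{S}(t)}\varphi_{\a_i}$ and splitting with weights $\int_{\mathscr{S}(t)}\varphi_{\a_i}$, so that only $\|\widetilde v^{\alpha/2}_{h,k}\widehat u_{h,k}\|_h$, $\|\widetilde v^{-\alpha}_{h,k}\|_h$ and $|\mathscr{S}(t)|$ appear; your argument should be rewritten in that discrete form (your exponent bookkeeping survives unchanged).

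Second, you correctly identify the index mismatch ($\widetilde v$ in \eqref{bnd: (v_h^n)^alpha/2 u_n^n+1 in L2L2} versus $\widehat v$ in \eqref{bnd:(inv_v^n+1_h)^p in L1logL1}), but your proposed remedies for the leftover boundary term $k\bigl[\log\|(v_{0h})^{-p}\|_{L^1(\Omega)}\bigr]_+$ are not available: positivity of $\mathcal{Q}_h$ in \eqref{post-nonneg:Q_h} gives only an $h$-dependent positive minimum of $v_{0h}$ (since $v_0>0$ merely a.e., with no uniform lower bound), and a Moser--Trudinger estimate at the initial step would require control of $\|\nabla\mathcal{I}_h[\log v_{0h}]\|_{L^2(\Omega)}$, which no a priori estimate provides. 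The paper's resolution is different and you should adopt it: prove equi-integrability only for $1_{(0,T)\setminus(0,k]}\widehat u_{h,k}$ (the shifted log bound covers $\widetilde v_{h,k}$ exactly on $(k,T)$, so no boundary term arises), and then dispose of the initial slab directly in the weak-convergence pairing, since for any $\varphi\in L^\infty(\Omega_T)$ one has $\int_0^k\int_\Omega\widehat u_{h,k}\varphi\le k\,\|\varphi\|_{L^\infty(\Omega_T)}\|u_{0h}\|_{L^1(\Omega)}\to0$ by \eqref{mass_conv:u^n_h}. With these two repairs your argument matches the paper's proof.
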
\begin{proof}  It follows directly from bound \eqref{mass_conv:u^n_h} that \eqref{bnd:u_hk L1L1} holds. Moreover, from \eqref{bnd:(inv_v^n+1_h)^p in L1logL1}, we obtain
\begin{equation}\label{bnd:(inv_tilde_v_hk)^p in L1logL1}
\int_k^{T}[\log \|\widetilde v_{h,k}^{-\frac{p}{2}}\|_h]_+\,{\rm d}t\le C_5.
\end{equation}

We now aim to show that the sequence $\{1_{(0,T)\setminus(0,k]} \widehat u_{h,k}\}_{h,k>0}$ is equi-integrable over $\Omega_T$, where $1_{(0,T)\setminus(0,k]}$ is the indicator function of $(0,T)\setminus(0,k] $; that is, for any $\varepsilon > 0$, there exists $\delta > 0$ such that, for any measurable set $\mathscr{S} \subset \Omega_T$, if $|\mathscr{S}| < \delta$, then
$$
\|1_{(0,T)\setminus(0,k]}\widehat u_{h,k}\|_{L^1(\mathscr{S})} < \varepsilon \quad \text{for all } (h,k).
$$
Given a measurable set $\mathscr{S} \subset \Omega_T$, we define, for each $t \in (0,T)$, the spatial section
$$
\mathscr{S}(t) := \{ \x \in \Omega \,:\, (\x,t) \in \mathscr{S} \}.
$$
We then write
$$
\|\widehat u_{h,k}\|_{L^1(\mathscr{S})} = \int_0^{T} \int_{\mathscr{S}(t)} \widehat u_{h,k}\, {\rm d}\x\, {\rm d}t.
$$
Now define the time set
$$
\mathscr{T}_{h,k} := \left\{ t \in (k,T) : \|\widetilde v_{h,k}^{-\alpha}(t)\|_h^2 \le K \right\},
$$
where $K > 1$ is a constant to be chosen later, and split the integral as
\begin{equation}\label{lm12:lab1}
\int_k^{T} \widehat u_{h,k} \, {\rm d}\x \, {\rm d}t =
\int_{\mathscr{T}_{h,k}} \int_{\mathscr{S}(t)} \widehat u_{h,k} \,{\rm d}\x \, {\rm d}t
+\int_{\mathscr{T}_{h,k}^c} \int_{\mathscr{S}(t)} \widehat u_{h,k} \, {\rm d}\x \, {\rm d}t.
\end{equation}
From the bound 
$$
\int_{\mathscr{T}_{h,k}^c} \log \left\| \mathcal{I}_h [\widetilde v_{h,k}^{-2\alpha}\right] \|_h {\rm d}t \ge |\mathscr{T}_{h,k}^c| \log K,
$$
we deduce, on using the upper bound $C_5$ in \eqref{bnd:(inv_tilde_v_hk)^p in L1logL1}, that
$$
|\mathscr{T}_{h,k}^c| \le \frac{C_5}{\log K}.
$$
Thus, the contribution to the $L^1$-norm from $\mathscr{T}_{h,k}^c$  is controlled by
\begin{equation}\label{lm12:lab2}
\int_{\mathscr{T}_{h,k}^c} \int_{\mathscr{S}(t)} \widehat u_{h,k} {\rm d}\x\, {\rm d}t \le \|u_{0h}\|_{L^1(\Omega)} |\mathscr{T}_{h,k}^c|< \frac{\varepsilon}{2},
\end{equation}
by choice of $K> 1$ large enough such that 
$$
\frac{\|u_{0h}\|_{L^1(\Omega)}C_5}{\log K}<\frac{\varepsilon}{2}.
$$

Using the finite element structure, we have, for each $t \in \mathscr{T}_{h,k}$, that
\begin{align*}
\int_{\mathscr{S}(t)} \widehat u_{h,k}\, {\rm d}\x
= &\sum_{i \in I}  u_{h,k}(\a_i) \int_{\mathscr{S}(t)} \varphi_{\a_i} {\rm d}\x
= \sum_{i \in I}  \frac{ \widetilde v_{h,k}^\frac{\alpha}{2}(\a_i)}{\widetilde v_{h,k}^{\frac{\alpha}{2}}(\a_i)}  u_{h,k}(\a_i)\int_{\mathscr{S}(t)} \varphi_{\a_i}{\rm d}\x \\
\le& \left( \sum_{i \in I} \widetilde v_{h,k}^\alpha (\a_i)\widehat u_{h,k}^2(\a_i) \int_{\mathscr{S}(t)} \varphi_{\a_i}\,{\rm d}\x \right)^{\frac{1}{2}}
\\
&\times\left(\sum_{i \in I} \frac{1}{\widetilde v^{2\alpha}_{h,k}(\a_i)} \int_{\mathscr{S}(t)} \varphi_{\a_i}\,{\rm d}\x \right)^{\frac{1}{4}} \left(\sum_{i\in I}\int_{\mathscr{S}(t)}\varphi_{\a_i}\,{\rm d}\x\right)^\frac{1}{4}.
\end{align*}
This leads to:
$$
\int_{\mathscr{T}_{h,k}}\int_{\mathscr{S}(t)} \widehat u_{h,k} \, {\rm d}\x \, {\rm d}t
\le \int_{\mathscr{T}_{h,k}}  \| \widetilde v_{h,k}^\frac{\alpha}{2}  \widehat u_{h,k}\|_h
\|\widetilde v^{-\alpha}_{h,k}\|_h^{\frac{1}{2}} |\mathscr{S}(t)|^{\frac{1}{4}} {\rm d}t.
$$
In view of the definition of $\widehat\mathscr{T}_{h,k}$ and $\widetilde\mathscr{T}_{h,k}$, we have
$$
\int_{\mathscr{T}_{h,k}} \int_{\mathscr{S}(t)} \widehat  u_{h,k}\, {\rm d}\x\, {\rm d}t
\le K^{\frac{1}{4}} \int_0^{T}\| \widetilde v_{h,k}^\frac{\alpha}{2} \widehat  u_{h,k}\|_h  |\mathscr{S}(t)|^{\frac{1}{2}} {\rm d}t,
$$
which implies that 
$$
\int_{\mathscr{T}_{h,k}} \int_{\mathscr{S}(t)}  u_{h,k}\, {\rm d}\x\, {\rm d}t
\le K^{1/4} 
\left( \int_0^{T} \|\widetilde v_{h,k}^\frac{\alpha}{2} \widehat u_{h,k}^2\|^2_h\,{\rm d}t \right)^{\frac{1}{2}} \left(\int_0^{T} |\mathscr{S}(t)|  {\rm d}t \right)^{\frac{1}{4}} T^\frac{1}{4}.
$$
Thus, we conclude, from \eqref{bnd: (v_h^n)^alpha/2 u_n^n+1 in L2L2}, that
\begin{equation}\label{lm12:lab3}
\int_{\mathscr{T}_{h,k}} \int_{\mathscr{S}(t)}  u_{h,k}\, {\rm d}\x\, {\rm d}t
\le K^{\frac{1}{4}} T^{\frac{1}{4}} \left( \frac{C_2}{\tilde{C}} \right)^{\frac{1}{2}} |\mathscr{S}|^{\frac{1}{4}}<\frac{\varepsilon}{2}
\end{equation}
holds whenever $|\mathscr{S}| < \delta$  and $\delta > 0$ is chosen such that
$$
K^{\frac{1}{4}} T^{\frac{1}{4}} \left( \frac{C_2}{\tilde{C}} \right)^{\frac{1}{4}} \delta^{\frac{1}{4}} < \frac{\varepsilon}{2}.
$$
Compiling \eqref{lm12:lab2} and \eqref{lm12:lab3} into \eqref{lm12:lab1} yields 
$$
\| 1_{(0,T)\setminus(0,k]}  \widehat u_{h,k}\|_{L^1(\mathscr{S})}<\varepsilon.
$$

Finally, we verify \eqref{w_conv_L1L1:u_hk->u}. To carry this out, let $\varphi\in L^\infty(\Omega_T)$ and write
$$
\int_{\Omega_T} \widehat u_{h,k}\varphi\,{\rm d}\x\,{\rm d}t= \int_0^k \int_\Omega \widehat u_{h,k}\varphi\,{\rm d}\x\,{\rm d}t+\int_{\Omega_T}1_{(0,T)\setminus(0,k]} \widehat u_{h,k}\varphi\,{\rm d}\x\,{\rm d}t.
$$
We now know that there is $u\in L^1(\Omega_T)$ such that 
$$
\int_{\Omega_T}1_{(0,T)\setminus(0,k]} \widehat u_{h,k}\varphi\,{\rm d}\x\,{\rm d}t  \to \int_{\Omega_T} u\varphi\,{\rm d}\x\,{\rm d}t. 
$$
Furthermore, from \eqref{mass_conv:u^n_h}, we see that
$$
 \int_0^k \int_\Omega \widehat u_{h,k}\varphi\,{\rm d}\x\,{\rm d}t\le k \|\varphi\|_{L^\infty(\Omega_T)} \|u_{0h}\|_{L^1(\Omega)}\quad\mbox{ as }\quad h,k\to 0.  
$$ 
\end{proof}

The proof of the following corollary follows from the fact that $\|\Phi(\widetilde{v}_{h,k})\|_{L^\infty(\Omega)}\le\break \max_{x\in[0,\|v_{0h}\|_{L^\infty(\Omega)}\|]}\Phi(x)$ for all $(h,k)$. 
\begin{corollary}
The sequence $\{\Phi(\widetilde{v}_{h,k})\, \widehat u_{h,k}\}_{h,k>0}$ is equi-integrable on $\Omega_T$ and, as a result, satisfies
\begin{equation}\label{w_conv_L1L1:Ph(v_hk_hat)u_hk->z}
\Phi(\widetilde{v}_{h,k})\, \widehat u_{h,k} \rightharpoonup z \quad \text{in } L^1(\Omega_T)\quad\mbox{ as }\quad h,k\to 0,
\end{equation}
for some $z \in L^1(\Omega_T)$.
\end{corollary}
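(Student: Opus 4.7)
The plan is to deduce the corollary essentially for free from Lemma \ref{lm:equi-integrability_u_hk}, the only new ingredient being the uniform $L^\infty$-bound on $\Phi(\widetilde v_{h,k})$ pointed out by the authors. By \eqref{prop:v_0h} and \eqref{Positivity_and_DMP:v^n_h}, the pointwise values of $\widetilde v_{h,k}$ lie in the compact interval $[0,\|v_0\|_{L^\infty(\Omega)}]$ on which the continuous function $\Phi$ attains its maximum; setting $M:=\max_{s\in[0,\|v_0\|_{L^\infty(\Omega)}]}\Phi(s)<\infty$ and using $\widehat u_{h,k}\ge 0$ from \eqref{posit:u^n_h}, one immediately obtains the pointwise domination
\[
0 \;\le\; \Phi(\widetilde v_{h,k})\,\widehat u_{h,k} \;\le\; M\,\widehat u_{h,k}\quad\text{a.e.\ on }\Omega_T.
\]

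I would then replay the splitting already used in the proof of Lemma \ref{lm:equi-integrability_u_hk}: for any measurable $\mathscr{S}\subset\Omega_T$,
\[
\int_{\mathscr{S}} \Phi(\widetilde v_{h,k})\,\widehat u_{h,k}\,\dx\,{\rm d}t
\;\le\; M\int_{\mathscr{S}\cap (\Omega\times(0,k])} \widehat u_{h,k}\,\dx\,{\rm d}t
\;+\; M\int_{\mathscr{S}\cap (\Omega\times(k,T])} \widehat u_{h,k}\,\dx\,{\rm d}t.
\]
The first term is bounded by $Mk\|u_{0h}\|_{L^1(\Omega)}$ thanks to the mass conservation \eqref{mass_conv:u^n_h}, and so vanishes as $k\to 0$; the second inherits the equi-integrability of $\{1_{(0,T)\setminus(0,k]}\widehat u_{h,k}\}_{h,k>0}$ already established in the proof of Lemma \ref{lm:equi-integrability_u_hk}. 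Combined with the global bound $\|\Phi(\widetilde v_{h,k})\widehat u_{h,k}\|_{L^1(\Omega_T)} \le MT\|u_{0h}\|_{L^1(\Omega)}$, the Dunford--Pettis theorem on the finite-measure space $\Omega_T$ then furnishes a (non-relabeled) subsequence converging weakly in $L^1(\Omega_T)$ to some $z\in L^1(\Omega_T)$, which is precisely \eqref{w_conv_L1L1:Ph(v_hk_hat)u_hk->z}.

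I do not expect any real obstacle here: the delicate work, namely the discrete Moser--Trudinger inequality \eqref{ineq:M-T}, the $\mathscr{T}_{h,k}$ decomposition, and the use of the weighted estimate \eqref{bnd: (v_h^n)^alpha/2 u_n^n+1 in L2L2}, was carried out once and for all in Lemma \ref{lm:equi-integrability_u_hk}. The sole content of the corollary is that multiplication by the uniformly $L^\infty$-bounded factor $\Phi(\widetilde v_{h,k})$ preserves the Dunford--Pettis criterion, and the only mild care needed is to isolate the initial $(0,k]$ regime as before so that the decomposition $\widehat u_{h,k} = 1_{(0,k]}\widehat u_{h,k} + 1_{(0,T)\setminus(0,k]}\widehat u_{h,k}$ transfers cleanly to the new sequence.
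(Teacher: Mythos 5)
Your proposal is correct and follows essentially the same route as the paper: the paper's (one-line) proof likewise rests on the uniform bound $\|\Phi(\widetilde v_{h,k})\|_{L^\infty(\Omega)}\le \max_{s\in[0,\|v_{0h}\|_{L^\infty(\Omega)}]}\Phi(s)$ coming from the discrete maximum principle \eqref{Positivity_and_DMP:v^n_h} and continuity of $\Phi$, so that the equi-integrability established in Lemma \ref{lm:equi-integrability_u_hk} transfers to the product and Dunford--Pettis yields \eqref{w_conv_L1L1:Ph(v_hk_hat)u_hk->z}. Your extra care with the $(0,k]$ strip and the explicit domination $0\le\Phi(\widetilde v_{h,k})\widehat u_{h,k}\le M\,\widehat u_{h,k}$ simply spells out details the paper leaves implicit.
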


Next we establish the strong convergence of the sequence $\{\widehat v_{h,k}\}_{h,k}$ needed to pass to the limit in the term $(\nabla\mathcal{I}_h[\Phi(\widehat v_{h,k}) u_{h,k}], \nabla \bar u_h)$ as $h,k\to 0$.
\begin{lemma} We have 
\begin{equation}\label{s_conv: v_hk_hat in L2L2}
\widehat v_{h,k}\to v\quad\mbox{ in } L^2(0,T; L^2(\Omega))\quad\mbox{ as }\quad h,k\to 0.
\end{equation}
\end{lemma}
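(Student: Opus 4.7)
The plan is to apply an Aubin--Lions compactness argument to the piecewise linear interpolant $v_{h,k}$ and then transfer the strong convergence to $\widehat v_{h,k}$ by showing that $\widehat v_{h,k}-v_{h,k}\to 0$ in $L^2(0,T;L^2(\Omega))$.

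First, since $v_{h,k}(t)$ is a convex combination of $v^n_h$ and $v^{n+1}_h$ for $t\in[t_n,t_{n+1}]$, it inherits from \eqref{Positivity_and_DMP:v^n_h} and \eqref{bnd:v^n_h} uniform bounds in $L^\infty(\Omega_T)$ and in $L^2(0,T;H^1(\Omega))$. The key step is a dual estimate for the time derivative. Setting $Y:=\{\varphi\in H^2(\Omega):\partial_\n\varphi=0 \text{ on }\partial\Omega\}$, I would test $\eqref{Alg}_2$ with $\bar v_h=\mathcal{Q}_h\varphi$ for $\varphi\in Y$, and use the defining relation \eqref{def:Q_h} to rewrite $(\delta_t v^{n+1}_h,\mathcal{Q}_h\varphi)_h=(\delta_t v^{n+1}_h,\varphi)$, so that
\[
\langle\partial_t v_{h,k}(t),\varphi\rangle=-(\nabla\widehat v_{h,k}(t),\nabla\mathcal{Q}_h\varphi)-(\widehat v_{h,k}(t)\,\widehat u_{h,k}(t),\mathcal{Q}_h\varphi)_h.
\]
Using the $H^1$- and $L^\infty$-stabilities \eqref{stab_Qh:dual_H1}, \eqref{sta_Lp:Q_h} of $\mathcal{Q}_h$ together with the 2D embedding $H^2(\Omega)\hookrightarrow L^\infty(\Omega)$, the right-hand side is bounded by $C(\|\nabla\widehat v_{h,k}(t)\|_{L^2}+\|\widehat v_{h,k}(t)\|_{L^\infty}\|\widehat u_{h,k}(t)\|_{L^1})\|\varphi\|_{H^2}$. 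Squaring, integrating in time, and invoking \eqref{Positivity_and_DMP:v^n_h}, \eqref{mass_conv:u^n_h}, and \eqref{bnd:v^n_h} then yields $\|\partial_t v_{h,k}\|_{L^2(0,T;Y')}\le C$.

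With this time-derivative estimate and the compact-continuous chain $H^1(\Omega)\hookrightarrow\hookrightarrow L^2(\Omega)\hookrightarrow Y'$, the Aubin--Lions lemma extracts a subsequence along which $v_{h,k}\to v$ strongly in $L^2(0,T;L^2(\Omega))$. The energy identity underlying \eqref{bnd:v^n_h} also gives $\sum_n\|v^{n+1}_h-v^n_h\|_h^2\le C$, from which a direct computation provides
\[
\|\widehat v_{h,k}-v_{h,k}\|^2_{L^2(0,T;L^2(\Omega))}\le Ck\sum_{n=0}^{N-1}\|v^{n+1}_h-v^n_h\|_h^2\le Ck\to 0,
\]
transferring the strong convergence to $\widehat v_{h,k}$; the limit is identified with the $v$ in \eqref{w_conv_L2H1:vh->v} by uniqueness of weak limits. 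The main obstacle I anticipate is the low regularity of the reactive product $v^{n+1}_h u^{n+1}_h$, which is controlled only in $L^\infty(0,T;L^1(\Omega))$ because $u$ is only bounded in $L^1$; this forces the test space for the time-derivative estimate to embed into $L^\infty$, motivating the choice $Y\subset H^2(\Omega)$ and making the simultaneous $H^1$- and $L^\infty$-stability of $\mathcal{Q}_h$ essential to the argument.
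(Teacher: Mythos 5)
Your argument takes a genuinely different route from the paper's. The paper never forms a time-derivative bound in a dual space: it proves a fractional time-translate estimate $k\sum_{m}\|v^{m+r+1}_h-v^{m+1}_h\|^2_{L^2(\Omega)}\le C (rk)^{1/2}$ directly for the piecewise-constant interpolant (by summing $\eqref{Alg}_2$ over a block of time steps, testing with the increment, and using \eqref{bnd:v^n_h}, \eqref{Positivity_and_DMP:v^n_h}, \eqref{mass_conv:u^n_h}), and then invokes Simon's compactness theorem together with \eqref{bnd:v_hk_hat in L2H1}. Your Aubin--Lions variant is otherwise largely sound: the use of \eqref{def:Q_h} to turn the lumped term $(\delta_t v^{n+1}_h,\mathcal{Q}_h\varphi)_h$ into the $L^2$ pairing, the stabilities \eqref{sta_Lp:Q_h} and \eqref{stab_Qh:dual_H1}, the embedding $H^2(\Omega)\hookrightarrow L^\infty(\Omega)$ to absorb the product $\widehat u_{h,k}\widehat v_{h,k}$ which is only in $L^\infty(0,T;L^1(\Omega))$, and the transfer step via the telescoped energy identity $\sum_n\|v^{n+1}_h-v^n_h\|^2_h\le \|v_{0h}\|_h^2$ are all correct.

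There is, however, one genuine gap: the claimed uniform bound of the piecewise-linear interpolant $v_{h,k}$ in $L^2(0,T;H^1(\Omega))$ is not available on the first slab $[0,t_1)$. There $v_{h,k}$ interpolates between $v^0_h=\mathcal{Q}_h v_0$ and $v^1_h$, and $\|\nabla v_{0h}\|_{L^2(\Omega)}$ is not controlled uniformly in $h$, because $v_0$ is assumed only in $L^\infty(\Omega)$; the estimate \eqref{bnd:v^n_h} bounds $k\sum_{n=0}^{N-1}\|\nabla v^{n+1}_h\|^2_{L^2(\Omega)}$, i.e.\ only the gradients of $v^n_h$ for $n\ge1$. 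The best generic bound is the inverse estimate $\|\nabla v_{0h}\|_{L^2(\Omega)}\lesssim h^{-1}$, so the first-slab contribution is of order $k h^{-2}$, which is bounded only under a CFL-type restriction or the extra hypothesis $v_0\in H^1(\Omega)$ --- precisely the kind of assumption the paper is at pains to avoid (see its remark on avoiding $u_0\in H^1(\Omega)$ or a CFL condition). The defect is localized and repairable: for instance, run your Aubin--Lions argument on $(\delta,T)$ for each fixed $\delta>0$ (once $k<\delta$ the interpolant there involves only $v^n_h$ with $n\ge1$), and use the uniform $L^\infty$ bound \eqref{Positivity_and_DMP:v^n_h} to make the contribution of $\Omega\times(0,\delta)$ arbitrarily small before letting $\delta\to0$; alternatively, apply the compactness to a suitably shifted interpolant. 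As written, though, the compactness hypothesis of Aubin--Lions is not justified, whereas the paper's translate-based argument never touches $\nabla v_{0h}$.
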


\begin{proof} We aim to prove that for each $0<\delta < T$, the following inequality holds:
$$
\int_0^{T-\delta} \|\widehat v_{h,k}(t+\delta)- \widehat v_{h,k}(t)\|_{L^2(\Omega)}\,{\rm d}t \le C\, \delta^{\frac{1}{2}},
$$
for $C=C(u_0,v_0, T)$ being independent of $(h,k)$.  Since $\widetilde v_{h,k}$ is piecewise constant in time, it suffices to consider values of $\delta$ that are integer multiples of the time step $k$, that is, $\delta = r\,k$ with $r = 1, \dots, N-1$. Thus, it is enough to prove \cite{GG_GS_2008} that
$$
k \sum_{m=0}^{N-r-1} \|v^{m+r+1}_h - v^{m+1}_h\|^2_{L^2(\Omega)} \le C\,(r\,k)^{1/2},\quad \forall\, r: 0 \leq r \leq N-1.
$$

To derive this estimate, we begin by multiplying  equation $\eqref{Alg}_2$ by $k$ and summing over $n = m+1, \dots, m+r$. This yields
$$
( v_h^{m+r+1} - v_h^{m+1}, \bar v_h )_h + k \sum_{n=m+1}^{m+r} (\nabla v^{n+1}_h, \nabla \bar v_h) + k \sum_{n=m+1}^{m+r} (u^{n+1}_h v^{n+1}_h, \bar v_h)_h = 0.
$$
Choosing the test function $\bar v_h = v^{m+r+1}_h - v^{m+1}_h$, we obtain
$$
\|v^{m+r+1}_h - v^{m+1}_h\|^2_h = -k \sum_{n=m+1}^{m+r} (\nabla v^{n+1}_h, \nabla(v^{m+r}_h - v^m_h)) - k \sum_{n=m+1}^{m+r} (u^{n+1}_h v^{n+1}_h, v^{m+r}_h - v^m_h)_h.
$$

Multiplying both sides by $k$ and summing over $m = 0, \dots, N - r-1$ gives
$$
\begin{aligned}
k\sum_{m=0}^{N-r-1} \|v^{m+r+1}_h - v^{m+1}_h\|^2_h &= -k^2 \sum_{m=0}^{N-r-1} \sum_{n=m+1}^{m+r} (\nabla v^{n+1}_h, \nabla(v^{m+r+1}_h - v^{m+1}_h)) \\
&\quad - k^2 \sum_{m=0}^{N-r-1} \sum_{n=m}^{m+r-1} (u^{n+1}_h v^{n+1}_h, v^{m+r+1}_h - v^{m+1}_h)_h.
\end{aligned}
$$

We now estimate the two terms on the right-hand side. A discrete version of Fubini's theorem allows us to switch the order of summation:
$$
\begin{aligned}
-k^2 \sum_{m=0}^{N-r-1} \sum_{n=m+1}^{m+r} (\nabla v^{n+1}_h, \nabla(v^{m+r+1}_h - v^{m+1}_h))
&= -k^2 \sum_{n=0}^{N-2} \sum_{m=\overline{n-r+1}}^{\bar n} (\nabla v^{n+1}_h, \nabla(v^{m+r+1}_h - v^{m+1}_h)) \\
&\le k \sum_{n=0}^{N-2} \|\nabla v^{n+1}_h\|_{L^2(\Omega)} \\
&\quad\times\left(k \sum_{m=\overline{n-r+1}}^{\bar n} \|\nabla(v^{m+r+1}_h - v^{m+1}_h)\|^2_{L^2(\Omega)}\right)^{\frac{1}{2}} \\
&\quad \times \left( k \sum_{m=\overline{n-r+1}}^{\bar n} 1 \right)^{\frac{1}{2}} \\
&\le 2 T^{1/2} (r k)^{\frac{1}{2}} \sum_{n=0}^{N-1} k \|\nabla v^{n+1}_h\|^2_{L^2(\Omega)} \\
&\le 2 T^{1/2} C_2 (r k)^{\frac{1}{2}},
\end{aligned}
$$
where the index cutoff is defined by
$$
\bar n = \begin{cases}
0 & \text{if } n < 0, \\
n & \text{if } 0 \le n \le N - r-1, \\
N - r-1 & \text{if } n > N - r-1,
\end{cases}
$$
ensuring $|\bar n - \overline{n - r + 1}| \le r$. In this last inequality we used \eqref{bnd:v^n_h}.  

Similarly, using estimates such as \eqref{Positivity_and_DMP:v^n_h} and \eqref{mass_conv:u^n_h}, the second term is bounded by
$$
\begin{aligned}
-k^2 \sum_{m=0}^{N-r-1} \sum_{n=m+1}^{m+r+1} (u^{n+1}_h v^{n+1}_h, v^{m+r+1}_h - v^{m+1}_h)_h &= -k^2 \sum_{n=0}^{N-2} \sum_{m=\overline{n-r+1}}^{\bar n} (u^{n+1}_h v^{n+1}_h, v^{m+r}_h - v^m_h)_h \\
&\le 2 \|v_{0h}\|^2_{L^\infty(\Omega)}\, k \sum_{n=0}^{N-1} \|u^{n+1}_h\|_{L^1(\Omega)}\, r k \\
&\le 2 T^\frac{3}{2} \|v_{0h}\|^2_{L^\infty(\Omega)} \|u_{0h}\|_{L^1(\Omega)} (r k)^{\frac{1}{2}}.
\end{aligned}
$$

Combining these estimates yield, on noting \eqref{equiv:L2_and_L2h}, that
$$
k \sum_{m=0}^{N-r-1} \|v^{m+r+1}_h - v^{m+1}_h\|^2_{L^2(\Omega)} \le C (r k)^{\frac{1}{2}}.
$$

The strong convergence \eqref{s_conv: v_hk_hat in L2L2} follows from Simon’s compactness theorem \cite{Simon_1987} and boundedness \eqref{bnd:v_hk_hat in L2H1} entails \eqref{s_conv: v_hk_hat in L2L2}.

\end{proof}
\begin{corollary} There holds
\begin{equation}\label{s_conv: v_hk_tilde in L2L2}
\widetilde v_{h,k}\to v\quad\mbox{ in } L^2(\Omega_T)\quad\mbox{ as }\quad h,k\to  0.
\end{equation}
\end{corollary}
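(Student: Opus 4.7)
The plan is to reduce the convergence of $\widetilde v_{h,k}$ to that of $\widehat v_{h,k}$, which is already settled by \eqref{s_conv: v_hk_hat in L2L2}, by means of a simple time-shift estimate. Concretely, the triangle inequality
$$
\|\widetilde v_{h,k}-v\|_{L^2(\Omega_T)}\le \|\widetilde v_{h,k}-\widehat v_{h,k}\|_{L^2(\Omega_T)}+\|\widehat v_{h,k}-v\|_{L^2(\Omega_T)}
$$
shows that only the first summand remains to be controlled, the second tending to zero by the preceding lemma.

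For that first summand I would exploit the fact that on each subinterval $[t_n,t_{n+1})$ the definitions \eqref{def:w_hk_tilde} and \eqref{def:w_hk_hat} give $\widetilde v_{h,k}-\widehat v_{h,k}=v_h^{n}-v_h^{n+1}$, which is a time-independent element of $X_h$. Hence
$$
\|\widetilde v_{h,k}-\widehat v_{h,k}\|_{L^2(\Omega_T)}^2=k\sum_{n=0}^{N-1}\|v_h^{n+1}-v_h^{n}\|_{L^2(\Omega)}^2,
$$
and the norm equivalence \eqref{equiv:L2_and_L2h} on $X_h$ lets me replace the continuous $L^2(\Omega)$-norm by the lumped norm $\|\cdot\|_h$. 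Since $\|v_h^{n+1}-v_h^{n}\|_h^2=k^2\|\delta_t v_h^{n+1}\|_h^2$, the a priori bound \eqref{bnd:v^n_h} (or, equivalently, the summed energy identity underlying its proof, which actually furnishes $\sum_{n=0}^{N-1}\|v_h^{n+1}-v_h^{n}\|_h^2\le \|v_{0h}\|_h^2$) yields a uniform bound on $\sum_n\|v_h^{n+1}-v_h^n\|_h^2$ depending only on $v_0$, and consequently
$$
\|\widetilde v_{h,k}-\widehat v_{h,k}\|_{L^2(\Omega_T)}^2\le C_{\mathrm{eq}}^2\,C\,k\longrightarrow 0\quad\text{as }k\to 0.
$$

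There is no genuine obstacle here: the corollary is a clean packaging of the standard first-order-in-time jump estimate from the implicit Euler scheme together with the strong convergence of $\widehat v_{h,k}$ already established. The only bookkeeping required is to distinguish the two piecewise-constant reconstructions $\widetilde v_{h,k}$ and $\widehat v_{h,k}$ and to harvest, from the energy identity behind \eqref{bnd:v^n_h}, the $\|\cdot\|_h$-bound on the one-step jumps $v_h^{n+1}-v_h^{n}$, after which the proof closes by combining with the preceding lemma via the triangle inequality.
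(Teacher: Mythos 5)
Your argument is correct and follows essentially the same route as the paper: the triangle inequality against $\widehat v_{h,k}$, the identity $\|\widetilde v_{h,k}-\widehat v_{h,k}\|_{L^2(\Omega_T)}^2=k\sum_{n}\|v_h^{n+1}-v_h^n\|_{L^2(\Omega)}^2$, the norm equivalence \eqref{equiv:L2_and_L2h}, and the jump bound from the energy estimate behind \eqref{bnd:v^n_h}, yielding an $O(k)$ quantity. Your explicit appeal to the telescoped identity (which gives $\sum_n\|v_h^{n+1}-v_h^n\|_h^2$ bounded independently of $k$, not merely the $k$-weighted sum stated in \eqref{bnd:v^n_h}) is precisely the step the paper uses implicitly to gain the factor $k$, so the proposal is if anything slightly more careful on that point.
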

\begin{proof}
On using \eqref{bnd:v^n_h}, there results 
$$
\begin{array}{rcl}
\displaystyle
\int_0^T \|\widehat v_{h,k}-\widetilde v_{h,k}\|^2_{L^2(\Omega)}\, {\rm d}t&\le&\displaystyle
C_{\rm sta}\int_0^T \|\widetilde v_{h,k}-\widehat v_{h,k}\|^2_h\, {\rm d}t
\\	
&=&\displaystyle
C_{\rm sta}  k \sum_{n=0}^{N-1} k^2 \|\delta_t v^{n+1}_h\|^2_h
\\
&\le& k C_{\rm sta} C_2\to 0\quad\mbox{ as } h,k\to 0.
\end{array}
$$
Thus, \eqref{s_conv: v_hk_hat in L2L2} is clearly satisfied.     
\end{proof}

In the following lemma we identify the limit function $z=\Phi(v) u$ in \eqref{w_conv_L1L1:Ph(v_hk_hat)u_hk->z}.

\begin{lemma}
It follows that  
\begin{equation}\label{w_conv_L1L1:Ph(v_hk_hat)u_hk->Phi(v)u}
\mathcal{I}_h[\Phi(\widetilde v_{h,k}) \widehat u_{h,k}] \rightharpoonup \Phi(v) u \quad\mbox{ in } L^1(\Omega_T)\quad\mbox{ as }\quad h,k\to0.
\end{equation}
\end{lemma}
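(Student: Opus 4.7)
The plan is to combine the weak $L^1$ convergence in \eqref{w_conv_L1L1:Ph(v_hk_hat)u_hk->z}, whose limit $z$ has not yet been identified, with a separate argument removing the nodal interpolation operator $\mathcal{I}_h$.

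\textbf{Step 1 (Identification $z=\Phi(v)u$).} The strong convergence \eqref{s_conv: v_hk_tilde in L2L2} yields, along a (not relabelled) subsequence, $\widetilde v_{h,k}\to v$ a.e.\ on $\Omega_T$. The discrete maximum principle \eqref{Positivity_and_DMP:v^n_h} forces $\widetilde v_{h,k}$ to take values in the compact interval $[0,\|v_{0h}\|_{L^\infty(\Omega)}]$ on which $\Phi$ is continuous, so pointwise continuity together with dominated convergence upgrade the a.e.\ limit to strong convergence $\Phi(\widetilde v_{h,k})\to\Phi(v)$ in every $L^p(\Omega_T)$, $p\in[1,\infty)$. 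Combined with the weak $L^1$ convergence and equi-integrability of $\widehat u_{h,k}$ from Lemma~\ref{lm:equi-integrability_u_hk}, a standard strong–weak product argument (truncate $\widehat u_{h,k}$ at level $R$, handle the tail by equi-integrability and the truncated part by $L^1$ strong convergence of $\Phi(\widetilde v_{h,k})-\Phi(v)$) yields $\Phi(\widetilde v_{h,k})\widehat u_{h,k}\rightharpoonup\Phi(v)u$ in $L^1(\Omega_T)$. Uniqueness of the weak limit in \eqref{w_conv_L1L1:Ph(v_hk_hat)u_hk->z} then gives $z=\Phi(v)u$.

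\textbf{Step 2 (Passing through $\mathcal{I}_h$).} Setting $\Phi_h:=\mathcal{I}_h[\Phi(\widetilde v_{h,k})]\in X_h$, the nodal identity $\mathcal{I}_h[\Phi(\widetilde v_{h,k})\widehat u_{h,k}]=\mathcal{I}_h[\Phi_h\widehat u_{h,k}]$ holds since the two sides agree at every mesh node. The remaining task is $\mathcal{I}_h[\Phi_h\widehat u_{h,k}]-\Phi(\widetilde v_{h,k})\widehat u_{h,k}\rightharpoonup 0$ in $L^1(\Omega_T)$, which I test against $\varphi\in C^\infty(\bar\Omega_T)$ after splitting
$$\bigl(\mathcal{I}_h[\Phi_h\widehat u_{h,k}]-\Phi_h\widehat u_{h,k}\bigr)+\bigl(\Phi_h-\Phi(\widetilde v_{h,k})\bigr)\widehat u_{h,k}.$$
The first summand is controlled on each $\sigma$ via the commutator estimate \eqref{com_err_L1->L1-W1inf:I_h} together with the inverse bound $\|\nabla\Phi_h\|_{L^\infty(\sigma)}\le Ch^{-1}\operatorname{osc}_\sigma(\Phi_h)$, which gives the simplex-wise bound $C\operatorname{osc}_\sigma(\Phi_h)\|\widehat u_{h,k}\|_{L^1(\sigma)}$; the second summand admits the same bound directly from the definition of $\mathcal{I}_h$. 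Uniform continuity of $\Phi$ on $[0,\|v_{0h}\|_{L^\infty(\Omega)}]$ provides, for any $\delta>0$, some $\eta(\delta)>0$ with $\operatorname{osc}_\sigma(\Phi_h)\le\delta$ whenever $\operatorname{osc}_\sigma(\widetilde v_{h,k})\le\eta(\delta)$. Splitting simplices accordingly, the ``good'' portion contributes at most $C\delta T\|u_{0h}\|_{L^1(\Omega)}\|\varphi\|_{L^\infty}$, while the ``bad'' portion has measure tending to zero (by \eqref{bnd:v_hk_hat in L2H1}, Chebyshev's inequality, and the Poincaré-type bound $\operatorname{osc}_\sigma(\widetilde v_{h,k})\le Ch\|\nabla\widetilde v_{h,k}\|_{L^\infty(\sigma)}$ on elements where it is relevant) so its contribution is absorbed by the equi-integrability of $\widehat u_{h,k}$. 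Sending $\delta\to 0$ closes the argument for smooth $\varphi$, and density together with equi-integrability extend it to all $\varphi\in L^\infty(\Omega_T)$.

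\textbf{Main obstacle.} Step~2 is the delicate part. Because $\Phi$ is merely continuous, its modulus of continuity has no quantitative Hölder or Lipschitz rate, and $\nabla\widetilde v_{h,k}$ is only controlled in $L^2$, not pointwise. Closing the estimate requires decoupling the \emph{size} of oscillations (from uniform continuity of $\Phi$) from the \emph{measure} of the set on which $\widetilde v_{h,k}$ varies too much (controlled via \eqref{bnd:v_hk_hat in L2H1}), and then absorbing the ``bad-set'' contribution using the equi-integrability of $\widehat u_{h,k}$ already obtained in Lemma~\ref{lm:equi-integrability_u_hk}.
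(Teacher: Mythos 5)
Your proposal is correct in substance but takes a genuinely different route from the paper in the key step of removing the interpolant. The paper mollifies the motility function, $\Phi_\lambda=\rho_\lambda\star\Phi$, precisely so as to have the constants $c_{\Phi_\lambda'}$ and $c_{\Phi_\lambda''}$ available, and then splits $\mathcal{I}_h[\Phi(\widetilde v_{h,k})\widehat u_{h,k}]-\Phi(\widetilde v_{h,k})\widehat u_{h,k}$ into four terms $R_1,\dots,R_4$, estimated through \eqref{err_Linf-W2inf:I_h}, \eqref{err_L1-W21:I_h}, \eqref{com_err_L1->L1-W1inf:I_h}, Chebyshev splittings on $\{|\nabla\widetilde v_{h,k}|>M_1\}$ and $\{\widehat u_{h,k}>M_2\}$, and the equi-integrability of $\widehat u_{h,k}$, obtaining strong $L^1(\Omega_{T^*})$ smallness of the commutator error. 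You dispense with mollification: the uniform continuity of $\Phi$ on $[0,\|v_{0h}\|_{L^\infty(\Omega)}]$ together with the elementary oscillation--gradient relation for piecewise linear functions plays the role of $c_{\Phi_\lambda'},c_{\Phi_\lambda''}$, while the Chebyshev/equi-integrability structure is the same (your bad set $\{\operatorname{osc}_\sigma\widetilde v_{h,k}>\eta\}$ has space-time measure of order $C_2h^2/\eta^2$ by \eqref{bnd:v^n_h}). This is arguably more elementary, and it delivers weak convergence of the commutator error, which is all the lemma requires, whereas the paper's argument gives the stronger $L^1$ control. Your identification step (truncation of $\widehat u_{h,k}$ at level $R$ plus strong convergence of $\Phi(\widetilde v_{h,k})$) is a self-contained replacement for the lemma of Zhigun--Surulescu--Uatay that the paper cites.

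Two details to tighten. First, the second summand $(\Phi_h-\Phi(\widetilde v_{h,k}))\widehat u_{h,k}$ is bounded on $\sigma$ by the oscillation of $\Phi$ over the interval $[\min_\sigma\widetilde v_{h,k},\max_\sigma\widetilde v_{h,k}]$ times $\|\widehat u_{h,k}\|_{L^1(\sigma)}$, not by $\operatorname{osc}_\sigma(\Phi_h)\|\widehat u_{h,k}\|_{L^1(\sigma)}$: for non-monotone $\Phi$ the value $\Phi(\widetilde v_{h,k}(\x))$ need not lie between the nodal values of $\Phi_h$. Your uniform-continuity step covers this bound just as well ($\operatorname{osc}_\sigma\widetilde v_{h,k}\le\eta(\delta)$ implies both quantities are at most $\delta$), but it should be stated that way. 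Second, on the initial slab $\Omega\times(0,k]$ one has $\widetilde v_{h,k}=v_{0h}$, for which no gradient bound is available and which is excluded from the equi-integrability statement of Lemma \ref{lm:equi-integrability_u_hk}; treat that slab separately, noting from \eqref{mass_conv:u^n_h} that its contribution against a bounded test function is of order $k$ — this is exactly the reason the paper introduces $\Omega_{T^*}$.
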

\begin{proof}
We first infer from \eqref{s_conv: v_hk_tilde in L2L2} that 
$$
\widetilde v_{h,k}(\x,t)\to v(\x, t)\quad\mbox{a. e. on }\Omega_T. 
$$
As a result, by continuity of $\Phi$, we also have
$$
\Phi(\widetilde v_{h,k}(\x,t))\to \Phi(v(\x, t))\quad\mbox{a. e. on }\Omega_T. 
$$
Then, one can prove \cite[Lm A.1]{Zhigun_Surulescu_Uatay_2016}, on nothing \eqref{w_conv_L1L1:u_hk->u} and \eqref{w_conv_L1L1:Ph(v_hk_hat)u_hk->z}, that 
$$
\Phi(\widetilde v_{h,k}) \widetilde u_{h,k} \rightharpoonup \Phi(v) u \quad\mbox{ in } L^1(\Omega_T)\quad\mbox{ as }\quad h,k\to 0.
$$ 
Our next step to completing the proof of \eqref{w_conv_L1L1:Ph(v_hk_hat)u_hk->Phi(v)u} is to obtain  
\begin{equation}\label{lm16:lab1}
\mathcal{I}_h[\Phi(\widetilde v_{h,k})\widehat u_{h,k}]- \Phi(\widetilde v_{h,k})\widehat u_{h,k}  \to 0 \quad\mbox{ in } L^1(\Omega_{T})\quad\mbox{ as }\quad h,k\to0.
\end{equation}
Let $\Omega_{T^*} = \Omega \times (k, T)$ and consider a family of mollifiers $\{\rho_\lambda\}_{\lambda > 0}$. Define the mollified function
$$
\Phi_\lambda = \rho_\lambda \star \Phi,
$$
that is, the convolution of $\Phi$ with the mollifier $\rho_\lambda$. For each $n \in \mathds {N}$, denote by $\Phi^{(n)}_\lambda$ the $n$-th derivative of $\Phi_\lambda$, and define the constant
$$
c_{\Phi^{(n)}_\lambda} = \max_{x \in [0, \|v_{0h}\|_{L^\infty(\Omega)}]} \Phi^{(n)}_\lambda(x).
$$
It is clear that $c_{\Phi^{(n)}_\lambda}$ depends badly on $\lambda$ for $n\ge1$. Now write:
\begin{equation}\label{lm16:decomposition}
\begin{array}{rcl}
\mathcal{I}_h[\Phi(\widetilde v_{h,k})\widehat u_{u,k}]- \Phi(\widetilde v_{h,k})\widehat u_{h,k} &=& \mathcal{I}_h[\Phi(\widetilde v_{h,k}) \widehat u_{h,k})] \pm\mathcal{I}_h[\Phi_\lambda(\widetilde v_{h,k}) \widehat u_{h,k}] 
\\
&& \pm \mathcal{I}_h[\Phi_\lambda(\widetilde v_{h,k})]  \widehat u_{h,k}
\\
&&\pm\Phi_\lambda(\widetilde v_{h,k}) \widehat u_{h,k} - \Phi(\widetilde v_{h,k}) \widehat u_{h,k}
\\
&:=&\displaystyle
\sum_{i=1}^4 R_i.
\end{array}
\end{equation}
The right-hand side is estimated as follows:

$\bullet$ For $R_1$, we know \cite[Pro. 4.21]{Brezis_2011} that, for each $\varepsilon_1 >0$, there exists $\lambda_0>0$, independent of $(h,k)$, such that, for all $\lambda<\lambda_0$, it follows from \eqref{bnd_Linf:v_hk_tilde}  that   
$$
\|\Phi_\lambda (\widetilde v_{h,k}) - \Phi(\widetilde  v_{h,k})\|_{L^\infty(\Omega_{T^*})}\le \varepsilon_1.  
$$
Thus, by \eqref{mass_conv:u^n_h},  
\begin{equation}\label{lm16:R1->0}
\begin{array}{rcl}
\displaystyle
\|R_1\|_{L^1(\Omega_{T^*})}&=&\displaystyle k\sum_{n=1}^N 
\sum_{i\in I} |[\Phi(\widetilde v_{h,k}) - \Phi_\lambda(\widetilde v_{h,k})]|(\a_i) \widehat u_{h,k}(\a_i) \|\varphi_{\a_i}\|_{L^1(\Omega)}
\\
&\le&\displaystyle
\varepsilon_1 k\sum_{n=1}^N  \|\widehat u_{h,k}\|_{L^1(\Omega)}\le T \varepsilon_1 \|u_{0h}\|_{L^1(\Omega)}.
\end{array}
\end{equation}

$\bullet$ For $R_2$, in the basis of Chebyshev's inequality, we define, as $\nabla \widehat v_{h,k}$ is space-time piecewise continuous, for $M_1>0$, 
$$
\mathscr{A}=\{(\sigma,\tau) \in \Sigma_h \times\mathcal{T}^*_k: |\nabla\widetilde v_{h,k}|_{\tau\times\sigma} |>M_1\}
$$
and $\mathscr{A}^c=\Omega_{T^*}\backslash  \mathscr{A}$, where $\mathcal{T}^*_h=\mathcal{T}_h\backslash[0,k]$. Then, we have, by \eqref{bnd:v^n_h}, that 
$$
|\mathscr{A}|\le \frac{\|\nabla\widetilde v_{h,k}\|_{L^2(\Omega_{T^*})}^2}{M^2_1}\le \frac{C_2}{M^2_1}  .
$$
We express
$$
\|R_2\|_{L^1(\Omega_{T^*})}=\sum_{(\sigma,\tau)\in \mathscr{A} } \|R_2\|_{L^1(\sigma\times\tau)}+\sum_{(\sigma,\tau)\in \mathscr{A}^c} \|R_2\|_{L^1(\sigma\times\tau)}.
$$
For each $\tau\in\mathcal{T}_h$, we define 
$$
\mathscr{A}(\tau) := \{ \sigma \in \Sigma_h \,:\, (\sigma,\tau) \in \mathscr{A} \},
$$
and, analogously, we define $\mathscr{A}^c(\tau)$.  Next,  we invoke  \eqref{err_Linf-W2inf:I_h} and \eqref{inv_ineq_Wnp->Wmq} to get  
$$
\begin{array}{rcl}
\displaystyle
\sum_{(\sigma,\tau)\in \mathscr{A}} \|R_2\|_{L^1(\sigma\times\tau)}&=&\displaystyle
\sum_{\tau\in\mathcal{T}_k^*}\sum_{\sigma\in \mathscr{A}(\tau) } \|R_2\|_{L^1(\sigma\times\tau)} 
\\
&=&\displaystyle
k \sum_{\tau\in\mathcal{T}_k^*}\sum_{\sigma\in \mathscr{A}(\tau)} \|\mathcal{I}_h[\Phi_\lambda(\widetilde v_{h,k}) \widehat u_{h,k}] - \mathcal{I}_h[\Phi_\lambda(\widetilde v_{h,k})] \widehat u_{h,k}\|_{L^1(\sigma)}
\\
&\le&\displaystyle
k \sum_{\tau\in\mathcal{T}_k^*}\sum_{\sigma\in \mathscr{A}(\tau)} |\sigma| \|\mathcal{I}_h[\mathcal{I}_h[\Phi_\lambda(\widetilde v_{h,k})]\widehat u_{h,k}] - \mathcal{I}_h[\Phi_\lambda(\widetilde v_{h,k})] \widehat u_{h,k}\|_{L^\infty(\sigma)}
\\
&\le& \displaystyle
C_{\rm err} k \sum_{\tau\in\mathcal{T}_k}\sum_{\sigma\in \mathscr{A}(\tau)} h^2 |\sigma| \|\nabla^2 (\mathcal{I}_h[\Phi_\lambda(\widetilde v_{h,k})] \widehat u_{h,k})\|_{L^\infty(\sigma)}
\\
&\le & \displaystyle
C_{\rm err} k \sum_{\tau\in\mathcal{T}_k^*}\sum_{\sigma\in \mathscr{A}(\tau)} h^2 |\sigma| \|\nabla\mathcal{I}_h[\Phi_\lambda(\widetilde v_{h,k})]\|_{L^\infty(\sigma)}  \|\nabla\widehat u_{h,k}\|_{L^\infty(\sigma)}
\\
&\le & \displaystyle
C_{\rm err} k \sum_{\tau\in\mathcal{T}_k^*}\sum_{\sigma\in \mathscr{A}(\tau)} h^2  \|\nabla\Phi_\lambda(\widetilde v_{h,k})\|_{L^\infty(\sigma)}  \|\nabla \widehat u_{h,k}\|_{L^1(\sigma)}
\\
&\le & \displaystyle
C_{\rm inv}C_{\rm err}  k \sum_{\tau\in\mathcal{T}_k^*}\sum_{\sigma\in \mathscr{A}(\tau)}  h c_{\Phi_\lambda'} \|\nabla \widetilde v_{h,k}\|_{L^\infty(\sigma)} \|\widehat u_{h,k}\|_{L^1(\sigma)} 
\\
&\le & C^2_{\rm inv} C_{\rm err} c_{\Phi_\lambda'} \|v_{0h}\|_{L^\infty(\Omega)}  \|\widehat u_{h,k}\|_{L^1(\mathscr{A})}.
\end{array}
$$
Similarly, we have
$$
\begin{array}{rcl}
\displaystyle
\sum_{\sigma\times\tau\in \mathscr{A}^c} \|R_2\|_{L^1(\sigma\times\tau)}&\le&\displaystyle 
C_{\rm inv}C_{\rm err}  k \sum_{\tau\in\mathcal{T}_k}\sum_{\sigma\in \mathscr{A}^c(\tau)}  h c_{\Phi_\lambda'} \|\nabla v_{h,k}\|_{L^\infty(\sigma)} \|\widehat u_{h,k}\|_{L^1(\sigma)}
\\
&\le&\displaystyle 
C_{\rm inv} C_{\rm err} h c_{\Phi_\lambda'} M_1   k \sum_{\tau\in\mathcal{T}_k}\sum_{\sigma\in A^c(\tau)}  \|\widehat u_{h,k}\|_{L^1(\sigma)}
\\
\mbox{by \eqref{mass_conv:u^n_h}}&\le&\displaystyle 
C_{\rm inv}C_{\rm err} T  c_{\Phi_\lambda'} M_1  \|u_{0h}\|_{L^1(\Omega)} h.
\end{array}
$$
Consequently, 
\begin{equation}\label{lm16:lab2}
\begin{array}{rcl}
\|R_2\|_{L^1(\Omega_{T^*})}&\le& C^2_{\rm inv} C_{\rm err} c_{\Phi_\lambda'}  \|v_{0h}\|_{L^\infty(\Omega)} \|\widehat u_{h,k}\|_{L^1(\mathscr{A})} 
\\
&&+C_{\rm inv}C_{\rm err} c_{\Phi'_\lambda} \|u_{0h}\|_{L^1(\Omega)} T   M_1  h.
\end{array}
\end{equation}
By Lemma \ref{lm:equi-integrability_u_hk}, we have that $\{\widehat u_{h,k}\}_{h,k>0}$ is equi-integrable. That is, for each $\varepsilon_2>0$, there exists $\delta_2>0$ such that for any measurable set  $\mathscr{S}\subset\Omega_T$ with $|\mathscr{S}|<\delta_2$, it follows that 
$$
\|u_{h,k}\|_{L^1(\mathscr{S})}<\varepsilon_2\quad\mbox{ for all }\quad h,k>0.
$$
This way, by choosing $M_1$ sufficiently large  so that  $|\mathscr{A}|<\delta_2$,  the first term on the right-hand side of \eqref{lm16:lab2} is bounded as follows:
 \begin{equation}\label{lm16:R2->0}
 \begin{array}{rcl}
\|R_2\|_{L^1(\Omega_{T^*})}&\le& C^2_{\rm inv} C_{\rm err} c_{\Phi_\lambda'}  \|v_{0h}\|_{L^\infty(\Omega)} \varepsilon_2
\\
&&+ C_{\rm inv}C_{\rm err} c_{\Phi'_\lambda} \|u_{0h}\|_{L^1(\Omega)} T   M_1  h.
\end{array}
 \end{equation}

$\bullet$ For $R_3$, again, by Chebyshev's inequality, we define, for $M_2>0$,
$$
\mathscr{B}=\{(\x,t)\in\Omega_{T^*} :\, |\widehat u_{h,k}|>M_2\}
$$
and hence
$$
|\mathscr{B}|\le\frac{\|u_{0h}\|_{L^1(\Omega)}}{M_2}.
$$
Then we decompose 
$$
\|R_3\|_{L^1(\Omega_{T^*})}=\|R_3\|_{L^1(\mathscr{B})} + \|R_3\|_{L^1(\mathscr{B}^c)}.
$$
Thus, we have that 
$$
\begin{array}{rcl}
\displaystyle
\|R_3\|_{L^1(\mathscr{B})} &=&\displaystyle
\int_{\mathscr{B}} |(\mathcal{I}_h[\Phi_\lambda(\widetilde v_{h,k})] -\Phi_\lambda(\widetilde v_{h,k} )) \widehat u_{h,k}| \,{\rm d}\x\,{\rm d}t 
\\
&\le&\displaystyle
2 c_{\Phi_\lambda} \int_{\mathscr{B}} \widehat u_{h,k} \,{\rm d}\x\,{\rm d}t 
\\
&\le&2 c_{\Phi_\lambda}  \|\widehat u_{h,k}\|_{L^1(\mathscr{B})}.
\end{array}
$$
and, further, for each $\sigma\in\Sigma_h$,  by \eqref{err_L1-W21:I_h}, that 
$$
\begin{array}{rcl}
\displaystyle
\|R_3\|_{L^1(\mathscr{B}^c)}&=&\displaystyle
\int_{\mathscr{B}^c } |(\mathcal{I}_h[\Phi_\lambda(\widehat v_{h,k})] -\Phi_\lambda(\widehat v_{h,k}) ) \widehat u_{h,k}| \,{\rm d}\x\,{\rm d}t 
\\
&\le&\displaystyle
 M_2 \int_{\mathscr{B}^c} |\mathcal{I}_h[\Phi_\lambda(\widetilde v_{h,k})] -\Phi_\lambda(\widetilde v_{h,k})| \,{\rm d}\x\,{\rm d}t 
\\
&\le&\displaystyle
 M_2 \|\mathcal{I}_h[\Phi_\lambda(\widetilde v_{h,k})] -\Phi_\lambda(\widetilde v_{h,k})\|_{L^1(\Omega_{T^*})}
\\
 &= &\displaystyle
 M_2 k \sum_{n=1}^N \sum_{\sigma\in\Sigma_h} \|\mathcal{I}_h[\Phi_\lambda(v^n_h)] -\Phi_\lambda(v^n_h)\|_{L^1(\sigma)}
\\
&\le&\displaystyle
C_{\rm err}M_2 k \sum_{n=1}^N  \sum_{\sigma\in \Sigma_h} h^2 \|\nabla^2\Phi_\lambda(v^n_h)\|_{L^1(\sigma)}
\\
&\le&\displaystyle
C_{\rm err}M_2 c_{\Phi_\lambda''} k \sum_{n=1}^N\sum_{\sigma\in\Sigma_h } h^2  \|\nabla v^n_h\|^2_{L^2(\sigma)}
\\
&\le&C_{\rm err}M_2 c_{\Phi_\lambda''} h^2 \|\nabla \widehat v_{h,k}\|^2_{L^2(\Omega_{T})}.
\end{array}
$$
Lemma \ref{lm:equi-integrability_u_hk} shows that, for each $\varepsilon_3>0$, there exists $\delta_2>0$ such that for any measurable set  $\mathscr{S}\subset\Omega_T$ with $|\mathscr{S}|<\delta_2$, it follows that 
$$
\|\widehat u_{h,k}\|_{L^1(\mathscr{S})}<\varepsilon_3\quad\mbox{ for all }\quad h,k>0.
$$
As a result, if $M_2$ is taken sufficiently large so that  $|\mathscr{B}|<\delta_2$, we estimate the first term on the right-hand side as follows:
\begin{equation}\label{lm16:R3->0}
\|R_3\|_{L^1(\Omega_{T^*})}\le 2 c_{\Phi_\lambda} \varepsilon_3
+  C_{\rm err}M_2 c_{\Phi_\lambda''} h^2 \|\nabla\widehat v_{h,k}\|^2_{L^2(\Omega_{T})}.
\end{equation}

$\bullet$ For $R_4$, we follow the same approach as with $R_1$,  obtaining 
\begin{equation}\label{lm16:R4->0}
\|R_4\|_{L^2(\Omega_{T^*})}\le \varepsilon_1 T \|u_{0h}\|_{L^1(\Omega)}. 
\end{equation}

Finally, compiling \eqref{lm16:R1->0}-\eqref{lm16:R4->0} into \eqref{lm16:decomposition} yields, on noting \eqref{bnd:v^n_h}, that 
$$
\begin{array}{rcl}
\|\mathcal{I}_h[\Phi(\widehat v_{h,k})u_{u,k}]- \Phi(\widehat v_{h,k})u_{h,k}\|_{L^1(\Omega_T^*)}&\le& 2 \varepsilon_1 T \|u_{0h}\|_{L^1(\Omega)}
\\
&&+C^2_{\rm inv} C_{\rm err} c_{\Phi_\lambda'}  \|v_{0h}\|_{L^\infty(\Omega)} \varepsilon_2
\\
&&+ C_{\rm inv}C_{\rm err} c_{\Phi'_\lambda} \|u_{0h}\|_{L^1(\Omega)} T   M_1  h
\\
&&+2 c_{\Phi_\lambda} \varepsilon_3
\\
&&+  C_{\rm err}M_2 c_{\Phi_\lambda''} C_2 h^2.
\end{array}
$$
Let $\varepsilon>0$. Take $\varepsilon_1=\frac{\varepsilon}{8 T \|u_{0h}\|_{L^1(\Omega)}}$, $\varepsilon_2=\frac{\varepsilon}{4 C^2_{\rm inv} C_{\rm err} c_{\Phi_\lambda'}  \|v_{0h}\|_{L^\infty(\Omega)}}$  and $\varepsilon_3=\frac{\varepsilon}{8c_{\Phi}}$. Further, let $h$ be such that 
$$
C_{\rm inv}C_{\rm err} c_{\Phi'_\lambda} \|u_{0h}\|_{L^1(\Omega)} T   M_1  h+C_{\rm err}M_2 c_{\Phi_\lambda''} h^2 C_2\le \frac{\varepsilon}{4}.
$$
These adjustments ensure that, for each $\varepsilon>0$, there exists $h_0>0$ such that, for all $(h,k)$  with $h<h_0$, the following holds:  
$$
\|\mathcal{I}_h[\Phi(\widetilde v_{h,k})\widehat u_{u,k}]- \Phi(\widetilde v_{h,k})\widehat u_{h,k}\|_{L^1(\Omega_{T^*})}<\varepsilon, 
$$
which lead to 
$$
\mathcal{I}_h[\Phi(\widetilde v_{h,k})\widehat u_{h,k}]- \Phi(\widetilde v_{h,k})\widehat u_{h,k}(\x,t)\to 0\quad\mbox{ a. e. on } \Omega_T, 
$$
since for any $(\x,t)\in\Omega_T$, there exists $k>0$ sufficiently small such that $(\x,t)\in\Omega_{T^*}$. Then the proof of \eqref{lm16:lab1} results from applying the dominated convergence theorem, thereby resulting in \eqref{w_conv_L1L1:Ph(v_hk_hat)u_hk->Phi(v)u}. 
\end{proof}
\begin{remark} We introduced the auxiliary set $\Omega_{T^*}$ in order to avoid the assumption $u_0\in H^1(\Omega)$ or using a CFL condition.
\end{remark}

\section{Passage to the limit as $h,k\to 0$ }\label{sec:limit}
To pass to the limit as $h, k\to 0$, assume that $\varphi\in W^{1,1}(0,T; W^{2,\infty}_N(\Omega))\subset C^0([0,T]; W^{2,\infty}_N(\Omega))$, with $\varphi(T)=0$, where $W^{2,\infty}(\Omega)=\{\varphi\in W^{2,\infty}(\Omega)\,:\, \partial_\n\varphi=0 \mbox{ on } \partial\Omega \}$ and define $\widetilde\mathcal{P}_h\varphi\in W^{1,1}(0,T; X_h^{\int=0})$ such that, for each $t\in[0,T]$, 
$$
(\nabla\widetilde \mathcal{P}_h\varphi(t), \nabla \chi_h)=(\nabla \varphi(t), \nabla \chi_h)\quad\mbox{ for all }\quad\chi_h\in X_h^{\int=0}.
$$
It is straightforward to verify that the projection operator $\widetilde \mathcal{P}_h$ satisfies the stability estimate
$$
\|\nabla\widetilde\mathcal{P}_h\varphi\|_{L^2(\Omega)}\le \|\nabla\varphi\|_{L^2(\Omega)},
$$
and, by Poincaré-Wirtinger inequality,
\begin{equation}\label{stab_H1:Ph}
\|\widetilde\mathcal{P}_h\varphi\|_{H^1(\Omega)}\le C_{\rm sta} \|\varphi\|_{H^1(\Omega)},
\end{equation}
where $C_{\rm sta}=\max\{1, C_{\rm PW}\}$, with $C_{\rm PW}>0$ is the Poincaré-Wirtinger constant. 

From the regularity for $\varphi$, we infer that 
\begin{equation}\label{conv_W11W1inf:P_hphi->phi}
\widetilde\mathcal{P}_h\varphi\to \varphi\quad\mbox{ in } W^{1,1}(0,T; H^1(\Omega)) 
\end{equation}
holds as $h\to 0$ from the interpolation error \cite{Scott_Zhang_1990}, for $p\in[1,\infty]$, 
\begin{equation}\label{err_W1p->W2p:SZh}
\|\varphi - \mathcal{SZ}_h \varphi \|_{W^{1,p}(\Omega)}\le C_{\rm err} h \|\varphi\|_{W^{2,p}(\Omega)}. 
\end{equation}
As a result, $\{\nabla \widetilde\mathcal{P}_h \partial_t\varphi \}_{h>0}$ is equi-integrable in $L^2(\Omega_T)$, that is, that is, for any $\widetilde \varepsilon > 0$, there exists $\delta > 0$ such that, for any measurable set $\mathscr{S} \subset \Omega_T$, if $|\mathscr{S}| < \delta$, then
$$
\|\nabla\mathcal{P}_h\partial_t\varphi\|_{L^2(\mathscr{S})} < \widetilde\varepsilon \quad \text{for all } h.
$$

Applying definitions \eqref{def:w_hk_tilde}–\eqref{def:w_hk} to \eqref{Alg}, after using $\bar v_h=\mathcal{P}_h\varphi$,  allows us to rewrite it as: 
$$
\int_0^T (\partial_t u_{h,k},  \widetilde\mathcal{P}_h \varphi_h )_h\, {\rm d}t +\int_0^T (\nabla\mathcal{I}_h[\Phi(\widetilde v_{h,k}) \widehat u_{h,k}], \nabla\widetilde\mathcal{P}_h(t)) \,{\rm d}t=0.
$$
If one defines $-\Delta_h: X_h\to X_h$ such that 
$$
-(\Delta_h \bar \chi_h, \chi_h)=(\nabla\bar\chi_h, \nabla\chi_h)\quad\mbox{ for all }\chi_h\in X_h,
$$
it follows that
$$
-(\Delta_h \widetilde\mathcal{P}_h\varphi, \bar \chi_h)=(\nabla\varphi, \nabla\chi_h)=-(\Delta\varphi, \chi_h).
$$
Thus, integration by parts shows
\begin{equation}\label{sec6:lab6}
- \int_0^T (u_{h,k},   \partial_t \widetilde\mathcal{P}_h \varphi)_h\, {\rm d}t -\int_0^T (\mathcal{I}_h[\Phi(\widetilde v_{h,k}) \widehat u_{h,k}], \Delta\varphi) \,{\rm d}t=(u_{0h}, \widetilde\mathcal{P}_h\varphi(0))_h.
\end{equation}
Next, we will take the limit in \eqref{sec6:lab5} as $h, k\to 0$. Write
$$
\begin{array}{rcl}
\displaystyle
\int_0^T (u_{h,k},  \partial_t \widetilde\mathcal{P}_h \varphi_h)_h\, {\rm d}t&=&\displaystyle
\int_0^T (u_{h,k}-\widehat u_{h,k},\partial_t \widetilde\mathcal{P}_h \varphi_h)_h\,{\rm d}t
\\
&&\displaystyle 
+\int_0^T (u_{h,k} \partial_t \widetilde\mathcal{P}_h \varphi_h -\mathcal{I}_h[u_{h,k} \partial_t \widetilde\mathcal{P}_h \varphi_h], 1)\,{\rm d}t
\\
&&\displaystyle
+\int_0^T (\widehat u_{h,k}, \partial_t\widetilde\mathcal{P}_h\varphi_h)\, {\rm d}t.
\end{array}
$$

We start by proving 
\begin{equation}\label{sec6:lab0}
\int_0^T (u_{h,k}-\widehat u_{h,k},\partial_t \widetilde\mathcal{P}_h \varphi_h)\,{\rm d}t\to 0\quad{ as }\quad h,k\to 0.
\end{equation}
Indeed, for \eqref{sec6:lab0} to be proved, we proceed as follows:   
$$
\begin{array}{rcl}
\displaystyle
\int_0^T (u_{h,k}-\widehat u_{h,k}, \partial_t\widetilde\mathcal{P}_h\varphi)_h {\rm d}t&=&\displaystyle
\sum_{n=0}^{N-1} \int_{t_n}^{t_{n+1}} \frac{t-t_n}{k} (u^{n+1}_h - u^n_h, \partial_t\widetilde\mathcal{P}_h\varphi)_h\, {\rm d}t 
\\
\mbox{from \eqref{eq:psi_h}} &=&\displaystyle
 \sum_{n=0}^{N-1} \int_{t_n}^{t_{n+1}} \frac{t-t_n}{k}\Big[(\nabla(\psi^{n+1}_h-\psi^n_h), \nabla\partial_t\widetilde\mathcal{P}_h\varphi)
 \\
 &&\qquad+ (\psi^{n+1}_h-\psi^n_h, \partial_t\widetilde\mathcal{P}_h\varphi)_h\Big]\,{\rm d}t
\\
&\le&\displaystyle
 \sum_{n=0}^{N-1} \int_{t_n}^{t_{n+1}} \|\delta_t\psi^{n+1}_h\|_{H^1_h(\Omega)} \|\partial_t\widetilde\mathcal{P}_h\varphi\|_{H^1_h(\Omega)}\,{\rm d}t
\\
\mbox{ by \eqref{equiv:L2_and_L2h}}&\le &\displaystyle
C_{\rm sta}k^{\frac{1}{2}} \left(\sum_{n=0}^{N-1} \|\delta_t\psi^{n+1}_h\|^2_{H^1_h(\Omega)}\right)^\frac{1}{2} \|\partial_t\widetilde\mathcal{P}_h\varphi\|_{L^2(0,T; H^1(\Omega))}
\\
\mbox{ by \eqref{bnd:Phi(v^n_h)u^(n+1)_h in L2L2h}}&\le &\displaystyle
C_{\rm sta} C_1^\frac{1}{2} \|\partial_t\widetilde\mathcal{P}_h\varphi|\|_{L^2(0,T; H^1(\Omega))} k^{\frac{1}{2}}\to 0
\end{array}
$$
as $h,k\to 0$. 

Let 
$$
\mathscr{C}=\{(t,\sigma)\in[0,T]\times \Sigma_h\,:\, |\nabla\widetilde\mathcal{P}_h\partial_t\varphi|_\sigma| >\lambda\}.
$$
Then, by Chevyshev's inequality, we have, for $\lambda>0$, that
$$
|\mathscr{C}|\le \frac{\|\nabla\widetilde\mathcal{P}_h\partial_t\varphi\|^2_{L^2(\Omega)}}{\lambda^2}\le \frac{\|\nabla\partial_t\varphi\|^2_{L^2(\Omega)}}{\lambda^2}.
$$
Let $\lambda >0 $ be sufficiently large such that $ |\mathscr{C}|<\delta$, so that
\begin{equation}\label{sec6:lab1}
 \|\nabla\widetilde\mathcal{P}_h\partial_t\varphi\|_{L^2(\mathscr{C})}<\widetilde\varepsilon.
\end{equation}
Thus, on noting \eqref{com_err_L1->L1-W1inf:I_h}, it follows that
$$
\begin{array}{rcl}
\displaystyle
\int_0^T (u_{h,k} \partial_t \widetilde\mathcal{P}_h \varphi_h(t) -\mathcal{I}_h[u_{h,k} \partial_t \widetilde\mathcal{P}_h \varphi_h], 1)\,{\rm d}t&=&\displaystyle \int_0 ^T \sum_{\sigma\in\Sigma_h} \|u_{h,k}\partial_t \widetilde\mathcal{P}_h \varphi_h -\mathcal{I}_h[u_{h,k} \partial_t \widetilde\mathcal{P}_h \varphi_h]\|_{L^1(\sigma)}\, {\rm d}t
\\
 &\le&\displaystyle
C_{\rm err}  h \int_0^T \sum_{\sigma\in\Sigma_h} \|u_{h,k}\|_{L^1(\sigma)}  \|\nabla\widetilde\mathcal{P}_h \partial_t\varphi\|_{L^\infty(\sigma)}  {\rm d}t 
\\
&\le&\displaystyle
C_{\rm err}  h \int_0^T \sum_{\sigma\in\mathscr{C}(t)} \|u_{h,k}\|_{L^1(\sigma)}  \|\nabla\widetilde\mathcal{P}_h \partial_t\varphi\|_{L^\infty(\sigma)}  {\rm d}t 
\\
&&+\displaystyle
C_{\rm err}  h \int_0^T \sum_{\sigma\in\mathscr{C}^c(t)} \|u_{h,k}\|_{L^1(\sigma)}  \|\nabla\widetilde\mathcal{P}_h \partial_t\varphi\|_{L^\infty(\sigma)}  {\rm d}t 
\\
&:=& I_1+I_2,
\end{array}
$$
where
$$
\mathscr{C}(t)=\{\sigma\in\Sigma_h : (t,\sigma)\in\mathscr{C}\},
$$
with $\mathscr{C}^c(t)$ being its complementary. Then, we bound:
\begin{equation}\label{sec6:lab2}
\begin{array}{rcl}
I_1 &\le&\displaystyle
C_{\rm err}  \int_0^T \left(\sum_{\sigma\in\mathscr{C}(t)} \|u_{h,k}\|^2_{L^1(\sigma)} \right)^\frac{1}{2} \left(\sum_{\sigma\in\mathscr{C}(t)} h^2\|\nabla\widetilde\mathcal{P}_h \partial_t\varphi\|^2_{L^\infty(\sigma)}\right)^\frac{1}{2}  {\rm d}t 
\\
\mbox{by \eqref{mass_conv:u^n_h}}&\le&\displaystyle
C_{\rm err} \|u_{0h}\|_{L^1(\Omega)}  \int_0^T\left(\sum_{\sigma\in\Sigma_h} h^2\|\nabla\widetilde\mathcal{P}_h \partial_t\varphi\|^2_{L^\infty(\sigma)}\right)^\frac{1}{2}  {\rm d}t 
\\
&\le&\displaystyle
C_{\rm err} \|u_{0h}\|_{L^1(\Omega)}  \int_0^T\sum_{\sigma\in\Sigma_h} h^2\|\nabla\widetilde\mathcal{P}_h \partial_t\varphi\|^2_{L^\infty(\sigma)}  {\rm d}t 
\\
&=&\displaystyle
C_{\rm err} \|u_{0h}\|_{L^1(\Omega)} T^{\frac{1}{2}}  \left(\int_0^T \sum_{\sigma\in\mathscr{C}(t)} h^2 \|\nabla\widetilde\mathcal{P}_h \partial_t\varphi\|^2_{L^\infty(\sigma)}\right)^\frac{1}{2}
\\
\mbox{ by \eqref{inv_ineq_Wnp->Wmq}} &\le&\displaystyle
C_{\rm err} C_{\rm inv } \|u_{0h}\|_{L^1(\Omega)} T^{\frac{1}{2}} \|\nabla\widetilde\mathcal{P}_h \partial_t\varphi\|_{L^2(\mathscr{C})}
\\
\mbox{by \eqref{sec6:lab1}}&\le&C_{\rm err} C_{\rm inv} \|u_{0h}\|_{L^1(\Omega)} T^{\frac{1}{2}} \widetilde\varepsilon
\end{array}
\end{equation}
and
\begin{equation}\label{sec6:lab3}
\begin{array}{rcl}
I_2&= &\displaystyle
C_{\rm err} h  \int_0^T \sum_{\sigma\in\mathscr{C}^c(t)} \|u_{h,k}\|_{L^1(\sigma)}   \|\nabla\widetilde\mathcal{P}_h \partial_t\varphi\|_{L^\infty(\sigma)}
\\
&\le&\displaystyle
C_{\rm err} h \lambda T  \|u_{0h}\|_{L^1(\Omega)}.
\end{array}
\end{equation}

For any $ \varepsilon > 0 $, if 
$$\tilde\varepsilon=\frac{\varepsilon}{2 C_{\rm err} C_{\rm inv} \|u_{0h}\|_{L^1(\Omega)} T^{\frac{1}{2}}}$$ 
in \eqref{sec6:lab2} and
$ h$ is chosen sufficiently small in \eqref{sec6:lab3}  such that
$$
C_{\rm err} h \lambda \|u_{0h}\|_{L^1(\Omega)} \le \frac{\varepsilon}{2},
$$
then it follows that
$$
\left|\int_0^T (u_{h,k} \partial_t \widetilde\mathcal{P}_h \varphi_h(t) -\mathcal{I}_h[u_{h,k} \partial_t \widetilde\mathcal{P}_h \varphi_h], 1)\,{\rm d}t\right| \le \varepsilon.
$$
Hence, we conclude that $$ \int_0^T (u_{h,k} \partial_t \widetilde\mathcal{P}_h \varphi_h(t) -\mathcal{I}_h[u_{h,k} \partial_t \widetilde\mathcal{P}_h \varphi_h], 1)\,{\rm d}t \to 0\quad\mbox{as}\quad  h,k\to 0.$$

Because of \eqref{conv_W11W1inf:P_hphi->phi} and \eqref{w_conv_L1L1:u_hk->u}, it leads to
$$
\int_0^T (\widehat u_{h,k}, \partial_t\widetilde\mathcal{P}_h\varphi_h)\, {\rm d}t\to \int_0^T (u, \partial_t\varphi)\, {\rm d}t,
$$
since 
\begin{equation}\label{sec6:lab4}
\begin{array}{rcl}
\|\partial_t\varphi-\widetilde\mathcal{P}_h\partial_t\varphi\|_{L^\infty(\Omega)}&\le& \|\partial_t\varphi-\mathcal{SZ}_h\partial_t\varphi\|_{L^\infty(\Omega)}+\|\mathcal{SZ}_h\partial_t\varphi-\widetilde\mathcal{P}_h\partial_t\varphi\|_{L^\infty(\Omega)} 
\\
\mbox{from \eqref{err_W1p->W2p:SZh} for $p=\infty$ and \eqref{inv_ineq_Wnp->Wmq} }&\le& C_{\rm err }  h \|\partial_t\varphi\|_{W^{1,\infty}(\Omega)} + C_{\rm inv} h^{-\frac{2}{p}} \|\mathcal{SZ}_h\partial_t\varphi-\widetilde\mathcal{P}_h\varphi\|_{L^p(\Omega)}   
\\
&\le& C_{\rm err }  h \|\partial_t\varphi\|_{W^{1,\infty}(\Omega)} + C_{\rm inv} h^{-\frac{2}{p}} \|\mathcal{SZ}_h\partial_t\varphi-\widetilde\mathcal{P}_h\partial_t\varphi\|_{H^1(\Omega)}
\\
&\le&
2 C_{\rm err }  h \|\varphi\|_{W^{1,\infty}(\Omega)} + C_{\rm inv} h^{-\frac{2}{p}} \|\partial_t\varphi-\widetilde\mathcal{P}_h\partial_t\varphi\|_{H^1(\Omega)}
\\
\mbox{ from \eqref{err_W1p->W2p:SZh} for $p>2$}&\le&
2 C_{\rm err }  h \|\partial_t\varphi\|_{W^{1,\infty}(\Omega)} + C_{\rm inv} h^{1-\frac{2}{p}} \|\partial_t\varphi\|_{H^2(\Omega)},
\end{array}
\end{equation}
whereupon, as $h,k\to 0$,   
$$
\int_0^T (u_{h,k},  \partial_t \widetilde\mathcal{P}_h \varphi_h(t))_h\, {\rm d}t\to \int_0^T (u, \partial_t\varphi)\, {\rm d}t. 
$$

Invoking \eqref{conv_W11W1inf:P_hphi->phi} along with \eqref{w_conv_L1L1:Ph(v_hk_hat)u_hk->Phi(v)u}, we find  that
$$
-\int_0^T (\mathcal{I}_h[\Phi(\widetilde v_{h,k}) \widehat u_{h,k}], \Delta\varphi ) \,{\rm d}t
\to -\int_0^T (\Phi(v)u, \Delta\varphi) \,{\rm d}t\quad{ as }\quad h,k\to 0.
$$ 
The convergence of the initial term is guaranteed by \eqref{def:Q_h} and \eqref{sec6:lab4}:
$$
(u_{0h}, \widetilde\mathcal{P}_h\varphi(0))_h=(u_0, \widetilde\mathcal{P}_h\varphi(0))\to (u_0, \varphi(0))\quad\mbox{ as } h\to 0. 
$$

Let now $\varphi\in W^{1,1}(0,T; H^1(\Omega))$ with $\varphi(T)=0$. Analogously, we have, on making use of \eqref{def:w_hk} and \eqref{def:w_hk_hat} that equation $\eqref{Alg}_2$ can be recast,  after performing  integration by part in time,  as  
\begin{equation}\label{sec6:lab5}
-\int_0^T (v_{h,k},  \widetilde\mathcal{P}_h\partial_t\varphi)_h\, {\rm d}t +\int_0^T (\nabla \widehat v_{h,k}, \nabla \widetilde\mathcal{P}_h\varphi)_h \,{\rm d}t+\int_0^T (\widehat u_{h,k} \widehat v_{h,k}, \widetilde\mathcal{P}_h\varphi)_h\, {\rm d}t=(u_{0h}, \widetilde\mathcal{P}_h\varphi(0)). 
\end{equation}
It is readily seen that the convergence of \eqref{sec6:lab5} as $h,k\to 0$ follows pretty much the same arguments used for \eqref{sec6:lab6}.

This latter completes the proof of Theorem \ref{th:main}.
 
\bibliographystyle{alpha}
\bibliography{sample}
\end{document}